\numberwithin{equation}{section}
\newenvironment{abs}{\textbf{Abstract}\mbox{  }}{ }
\newenvironment{key words}{\emph{\texttt{Keywords}}\mbox{  }}{ }
\newtheorem{theorem}{Theorem}[section]
\newtheorem{lemma}[theorem]{Lemma}
\newtheorem{corollary}[theorem]{Corollary}
\newtheorem{proposition}[theorem]{Proposition}
\renewenvironment{proof}{\noindent{\textbf{Proof.}}}{\hfill$\Box$}
\newenvironment{altproof}[1]
{\addvspace{0.3cm}\noindent
{\textbf{Proof of Theorem {#1}}.}}
{\nopagebreak\mbox{}\hfill $\Box$\par\addvspace{0.5cm}}
\theoremstyle{remark}
\newtheorem{remark}[theorem]{Remark}
\theoremstyle{plain}
    \newcommand{\rmnum}[1]{\romannumeral #1}
    \newcommand{\Rmnum}[1]{\expandafter\@slowromancap\romannumeral #1@}
\newcommand{\wt}{\widetilde}
\newcommand{\wh}{\widehat}
\newcommand{\ov}{\overline}
   \def\dist{\mathrm{dist}}
   \def\span{\mathrm{span}}
   \def\R{\mathbb{R}}
\newcommand{\cO}{{\mathcal O}}
\newcommand{\cT}{{\mathcal T}}
\newcommand{\vphi}{\varphi}
\newcommand{\al}{\alpha}
\newcommand{\be}{\beta}
\newcommand{\de}{\delta}
\newcommand{\ze}{\zeta}
\newcommand{\la}{\lambda}
\newcommand{\si}{\sigma}
\newcommand{\De}{\Delta}
\newcommand{\La}{\Lambda}
\newcommand{\Om}{\Omega}
\newcommand{\eps}{\varepsilon}
\newcommand{\weakto}{\rightharpoonup}
\newcommand{\pa}{\partial}
\def\Ker{\mathrm{Ker}}
\begin{document}

\title{\textbf{Nodal bubble tower solutions to slightly subcritical elliptic problems with Hardy terms}}
\author{Thomas Bartsch\thanks{Mathematisches Institut, Justus-Liebig-Universit$\ddot{a}$t Giessen, Arndtstr. 2, 35392 Giessen, Germany; E-mail: thomas.bartsch@math.uni-giessen.de}, Qianqiao Guo\thanks{School of Mathematics and Statistics, Northwestern Polytechnical University, 710129 Xi'an, China; E-mail: gqianqiao@nwpu.edu.cn}
}

\date{}
\maketitle

\begin{center}
  {\large\it Dedicated to the 85th Birthday of Professor Dajun Guo.}
\end{center}
\vspace{.2cm}

% ----------------------------------------------------------------
\noindent
\begin{abs}
We study the possible blow-up behavior of solutions to the slightly subcritical elliptic problem with Hardy term
\[
\left\{
\begin{aligned}
-\De u-\mu\frac{u}{|x|^2} &= |u|^{2^{\ast}-2-\eps}u &&\quad \text{in } \Om, \\\
u &= 0&&\quad \text{on } \pa\Om,
\end{aligned}
\right.
\]
in a bounded domain $\Om\subset\R^N (N\ge7)$ with $0\in\Om$, as $\mu,\epsilon\to 0^+$. In \cite{BarGuo-ANS}, we obtained the existence of nodal solutions that blow up positively at the origin and negatively at a different point as $\mu=O(\epsilon^\alpha)$ with $\alpha>\frac{N-4}{N-2}$, $\epsilon\to 0^+$. Here we prove the existence of nodal bubble tower solutions, i.e.\ superpositions of bubbles of different signs, all blowing up at the origin but with different blow-up order, as $\mu=O(\epsilon)$, $\epsilon\to0^+$.
\end{abs}

\vspace{.2cm}
\noindent
{\small\bf 2010 Mathematics Subject Classification}~ 35B44, 35B33, 35J60.

\vspace{.2cm}
\noindent
{\small\bf Key words}
~Hardy term; Critical exponent; Slightly subcritical problems; Nodal solutions; Bubble towers; Singular perturbation methods.

%---------------------------------------------------------------------------------
\section{\textbf{Introduction}\label{Section 1}}
We continue to study the possible blow-up behavior of solutions to the slightly subcritical elliptic problem with Hardy term
\begin{equation}\label{pro}
\left\{
\begin{aligned}
-\De u-\mu\frac{u}{|x|^2} &= |u|^{2^{\ast}-2-\eps}u &&\quad \text{in }\Om, \\\
u &= 0 &&\quad \text{on }\pa \Om,
\end{aligned}
\right.
\end{equation}
where $\Om\subset\R^N$, $N\ge7$, is a smooth bounded domain with $0\in\Om$; $2^*:=\frac{2N}{N-2}$ is the critical Sobolev exponent. 
In \cite{BarGuo-ANS}, for fixed $\alpha>\frac{N-4}{N-2}$ and $\mu_0>0$, we obtain the existence of nodal solutions that blow up positively at the origin and negatively at a different point with $\mu=\mu_0 \epsilon^\alpha, \epsilon\to 0^+$. In this paper we want to study the existence of nodal bubble tower solutions, i.e. superpositions of bubbles of different signs, all blowing up at the origin but with different blow-up order. For that, we need to assume $\alpha=1$, that is, $\mu=\mu_0 \epsilon$.

The blow-up phenomenon for positive and for nodal solutions to problem \eqref{pro} for $\mu=0$ has been studied extensively, see e.g. \cite{BaLiRey95CVPDE,BarMiPis06CVPDE, BarDPis-prepr1,BarDPis-prepr2,BrePe89PNDEA,PiDolMusso03JDE,FluWeip7MM,GroTaka10JFA,Han91AIHPAN, MussoPis10JMPA,PisWe07AIHPA,Rey89MM,Rey90JFA,Rey91DIE} and the references therein. However, it is well known that if $\mu\neq0$, the Hardy potential $\frac{1}{|x|^2}$ cannot be regarded as a lower order perturbation since it has the same homogeneity as the Laplace operator. This makes the analysis interesting and more complicated compared with the case $\mu=0$. The existence of positive and nodal solutions to the problem with Hardy type potentials and critical exponents has been studied in a number of papers, see e.g. \cite{CaoH04JDE,CaoP03JDE,EkeGhou02,FG,GR1,GR2,GY,GuoNiu08JDE,Jan99,RuWill03,Smets05TAMS,Terr96} and the references therein. However, few results are known concerning blow-up solutions. The only results we are aware of are related to the problem
\begin{equation*}
\left\{
\begin{aligned}
&-\De u-\frac{\mu}{|x|^2}u = k(x)u^{2^*-1},\\
&u\in D^{1,2}(\R^N),\ \ u>0 \text{ in } \R^N\setminus\{0\};
\end{aligned}
\right.
\end{equation*}
here $D^{1,2}(\R^N):=\{u\in L^{2^*}(\R^N): |\nabla u|\in L^2(\R^N)\}$; see \cite{FelliPis06CPDE,FelliTer05CCM,MussoWei12IMRN}.

%In \cite{BarGuo-prepr1, BarGuo-prepr2}, the authors obtain the existence of multi-bubble nodal solutions that blow up at different points as $\eps\to0^+$ and $\mu=\mu_0\eps^\al$ with $\mu_0>0$ and $\al>\frac{N-4}{N-2}$. In this paper we consider the case $\al=1$. We can see that the bubble tower solutions, which are superpositons of bubbles with different signs, all blowing up at the origin, 
%exist for any smooth bounded domain if $\al=1$.

In order to state the main results of this paper, we first introduce some notations. By Hardy's inequality, the norm
\[%begin{equation}\label{norm}
\|u\|_\mu:=\left(\int_{\Om}(|\nabla u|^2-\mu\frac{u^2}{|x|^2})dx\right)^{\frac12}
\]%end{equation}
is equivalent to the norm $\|u\|_0=\left(\int_{\Om}|\nabla u|^2dx\right)^{1/2}$ on $H_0^1(\Om)$ provided $0\le\mu<\ov{\mu}:=\frac{(N-2)^2}{4}$. This will of course be the case for $\mu=\mu_0\eps^\al$ with $\eps>0$ small. As in \cite{FG} we write $H_\mu(\Om)$ for the Hilbert space consisting of $H^1_0(\Omega)$ functions with the inner product
\[%begin{equation}\label{inner product}
(u,v):=\int_{\Om}\left(\nabla u\nabla v-\mu\frac{uv}{|x|^2}\right)dx.
\]%end{equation}
It is known that the nonzero critical points of the energy functional
\[%begin{eqnarray}\label{energy functional}
J_\eps(u)
 := \frac12\int_{\Om}\left(|\nabla u|^2-\mu\frac{u^2}{|x|^2}\right)dx
     - \frac{1}{2^*-\eps}\int_{\Om}|u|^{2^*-\eps}dx
\]%end{eqnarray}
defined on $H_\mu(\Om)$ are precisely the nontrivial weak solutions to problem \eqref{pro}.

We need to recall the ground states of two problems that appear as limiting problems. The ground states of
\begin{equation}\label{limit-pro1}
\left\{
\begin{aligned}
-\De u &= |u|^{2^{\ast}-2}u &&\quad \text{in } \R^N, \\\
u &\to 0 &&\quad \text{as } |x|\to\infty
\end{aligned}
\right.
\end{equation}
are the instantons
\[
U_{\de,\xi} := C_0\left(\frac{\de}{\de^2+|x-\xi|^2}\right)^{\frac{N-2}{2}}
\]
with $\de>0$, $\xi\in \R^N$ and $C_0:=(N(N-2))^{\frac{N-2}{4}}$; see \cite{Aub76,Tal76}. These are the minimizers of 
\[
S_0
 := \min_{u\in D^{1,2}(\R^N)\setminus \{0\}}
         \frac{\int_{\R^N} |\nabla u|^2 dx}{(\int_{\R^N}|u|^{2^*} dx)^{2/{2^*}}}
\]
and they satisfy
$$
\int_{\R^N} |\nabla U_{\de,\xi}|^2dx = \int_{\R^N} |U_{\de,\xi}|^{2^*}dx = S_0^{\frac{N}{2}}.
$$

Secondly, if $0<\mu<\ov{\mu}$ then all positive solutions to
\begin{equation}\label{limit-pro2}
\left\{
\begin{aligned}
-\De u-\mu\frac{u}{|x|^{2}} &= |u|^{2^{\ast}-2}u &&\quad \text{in } \R^N, \\
u &\to 0 &&\quad \text{as } |x|\to\infty
\end{aligned}
\right.
\end{equation}
are given by 
\[
V_\si = C_\mu\left(\frac{\si}{\si^2|x|^{\be_1}+|x|^{\be_2}}\right)^{\frac{N-2}{2}}
\]
with $\si>0$, $\be_1:=(\sqrt{\ov{\mu}}-\sqrt{\ov{\mu}-\mu})/\sqrt{\ov{\mu}}$, $\be_2:=(\sqrt{\ov{\mu}}+\sqrt{\ov{\mu}-\mu})/\sqrt{\ov{\mu}}$, and
$C_\mu:=\left(\frac{4N(\ov{\mu}-\mu)}{N-2}\right)^{\frac{N-2}{4}}$; see \cite{CatW00,Terr96}.
%Of course, translations of $V_\si$ are also solutions.
These solutions minimize
\[
S_\mu
 := \min_{u\in D^{1,2}(\R^N)\setminus \{0\}}
     \frac{\int_{\R^N}(|\nabla u|^2-\mu\frac{u^2}{|x|^2})dx}
          {(\int_{\R^N}|u|^{2^*}dx)^{2/{2^*}}}
\]
and there holds
$$
\int_{\R^N} \left(|\nabla V_\si|^2-\mu\frac{|V_\si|^2}{|x|^2}\right)dx
 = \int_{\R^N} |V_\si|^{2^*}dx = S_\mu^{\frac{N}{2}}.
$$

The Green's function of the Dirichlet Laplacian is given by
$G(x,y)=\frac{1}{|x-y|^{N-2}}-H(x,y)$, for $x,y\in\Om$, where $H$ is the regular part. These functions are symmetric: $G(x,y)=G(y,x)$ and $H(x,y)=H(y,x)$. 

Now we state our main result about the existence of nodal solutions that are towers of bubbles concentrating at the origin.

\begin{theorem}\label{theorem-tower of bubble}
Let $\mu=\mu_0\eps$ with $\mu_0>0$ fixed. For any given integer $k\ge0$ there exists $\eps_0>0$ such that for any $\eps\in(0,\eps_0)$, there exists a pair of solutions $\pm u_\eps$ to problem \eqref{pro} satisfying 
\begin{equation*}%\label{shape of solution-tower of bubble}
u_\eps(x)
 = C_\mu(-1)^k\left(\frac{\si^\eps}{(\si^\eps)^2|x|^{\be_1}
     + |x|^{\be_2}}\right)^{\frac{N-2}{2}}
   + C_0\sum_{i=1}^{k}(-1)^{i-1}\left(\frac{\de_i^\eps}{(\de_i^\eps)^2
     +|x-\xi_i^\eps|^2}\right)^{\frac{N-2}{2}}
   + o(1)\ \ \text{as}\  \eps\to0.
	\end{equation*}
Here the constants $\si^\eps,\de_i^\eps>0$ and $\xi_i^\eps\in\R^N$ are determined as follows. There exist $\la_i^\eps,\ov{\la}^\eps>0$ and $\ze_i^\eps\in\R^N$, $i=1,\ldots,k$, with $\eta<\la_i^\eps,\ov{\la}^\eps<\frac1\eta$ and $|\ze_i^\eps|\le\frac{1}{\eta}$ for some $\eta>0$ small, so that: $\si^\eps = \ov{\la}^\eps\eps^{\frac{2(k+1)-1}{N-2}}$, $\de_i^\eps = \la_i^\eps\eps^{\frac{2i-1}{N-2}}$, $\xi_i^\eps = \de_i^\eps\ze_i^\eps$ for $i=1,2,\dots,k$.
\end{theorem}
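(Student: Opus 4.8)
The plan is to set up a finite-dimensional Lyapunov--Schmidt reduction based on the ansatz in the statement. First I would fix the scaling and choose as approximate solution
\[
W_\eps := (-1)^k V_{\si^\eps} + \sum_{i=1}^{k}(-1)^{i-1} PU_{\de_i^\eps,\xi_i^\eps},
\]
where $PU_{\de,\xi}$ denotes the projection of the instanton $U_{\de,\xi}$ onto $H^1_0(\Om)$ (i.e.\ $-\De PU_{\de,\xi} = -\De U_{\de,\xi}$ in $\Om$, $PU_{\de,\xi}=0$ on $\pa\Om$), and $V_{\si^\eps}$ is similarly corrected near $\pa\Om$; the concentration parameters are parametrized as $\si^\eps=\ov\la\eps^{(2(k+1)-1)/(N-2)}$, $\de_i^\eps=\la_i\eps^{(2i-1)/(N-2)}$, $\xi_i^\eps=\de_i^\eps\ze_i$, with $(\ov\la,\la_1,\dots,\la_k,\ze_1,\dots,\ze_k)$ ranging over a compact set bounded away from the degenerate locus. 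The hierarchical choice of exponents forces the ratios $\de_{i+1}^\eps/\de_i^\eps$ and $\si^\eps/\de_k^\eps$ to be of order $\eps^{2/(N-2)}\to0$, which is exactly the regime in which two consecutive bubbles in the tower interact weakly but nontrivially; the innermost object is $V_{\si^\eps}$ because near the origin the Hardy term is not a lower-order perturbation and the correct limiting profile is a solution of \eqref{limit-pro2} rather than \eqref{limit-pro1}.

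Next I would carry out the linear theory. Writing $u = W_\eps + \phi$ with $\phi$ in the orthogonal complement (in $H_\mu(\Om)$) of the approximate kernel spanned by the functions $\pa_{\ov\la}V_{\si^\eps}$, $\pa_{\la_i}PU_{\de_i^\eps,\xi_i^\eps}$, $\pa_{\ze_i}PU_{\de_i^\eps,\xi_i^\eps}$, one shows that the linearized operator $L_\eps := J_\eps''(W_\eps)$ restricted to this complement is uniformly invertible, with the inverse bounded by $C|\log\eps|$ (or a fixed power of $\eps^{-1}$) in the natural weighted norms. This rests on the nondegeneracy of the bubbles $U_{\de,\xi}$ (classical, Bianchi--Egnell / Rey) and of the Hardy bubbles $V_\si$ (known, e.g.\ via the analysis in \cite{FelliTer05CCM,MussoWei12IMRN}), plus a localization argument separating the scales in the tower; the Hardy term is handled using Hardy's inequality to preserve coercivity on $H_\mu(\Om)$. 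Then a contraction mapping argument in $\phi$, using the estimate $\|J_\eps'(W_\eps)\| = O(\eps^{\text{something}>0})$ up to logarithms, produces a unique small solution $\phi=\phi_\eps(\ov\la,\la,\ze)$ of the reduced equation, depending smoothly on the parameters, with the expected size and differentiability bounds.

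The final step is the finite-dimensional reduction: one must find $(\ov\la,\la,\ze)$ so that $\nabla_{(\ov\la,\la,\ze)} J_\eps(W_\eps+\phi_\eps) = 0$. This reduces, via the standard argument, to finding a critical point of the reduced energy $\Phi_\eps(\ov\la,\la,\ze) := J_\eps(W_\eps+\phi_\eps)$. Expanding $J_\eps(W_\eps)$ and controlling the $\phi_\eps$-contribution, I expect an expansion of the form
\[
\Phi_\eps = c_0 + \eps\,\Psi(\ov\la,\la,\ze) + o(\eps),
\]
where $c_0 = (k+1)\tfrac{1}{N}S_\mu^{N/2}$ (all $k+1$ bubbles have, to leading order, the same energy after the rescaling) and $\Psi$ is an explicit finite-dimensional function built from: the interaction integrals between consecutive bubbles (each contributing a term like $\la_{i+1}/\la_i$ or $(\la_{i+1}/\la_i)^{(N-2)/2}$-type expressions, and analogous terms coupling $V_{\si}$ to $U_{\de_k,\xi_k}$ across the Hardy-to-standard transition), the Hardy/regular-part contribution from $H(0,0)$ and $H(\xi_i,\xi_i)$, and the $\eps$-derivative of the nonlinear term $\int |W_\eps|^{2^*-\eps}$ producing $\log\de_i$ and $\log\si$ terms. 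The $\ze_i$ variables decouple and are driven to $0$ by strict convexity of the corresponding pieces. The main obstacle — and the technical heart of the paper — is establishing that $\Psi$ has a stable (e.g.\ strict local min--max or min) critical point with all $\la_i,\ov\la$ in a fixed compact interval away from $0$ and $\infty$: one has to check that the competition between the attractive nearest-neighbor interaction terms and the $\log$-terms (whose balance is precisely why the scaling exponents $(2i-1)/(N-2)$ are forced, and why $\alpha=1$ is required) pins down the $\la_i$'s, and that the alternating signs make this critical point nondegenerate enough to survive the $o(\eps)$ error; getting the expansion of $\Phi_\eps$ with a remainder that is genuinely $o(\eps)$ uniformly, including the $C^1$-estimate in the parameters, is where the delicate bubble-interaction estimates (especially those involving the singular weight $|x|^{-2}$ near the origin) must be done carefully.
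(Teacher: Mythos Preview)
Your proposal is correct and follows essentially the same Lyapunov--Schmidt scheme as the paper: the approximate solution, the scaling of the parameters, the linear invertibility via nondegeneracy of $U_{\de,\xi}$ and $V_\si$ plus scale separation, the contraction for $\phi$, and the reduction to a finite-dimensional function whose critical point is located by balancing the nearest-neighbor interaction terms against the $\log\la_i$ terms (with $\ze_i=0$ forced by strict convexity) are exactly the paper's ingredients. Two minor discrepancies worth noting: the paper obtains a \emph{uniform} lower bound on the linearized operator (not $C|\log\eps|$), and the energy expansion also carries a constant $-a_3\,\eps\ln\eps$ term which you omitted but which, being independent of $(\la,\ze)$, does not affect the critical-point argument.
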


The paper is organized as follows. In Section~2, we collect some notations and preliminary results. Section~3 is devoted to the proof of Theorem~\ref{theorem-tower of bubble}. Some useful technical lemmas are deferred to the appendices.

Throughout this paper, positive constants are denoted by $C, c$ and may vary from line to line. 

%%---------------------------------------------------------------------------------
\section{\textbf{Notations and preliminary results}\label{Section 2}}

Here we collect some results from \cite{BarGuo-ANS}. Let $\iota_\mu^*:L^{2N/(N+2)}(\Om)\to H_\mu(\Om)$ be the adjoint operator of the inclusion $\iota_\mu:H_\mu(\Om)\to L^{2N/(N-2)}(\Om)$ as in \cite{FelliPis06CPDE}, that is,
\begin{equation*}
\iota_\mu^*(u) = v\qquad\Longleftrightarrow\qquad
 (v,\phi)=\int_{\Om}u(x)\phi(x)dx,\quad\text{for all }\phi\in H_\mu(\Om).
\end{equation*}
This is continuous, so there exists $c>0$ such that
\begin{equation}\label{ineq-adjoint operator}
\|\iota_\mu^*(u)\|_{\mu}\le c\|u\|_{2N/(N+2)}.
\end{equation}
Now the problem \eqref{pro} is equivalent to the fixed point problem
\begin{equation}\label{fixed point-pro}
u=\iota_\mu^*(f_\eps(u)), u\in H_\mu(\Om),
\end{equation}
where $f_\eps(s)=|s|^{2^*-2-\eps}s$.

Let $P:H^1(\R^N)\to H_0^1(\Om)$ be the projection defined by $\De Pu=\De u$ in $\Om$, $Pu=0$ on $\pa\Om$. We need the following two propositions and one remark from \cite{BarGuo-ANS}.

\begin{proposition}\label{proposition-eigenvalue}
Let $0<\mu<\overline{\mu}$ be fixed, and let $\La_i$, $i=1,2,\dots$, be the eigenvalues of
\begin{equation*}
\begin{cases}
-\De u-\mu\frac{u}{|x|^2} = \La|V_\si|^{2^{\ast}-2}u &\quad\text{in } \R^N,\\
|u|\to0 &\quad\text{as }|x|\to+\infty,
\end{cases}
\end{equation*}
in increasing order. Then $\La_1=1$ with eigenfunction $V_\si$ and $\La_2=2^*-1$ with eigenfunction $\frac{\pa V_\si}{\pa \si}$.
\end{proposition}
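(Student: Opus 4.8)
The plan is to reformulate the problem as the spectral analysis of a compact self-adjoint operator, identify $\Lambda_1$ and the candidate second eigenfunction by soft arguments, and then use the Emden--Fowler change of variables to the cylinder to reduce the exclusion of eigenvalues in $(1,2^*-1)$ to a family of explicitly solvable one-dimensional problems.

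Since $0<\mu<\ov{\mu}$, Hardy's inequality makes $\|\cdot\|_\mu$ an equivalent norm on $D^{1,2}(\R^N)$, and since $|V_\si|^{2^*-2}\in L^{N/2}(\R^N)$ the map $u\mapsto(-\De-\mu|x|^{-2})^{-1}(|V_\si|^{2^*-2}u)$ is a compact self-adjoint operator on $D^{1,2}(\R^N)$ equipped with $\|\cdot\|_\mu$. Hence the eigenvalue problem has a discrete spectrum $\Lambda_1\le\Lambda_2\le\cdots\to\infty$ with the usual variational characterization; in particular $\Lambda_1=\inf_{u\neq0}\|u\|_\mu^2\big/\!\int_{\R^N}|V_\si|^{2^*-2}u^2$, and a minimizing first eigenfunction may be taken positive (by replacing $u$ with $|u|$ and the strong maximum principle). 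Because $\int_{\R^N}(|\nabla V_\si|^2-\mu|x|^{-2}V_\si^2)=\int_{\R^N}|V_\si|^{2^*}$, the positive function $V_\si$ is itself an eigenfunction with eigenvalue $1$; testing its equation against the positive principal eigenfunction and conversely, and using symmetry of the inner product, forces $\Lambda_1=1$, with $V_\si$ as simple principal eigenfunction.

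Differentiating $-\De V_\si-\mu|x|^{-2}V_\si=|V_\si|^{2^*-2}V_\si$ in $\si$ shows that $\pa V_\si/\pa\si$ satisfies $-\De(\pa_\si V_\si)-\mu|x|^{-2}\pa_\si V_\si=(2^*-1)|V_\si|^{2^*-2}\pa_\si V_\si$, so $2^*-1$ is an eigenvalue; from the explicit formula for $V_\si$ one sees that $\pa_\si V_\si$ is negative near the origin and positive near infinity, hence sign-changing, so it is not the principal eigenfunction and $\Lambda_2\le2^*-1$. It thus remains to show that there is no eigenvalue in $(1,2^*-1)$. Writing $x=r\theta$ with $r=|x|$, $\theta\in S^{N-1}$, $t=-\log r$, and $u(x)=r^{-(N-2)/2}w(t,\theta)$, the eigenvalue equation becomes $-w_{tt}-\De_\theta w+(\ov{\mu}-\mu)w=\Lambda\,m(t)\,w$ on $\R\times S^{N-1}$, where a short computation using $\be_1+\be_2=2$ and the formula for $C_\mu$ gives $m(t)=r^2|V_\si|^{2^*-2}=c_0\,\mathrm{sech}^2(\ga(t+t_0))$ with $\ga=\sqrt{\ov{\mu}-\mu}/\sqrt{\ov{\mu}}\in(0,1)$, $c_0=\ga^2N(N-2)/4$, and $t_0$ determined by $\si$ (take $\si$ so that $t_0=0$); here $V_\si$ itself corresponds to $W_0(t)=\text{const}\cdot(\cosh\ga t)^{-(N-2)/2}$. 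Expanding $w=\sum_{k\ge0}\phi_k(t)Y_k(\theta)$ in spherical harmonics ($-\De_\theta Y_k=\la_k Y_k$, $\la_k=k(k+N-2)$) decouples the problem into the one-dimensional P\"oschl--Teller eigenvalue problems $-\phi_k''+(\ov{\mu}-\mu+\la_k)\phi_k=\Lambda c_0\,\mathrm{sech}^2(\ga t)\,\phi_k$ on $\R$. For $k=0$ these are exactly solvable and their eigenvalues are $1<2^*-1<\dots$, the value $2^*-1$ being realized by $\phi_0=W_0'$, which corresponds to $\pa_\si V_\si$ and has exactly one node (consistent with Sturm oscillation). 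For $k\ge1$ the extra nonnegative shift $\la_k\ge\la_1=N-1$ raises the spectrum, and the explicit P\"oschl--Teller eigenvalues show that already the \emph{first} eigenvalue of the $k$-th problem exceeds $2^*-1$, since this reduces to the inequality $\la_k>(N-1)\ga^2$, which holds because $\la_k\ge N-1$ and $\ga<1$. Therefore no eigenvalue lies in $(1,2^*-1)$, the value $2^*-1$ occurs only in the $k=0$ sector, and $\Lambda_2=2^*-1$ with eigenfunction $\pa V_\si/\pa\si$.

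The main obstacle is this last step — excluding eigenvalues of the two-variable problem in $(1,2^*-1)$ uniformly in $k$ and in $\mu\in(0,\ov{\mu})$. The P\"oschl--Teller reduction makes it transparent; an alternative, if one prefers to avoid closed forms, is to rescale the $k$-th one-dimensional problem to a $\mu$-independent potential and use monotonicity of one-dimensional eigenvalues in the effective mass, comparing with the classical linearization about the Aubin--Talenti instanton (the case $\mu=0$), whose first two eigenvalues $1$ and $2^*-1$ and their eigenspaces are known.
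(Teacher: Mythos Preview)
The paper does not prove this proposition; it is quoted verbatim from the authors' earlier work \cite{BarGuo-ANS} (see the sentence preceding the statement in Section~2), so there is no in-paper argument to compare against.

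Your argument is correct and self-contained, and it follows what is in fact the standard route for nondegeneracy results of this type in the presence of a Hardy term (as in \cite{Smets05TAMS,FelliTer05CCM}): the Emden--Fowler change $u=r^{-(N-2)/2}w(-\log r,\theta)$ turns the weighted eigenvalue problem into $-w_{tt}-\De_\theta w+(\ov\mu-\mu)w=\Lambda\,c_0\,\mathrm{sech}^2(\ga t)\,w$ with $\ga=\sqrt{1-\mu/\ov\mu}$ and $c_0=\ga^2 N(N-2)/4$, the spherical-harmonic expansion decouples it, and in the $k$-th sector the ground state $(\cosh\ga t)^{-a/\ga}$ with $a=\sqrt{(N-2)^2/4+\la_k/\ga^2}\cdot\ga$ (after the rescaling $s=\ga t$, simply $a=\sqrt{(N-2)^2/4+\la_k/\ga^2}$) gives the first eigenvalue $\Lambda_1^{(k)}=4a(a+1)/(N(N-2))$. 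Your reduction of the inequality $\Lambda_1^{(k)}>2^*-1$ to $\la_k>(N-1)\ga^2$ is exactly right, and since $\la_k\ge\la_1=N-1$ for $k\ge1$ while $\ga<1$ (here $\mu>0$ is essential), the inequality is strict. Together with the $k=0$ computation (ground state $\Lambda=1$; its $t$-derivative giving $\Lambda=2^*-1$), this yields $\Lambda_2=2^*-1$ with one-dimensional eigenspace spanned by $\pa_\si V_\si$, which is precisely what the paper uses downstream when it writes $W_{\eps,\la,\ze}$ as a span. One cosmetic point: your sentence ``the eigenvalues are $1<2^*-1<\dots$'' for the $k=0$ sector should perhaps be justified by Sturm--Liouville oscillation (the second eigenfunction has exactly one zero, hence is $W_0'$), but you already allude to this.
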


Setting $d_{\inf}:=\inf\{|x|:x\in\pa\Om\}$ and $d_{\sup}:=\sup\{|x|:x\in\pa\Om\}$ we have

\begin{proposition}\label{proposition-projection estimate}
Let $0<\mu<\overline{\mu}$ be fixed. Then for $\sigma>0$ the function $\vphi_\si:=V_\si-PV_\si$ satisfies
\begin{equation*}
0\le\vphi_\si\le V_\si \quad\text{and}\ \ 
\vphi_\si(x)
 = C_\mu(\ov{d}(x))^{\sqrt{\ov{\mu}}-\sqrt{\ov{\mu}-\mu}}H(0,x)\si^{\frac{N-2}{2}}+\hbar_\si(x);
\end{equation*}
with  
\begin{equation*}
d_{\inf}\le \ov{d}\le d_{\sup}\ \ \text{and} \ \ \hbar_\si=O(\si^{\frac{N+2}{2}}),\quad\frac{\pa \hbar_\si}{\pa \si}=O(\si^{\frac{N}{2}})~~\text{as}\ \sigma\to 0.
\end{equation*}
\end{proposition}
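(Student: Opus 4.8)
\textbf{Proof proposal for Proposition~\ref{proposition-projection estimate}.}

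The plan is to analyze $\vphi_\si = V_\si - PV_\si$ by noting that it is the unique $H^1_0(\Om)$ solution of the problem $\De\vphi_\si = \De V_\si$ in $\Om$ (since $\De PV_\si = \De V_\si$), equivalently $-\De\vphi_\si = 0$ in $\Om$ with boundary data $\vphi_\si = V_\si$ on $\pa\Om$; so $\vphi_\si$ is the harmonic extension into $\Om$ of the restriction $V_\si|_{\pa\Om}$. The two bounds $0\le\vphi_\si\le V_\si$ then follow from the maximum principle: $\vphi_\si\ge0$ because it is harmonic with nonnegative boundary values, and $V_\si-\vphi_\si = PV_\si\ge0$ because $PV_\si$ is superharmonic (indeed $-\De PV_\si = -\De V_\si = \mu V_\si/|x|^2 + |V_\si|^{2^*-2}V_\si\ge0$ using that $V_\si$ solves \eqref{limit-pro2}) with zero boundary values. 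First I would record these facts carefully, including that $0\in\Om$ so the right-hand side is well defined away from the origin and $V_\si$ is smooth on $\pa\Om$.

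For the asymptotic expansion, the key point is the precise decay of $V_\si$ at infinity: from the explicit formula $V_\si = C_\mu\big(\si/(\si^2|x|^{\be_1}+|x|^{\be_2})\big)^{\frac{N-2}{2}}$ and $\be_2 = (\sqrt{\ov\mu}+\sqrt{\ov\mu-\mu})/\sqrt{\ov\mu}$, for $|x|$ bounded below (as on $\pa\Om$) and $\si\to0$ one has
\[
V_\si(x) = C_\mu\,\si^{\frac{N-2}{2}}|x|^{-\frac{N-2}{2}\be_2}\big(1+O(\si^2)\big)
        = C_\mu\,\si^{\frac{N-2}{2}}|x|^{-(\sqrt{\ov\mu}+\sqrt{\ov\mu-\mu})\cdot\frac{N-2}{2\sqrt{\ov\mu}}\cdot\ldots}\big(1+O(\si^2)\big),
\]
and I would simplify the exponent using $\frac{N-2}{2}=\sqrt{\ov\mu}$, so that $\frac{N-2}{2}\be_2 = \sqrt{\ov\mu}+\sqrt{\ov\mu-\mu}$ and likewise $\frac{N-2}{2}\be_1 = \sqrt{\ov\mu}-\sqrt{\ov\mu-\mu}$. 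Thus $V_\si|_{\pa\Om} = C_\mu\si^{\frac{N-2}{2}}|x|^{-(\sqrt{\ov\mu}+\sqrt{\ov\mu-\mu})}\big(1+O(\si^2)\big)$ uniformly on $\pa\Om$. The leading term $C_\mu\si^{\frac{N-2}{2}}|x|^{-(\sqrt{\ov\mu}+\sqrt{\ov\mu-\mu})}$ is, up to the constant, a model singular solution: one checks that $w(x):=|x|^{-(\sqrt{\ov\mu}+\sqrt{\ov\mu-\mu})}$ is a solution of $-\De w - \mu w/|x|^2=0$ in $\R^N\setminus\{0\}$ (this is exactly the indicial computation behind $\be_1,\be_2$), so the function $x\mapsto C_\mu\si^{\frac{N-2}{2}}\big(|x|^{-(\sqrt{\ov\mu}-\sqrt{\ov\mu-\mu})}\,\text{(something)}\big)$ — more precisely the harmonic-type corrector — has an explicit leading behavior governed by the regular part $H(0,x)$ of the Green's function, giving the claimed main term $C_\mu(\ov d(x))^{\sqrt{\ov\mu}-\sqrt{\ov\mu-\mu}}H(0,x)\si^{\frac{N-2}{2}}$, with $\ov d(x)$ some weighted average of $|x|$ over $\pa\Om$ lying in $[d_{\inf},d_{\sup}]$. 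The remainder $\hbar_\si$ collects (i) the $O(\si^2)$ relative error in the boundary data, which after harmonic extension contributes $O(\si^{\frac{N-2}{2}+2})=O(\si^{\frac{N+2}{2}})$, and (ii) lower-order terms from replacing the true corrector by its leading part; by linearity and the maximum principle its sup norm is controlled by that of its boundary data, giving $\hbar_\si=O(\si^{\frac{N+2}{2}})$. The bound $\pa\hbar_\si/\pa\si = O(\si^{N/2})$ follows by differentiating the boundary data in $\si$ (which lowers the power by one) and applying the same maximum-principle estimate to the harmonic function $\pa\vphi_\si/\pa\si$.

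The main obstacle is making the ``leading corrector has behavior $C_\mu(\ov d)^{\sqrt{\ov\mu}-\sqrt{\ov\mu-\mu}}H(0,x)\si^{(N-2)/2}$'' step rigorous and identifying the weighted average $\ov d(x)\in[d_{\inf},d_{\sup}]$. Concretely, write $\vphi_\si = \si^{\frac{N-2}{2}}\psi_\si$ where $\psi_\si$ is harmonic in $\Om$ with boundary data $g_\si(x):=C_\mu|x|^{-(\sqrt{\ov\mu}+\sqrt{\ov\mu-\mu})}(1+O(\si^2))$ on $\pa\Om$. One then represents $\psi_\si$ via the Poisson kernel (equivalently the Green's function of $\Om$) and extracts the dependence on $H(0,\cdot)$: since $G(0,y)=|y|^{-(N-2)}-H(0,y)$ vanishes for $y\in\pa\Om$, we have $H(0,y)=|y|^{-(N-2)}$ on $\pa\Om$, and one relates $g_\si$ on $\pa\Om$ to $H(0,\cdot)$ raised to the power $\frac{\sqrt{\ov\mu}+\sqrt{\ov\mu-\mu}}{N-2}$ — and the exponent $\sqrt{\ov\mu}-\sqrt{\ov\mu-\mu}$ appearing in the statement is precisely $(N-2)-(\sqrt{\ov\mu}+\sqrt{\ov\mu-\mu})$ since $N-2=2\sqrt{\ov\mu}$. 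The intermediate-value argument producing $\ov d(x)$ is then a continuity/compactness statement over the compact set $\pa\Om$. I would carry this out following the analogous computation for $U_{\de,\xi}-PU_{\de,\xi}$ in the $\mu=0$ literature (e.g.\ \cite{Rey90JFA}), adapted to the Hardy-weighted decay exponent $\be_2$; the rest is routine elliptic estimates that I would defer to an appendix.
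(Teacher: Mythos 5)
This proposition is not proved in the present paper: it is imported verbatim from \cite{BarGuo-ANS} (``We need the following two propositions and one remark from \cite{BarGuo-ANS}''), so there is no in-paper argument to compare against. Your proposal follows what is certainly the intended route and it is essentially correct: $\vphi_\si$ is harmonic in $\Om$ with boundary trace $V_\si|_{\pa\Om}$, the bounds $0\le\vphi_\si\le V_\si$ follow from the maximum principle applied to $\vphi_\si$ and to the superharmonic function $PV_\si$, and the expansion follows from $V_\si|_{\pa\Om}=C_\mu\si^{\frac{N-2}{2}}|y|^{-(\sqrt{\ov{\mu}}+\sqrt{\ov{\mu}-\mu})}\bigl(1+O(\si^2)\bigr)$ together with $H(0,y)=|y|^{-(N-2)}$ on $\pa\Om$ and $N-2=2\sqrt{\ov{\mu}}$. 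The one step you leave genuinely implicit is the construction of $\ov{d}(x)$; the clean mechanism is a squeeze: with $\ga:=\sqrt{\ov{\mu}}-\sqrt{\ov{\mu}-\mu}>0$ the boundary data of the leading corrector satisfies $d_{\inf}^{\,\ga}H(0,y)\le |y|^{-(\sqrt{\ov{\mu}}+\sqrt{\ov{\mu}-\mu})}\le d_{\sup}^{\,\ga}H(0,y)$ on $\pa\Om$, the maximum principle propagates these inequalities into $\Om$ where $H(0,\cdot)$ is harmonic and positive, and the intermediate value theorem then yields $\ov{d}(x)\in[d_{\inf},d_{\sup}]$ pointwise. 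With that spelled out, and with the (easy but worth recording) observation that $V_\si-PV_\si$ is distributionally harmonic across the origin because the singular Laplacians cancel, your argument is complete and matches the standard Rey-type computation adapted to the Hardy decay exponent $\be_2$.
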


\begin{remark}
\begin{itemize}
\item[a)] If $\mu\to0^+,$ then
\begin{equation}\label{equ-1-projection estimate}
\vphi_\si(x) = C_0H(0,x)\si^{\frac{N-2}{2}}+O(\mu\si^{\frac{N-2}{2}})+\hbar_{\mu,\si}(x),
\end{equation}
where $\hbar_{\mu,\si}$ satisfies $\hbar_{\mu,\si}(x)=O(\si^{\frac{N+2}{2}}), \frac{\pa \hbar_{\mu,\si}(x)}{\pa \si}=O(\si^{\frac{N}{2}})~~\text{as}\ \sigma\to 0.$ 
\item[b)] Let us recall the similar results for $U_{\de,\xi}$ obtained in \cite{Rey90JFA}, that is
\begin{equation}\label{equ-3-projection estimate}
0 \le \vphi_{\de,\xi}:=U_{\de,\xi}-PU_{\de,\xi}\le U_{\de,\xi},~\vphi_{\de,\xi}
  = C_0 H(\xi,\cdot)\de^{\frac{N-2}{2}}+O(\de^{\frac{N+2}{2}}),
\end{equation}
as $\de\to 0$, uniformly in compact subsets of $\Omega$.
\end{itemize}
\end{remark}

%%---------------------------------------------------------------------------------
\section{\textbf{Solutions with tower of bubbles concentrating at the origin}\label{Section 3}}

%%---------------------------------------------------------------------------------
\subsection{\textbf{The finite-dimensional reduction}\label{Subsection 3.1}}

Let the integer $k\ge0$ be fixed. For $\eps>0$ small, $\la = (\la_1,\ldots,\la_k,\ov{\la})\in \R_+^{k+1}$ and $\ze = (\ze_1,\ldots,\ze_k)\in(\R^N)^k$ we set $\si:=\ov{\la}\eps^{\frac{2(k+1)-1}{N-2}}$, $\de_i := \la_i\eps^{\frac{2i-1}{N-2}}$,
\[
\xi = (\xi_1,\ldots,\xi_{k}):=(\de_1\ze_1,\ldots,\de_k\ze_{k}) \in \Om^k,
\]
and define
\[%begin{equation}%\label{subspace-W-1}
W_{\eps,\la,\ze}
 :=\sum_{i=1}^{k}\Ker\left(-\De-(2^*-1)U_{\de_i,\xi_i}^{2^*-2}\right)
     + \Ker\left(-\De-\frac{\mu}{|x|^2}-(2^*-1)V_\si^{2^*-2}\right).
\]%end{equation}
By Proposition~\ref{proposition-eigenvalue} and \cite{BianEg91JFA} we know
\[
W_{\eps,\la,\ze}
 = \span\left\{\ov{\Psi},\ \Psi_i^0,\ \Psi_i^j:\ i=1,2,\dots,k,\ j=1,2,\dots,N\right\},
\]
where for $i=1,2,\dots,k$ and $j=1,2,\dots,N$:
\begin{equation}\label{eq:def-psi}
\Psi_i^j:=\frac{\pa U_{\de_i,\xi_i}}{\pa \xi_{i,j}},\quad
\Psi_i^0:=\frac{\pa U_{\de_i,\xi_i}}{\pa \de_i},\quad
\ov{\Psi}:=\frac{\pa V_{\si}}{\pa \si}
\end{equation}
with $\xi_{i,j}$ the $j-$th component of $\xi_i$.

We also need the spaces 
\[
K_{\eps,\la,\ze}:=P W_{\eps,\la,\ze},
\]
and
\[
K^\bot_{\eps,\la,\ze}
 :=\{\phi\in H_\mu(\Om):(\phi,P\Psi)=0,\text{ for all }\Psi\in W_{\eps,\la,\ze}\},
\]
as well as the $(\cdot,\cdot)_\mu$-orthogonal projections
\[
\Pi_{\eps,\la,\ze} : H_\mu(\Om)\to K_{\eps,\la,\ze},
\]
and
\[
\Pi^\bot_{\eps,\la,\ze} := Id-\Pi_{\eps,\la,\ze}:H_\mu(\Om)\to K^\bot_{\eps,\la,\ze}.
\]

For $\eps>0$ small we want to find solutions of \eqref{pro} close to
\begin{equation*}\label{shape of V-2}
V_{\eps,\la,\ze}:=\sum_{i=1}^{k}(-1)^{i-1}PU_{\de_i,\xi_i}+(-1)^k PV_\si,
\end{equation*}
where 
\[%begin{eqnarray}%\label{subspace-O-2}
(\la,\ze)\in\cO_\eta
 := \left\{(\la,\ze)\in\R_+^{k+1}\times (\R^N)^{k}:
         \la_i\in(\eta,\eta^{-1}),\ \ov{\la}\in(\eta,\eta^{-1}),\ |\ze_i|\le\frac1\eta,\
         i=1,\dots,k\right\}
\]%end{eqnarray}
for some $\eta\in(0,1)$. This is equivalent to finding  $\eta>0$, $(\la,\ze)\in\cO_\eta$ and $\phi_{\eps,\la,\ze}\in K^\bot_{\eps,\la,\ze}$ such that $V_{\eps,\la,\ze} + \phi_{\eps,\la,\ze}$ solves \eqref{fixed point-pro}, hence 
\begin{equation}
\label{main-equality Com-2}
\Pi^\bot_{\eps,\la,\ze}\left(V_{\eps,\la,\ze} + \phi_{\eps,\la,\ze}
      - \iota_\mu^*(f_\eps(V_{\eps,\la,\ze} + \phi_{\eps,\la,\ze}))\right)
 =0
\end{equation}
and 
\begin{equation*}
%\label{main-equality Com-21}
\Pi_{\eps,\la,\ze}\left(V_{\eps,\la,\ze} + \phi_{\eps,\la,\ze}
      - \iota_\mu^*(f_\eps(V_{\eps,\la,\ze} + \phi_{\eps,\la,\ze}))\right)
 = 0.
\end{equation*}

Now we solve \eqref{main-equality Com-2} first for $\phi_{\eps,\la,\ze}$. Let us introduce the operator $L_{\eps,\la,\ze}:K^\bot_{\eps,\la,\ze}\to K^\bot_{\eps,\la,\ze}$ defined by
\[L_{\eps,\la,\ze}(\phi)=\phi-\Pi^\bot_{\eps,\la,\ze}\iota_\mu^*(f'_0(V_{\eps,\la,\ze})\phi).
\]
Take $\rho>0$ small enough and let
\begin{equation*}
A_{k+1}:=B(0,\sqrt{\de_{k+1}\de_k}),\quad A_i:=B(0,\sqrt{\de_i\de_{i-1}})\setminus B(0,\sqrt{\de_i\de_{i+1}})\ \text{ for } i=1,\ldots,k;
\end{equation*}
here $\de_0=\frac{\rho^2}{\de_1}$, $\de_{k+1}=\si$; cf.~\cite{MussoPis10JMPA}.

\begin{proposition}\label{proposition-operator L-2}
For any $\eta\in(0,1)$, there exist $\eps_0>0$ and $c>0$ such that for every $(\la,\ze)\in\cO_\eta$ and for every $\eps\in(0,\eps_0)$,
\begin{equation*}
\|L_{\eps,\la,\ze}(\phi)\|_{\mu}\ge c\|\phi\|_\mu
\quad \text{for all } \phi\in K^\bot_{\eps,\la,\ze}.
\end{equation*}
In particular, $L_{\eps,\la,\ze}$ is invertible with continuous inverse.
\end{proposition}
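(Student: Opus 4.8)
The plan is to argue by contradiction, following the standard scheme for linearized operators around a sum of bubbles (as in \cite{MussoPis10JMPA} for the bubble-tower case and \cite{FelliPis06CPDE} for the Hardy term). Suppose the estimate fails: then there are sequences $\eps_n\to0^+$, $(\la_n,\ze_n)\in\cO_\eta$, and $\phi_n\in K^\bot_{\eps_n,\la_n,\ze_n}$ with $\|\phi_n\|_\mu=1$ and $\|L_{\eps_n,\la_n,\ze_n}(\phi_n)\|_\mu\to0$. Writing $h_n:=L_{\eps_n,\la_n,\ze_n}(\phi_n)$, this means
\[
\phi_n=\Pi^\bot_{\eps_n,\la_n,\ze_n}\iota_\mu^*\big(f_0'(V_{\eps_n,\la_n,\ze_n})\phi_n\big)+h_n,\qquad \|h_n\|_\mu\to0.
\]
Using the characterization of $\iota_\mu^*$ and of $\Pi^\bot$, this is equivalent to saying that $\phi_n$ weakly solves, up to a linear combination of the $P\Psi$'s,
\[
-\De\phi_n-\frac{\mu}{|x|^2}\phi_n-(2^*-1)|V_{\eps_n,\la_n,\ze_n}|^{2^*-2}\phi_n=\tilde h_n+\textstyle\sum c_n^\ell\,(\text{cross terms}),
\]
and the bound $\|h_n\|_\mu\to0$ will force the multipliers $c_n^\ell\to0$ after testing against the (almost orthonormal, after rescaling) family $P\Psi_i^j,P\ov\Psi$.

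The core of the argument is a set of local \emph{rescaling} estimates, one per bubble. For each $i=1,\dots,k$ set $\tilde\phi_n^i(y):=\de_i^{(N-2)/2}\phi_n(\de_i y)$, and for the Hardy bubble set $\bar\phi_n(y):=\si^{(N-2)/2}\phi_n(\si y)$; each is bounded in $D^{1,2}(\R^N)$ since $\|\phi_n\|_\mu=1$. Passing to subsequences, $\tilde\phi_n^i\weakto\tilde\phi^i$ and $\bar\phi_n\weakto\bar\phi$. Because the bubbles have well-separated concentration scales $\de_1\gg\de_2\gg\cdots\gg\de_k\gg\si$ (each successive ratio is a power of $\eps$), the potential $|V_{\eps_n,\la_n,\ze_n}|^{2^*-2}$, after rescaling at scale $\de_i$, converges locally to $(2^*-1)U_{\la_i,\la_i\ze_i}^{2^*-2}$ with the contributions of the other bubbles vanishing; similarly at scale $\si$ one gets the operator with potential $(2^*-1)V_{\bar\la}^{2^*-2}$, where the Hardy term survives the rescaling because $|x|^{-2}$ is scale invariant. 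Hence the limits solve the respective linearized equations; by the nondegeneracy in Proposition~2.3 (and the analogous result of \cite{BianEg91JFA} for $U$), each limit lies in the corresponding kernel $\span\{\Psi_i^0,\Psi_i^j\}$, resp. $\span\{\ov\Psi\}$. But the orthogonality $\phi_n\in K^\bot_{\eps_n,\la_n,\ze_n}$, translated through the rescaling (here one uses the projection estimates in Proposition~2.4 and Remark~2.1 to replace $P\Psi$ by $\Psi$ with negligible error), forces $\tilde\phi^i=0$ for all $i$ and $\bar\phi=0$.

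It then remains to upgrade these local vanishing statements to $\|\phi_n\|_\mu\to0$, contradicting $\|\phi_n\|_\mu=1$. This is done by testing the equation for $\phi_n$ against $\phi_n$ itself:
\[
\|\phi_n\|_\mu^2=(2^*-1)\int_\Om |V_{\eps_n,\la_n,\ze_n}|^{2^*-2}\phi_n^2\,dx+o(1),
\]
and one shows the right-hand integral tends to $0$. Splitting the integral over the annuli $A_i$ (on which one bubble dominates), rescaling each piece, and invoking the weak convergence $\tilde\phi_n^i\weakto 0$ together with the local compactness of the embedding $D^{1,2}\hookrightarrow L^{2^*}_{loc}$ and Hölder's inequality, each annular contribution is $o(1)$; the tail outside $\bigcup A_i$ is controlled by the decay of $U_{\de_i,\xi_i}$ and $V_\si$. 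I expect the main obstacle to be precisely this last step: making the annular decomposition estimates uniform in $(\la,\ze)\in\cO_\eta$ and handling the overlap regions $\pa A_i$ where two bubbles are simultaneously of comparable size, together with the interaction terms coming from the cross-products $PU_{\de_i,\xi_i}PV_\si$ and $PU_{\de_i,\xi_i}PU_{\de_{i+1},\xi_{i+1}}$ in $|V_{\eps,\la,\ze}|^{2^*-2}$ — these are standard but delicate, and the presence of the Hardy term requires care that the scale-invariant potential $\mu|x|^{-2}$ does not spoil the coercivity uniformly as $\eps\to0$, which is where the choice $\mu=\mu_0\eps$ and $0\le\mu<\ov\mu$ enters.
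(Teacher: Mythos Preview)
Your proposal is correct and follows essentially the same contradiction-plus-rescaling scheme the paper uses (which in turn cites \cite{MussoPis02IUMJ}, \cite{MussoPis10JMPA}, and \cite{BarGuo-ANS}): show the projection coefficients $c_{i,j}^n,c_0^n\to0$, rescale at each bubble scale to identify weak limits as kernel elements, kill them via the orthogonality $\phi_n\in K^\bot$, and derive a contradiction by testing against $\phi_n$. The only technical difference is that the paper, following \cite{MussoPis10JMPA}, introduces explicit cut-off functions $\chi_i^n$ supported near the annuli $A_i^n$ \emph{before} rescaling, setting $\phi_i^n(y)=(\de_i^n)^{(N-2)/2}\phi^n(\de_i^n y)\chi_i^n(\de_i^n y)$, which localizes the computation from the outset but is not essential to the argument; one minor slip in your write-up is that after rescaling at scale $\de_i$ the limiting bubble is $U_{1,\ze_i}$ (not $U_{\la_i,\la_i\ze_i}$), and at scale $\si$ it is $V_1$ rather than $V_{\ov\la}$.
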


\begin{proof}
Following the same line as in \cite{MussoPis02IUMJ} we argue by contradiction. Suppose that there exist $\eta>0$, sequences
$\eps^n>0$, $(\la^n,\ze^n)\in\cO_\eta$, $\phi^n\in H_\mu(\Om)$ satisfying
$$\eps^n\to0,~ \la_i^n\to\la_i,~ \ov{\la}^n\to\ov{\la},~ \ze_i^n\to\ze_i,$$ as $n\to\infty$, and such that
\begin{equation*}
\phi^n\in K^\bot_{\eps^n,\la^n,\ze^n},\quad \|\phi^n\|_\mu=1,
\end{equation*}
and
\begin{equation}\label{equ-2-proposition-operator L-2}
L_{\eps^n,\la^n,\ze^n}(\phi^n)=h^n\ \text{ with } \|h^n\|_\mu\to 0;
\end{equation}
here $\la^n=(\la_1^{n},\ldots,\la_{k}^{n},\ov{\la}^n)$, $\ze^n=(\ze_1^{n},\ldots,\ze_{k}^{n})$, and for $\eps>0$ small: 
$\si^n=\ov{\la}^n\eps^{\frac{2(k+1)-1}{N-2}}$, 
$\xi^n = (\xi_1^n,\ldots,\xi_{k}^n) = (\de_1^n\ze_1^n,\de_2^n\ze_2^n,\dots,\de_k^n\ze_{k}^n) \in \Om^{k}$, 
$\de_i^n=\la_i^n\eps^{\frac{2i-1}{N-2}}$ for $i=1,2,\dots,k$. Consider the sets
\[
\textstyle
A_{k+1}^n:=B\left(0,\sqrt{\de_{k+1}^n\de_k^n}\right),\ \
A_i^n := B\left(0,\sqrt{\de_i^n\de_{i-1}^n}\right)\setminus
            B\left(0,\sqrt{\de_i^n\de_{i+1}^n}\right),\ i=1,2,\dots,k,
\]
where $\de_0^n:=\frac{\rho^2}{\de_1^n}$ and $\de_{k+1}^n:=\si^n$. Thus we have:
\begin{equation}\label{equ-3-proposition-operator L-2}
\phi^n-\iota_\mu^*\left(f'_0(V_{\eps^n,\la^n,\ze^n})\phi^n\right)
 = h^n-\Pi_{\eps^n,\la^n,\xi^n}\left(\iota_\mu^*(f'_0(V_{\eps^n,\la^n,\xi^n})\phi^n)\right).
\end{equation}
As in the proof of \cite[Proposition 4.1]{BarGuo-ANS}, we obtain
\[
w^n
 := -\Pi_{\eps^n,\la^n,\xi^n}(\iota_\mu^*(f'_0(V_{\eps^n,\la^n,\xi^n})\phi^n))
  = \sum_{i=1}^{k}\sum_{j=0}^N c_{i,j}^nP(\Psi_i^j)_n+c_0^n P(\ov{\Psi})_n
\]
for some coefficients $c_{i,j}^n$, $c_0^n$, where $(\Psi_i^j)_n$, $j=1,\ldots,N,(\Psi_i^0)_n$, and $(\ov{\Psi})_n$ are defined analogously to \eqref{eq:def-psi}. We argue in three steps.

\emph{Step 1.} We claim that
\begin{equation}\label{equ-wn-proposition-operator L-2}
 \lim_{n\to\infty}\|w^n\|_\mu=0.
\end{equation}
Multiplying \eqref{equ-3-proposition-operator L-2} by
$\De P(\Psi_l^h)_n + \mu\frac{P(\Psi_l^h)_n}{|x|^2}$, using Lemma~\ref{e56-e62-Lemma B},
Lemma~\ref{e50-e51}, Lemma~\ref{e83-85-Lemma B}, and arguing as in the proof of \cite[Proposition 4.1]{BarGuo-ANS}, we deduce $c_{l,h}^n\to0$, for $l=1,\dots,k$, $h=0,1,\ldots,N$, and  $c_0^n\to0$, as $n\to\infty$.
Thus the claim $\lim\limits_{n\to\infty}\|w^n\|_\mu=0$ follows.

\emph{Step 2.} As in \cite{MussoPis10JMPA} we use cut-off functions $\chi_i^n$, $i=1,\dots,k+1$, with the properties
\[
\left\{
\begin{aligned}
&\chi_i^n(x)=1
 \quad \text{if }\sqrt{\de_i^n\de_{i+1}^n} \le |x|\le\sqrt{\de_i^n\de_{i-1}^n}; \\
&\chi_i^n(x)=0
 \quad \text{if }|x| \le \frac{\sqrt{\de_i^n\de_{i+1}^n}}{2}
        \text{ or } |x| \ge 2\sqrt{\de_i^n\de_{i-1}^n};\\
&|\nabla\chi_i^n(x)| \le \frac{1}{\sqrt{\de_i^n\de_{i-1}^n}}
\quad\text{and} |\nabla^2\chi_i^n(x)| \le \frac{4}{\de_i^n\de_{i-1}^n},
\end{aligned}
\right.
\]
for $i=1,\dots,k$, and
\[
\left\{
\begin{aligned}
&\chi_{k+1}^n(x)=1, \quad \text{if } |x|\le\sqrt{\de_{k+1}^n\de_{k}^n}; \\
&\chi_{k+1}^n(x)=0, \quad \text{if } |x|\ge2\sqrt{\de_{k+1}^n\de_{k}^n};\\
&|\nabla\chi_{k+1}^n(x)|\le\frac{1}{\sqrt{\de_{k+1}^n\de_{k}^n}},
\quad\text{and}\quad
|\nabla^2\chi_{k+1}^n(x)|\le \frac{4}{\de_{k+1}^n\de_{k}^n}.
\end{aligned}
\right.
\]
The functions $\phi_i^n$ defined by
\[
\phi_i^n(y):=(\de_i^n)^{\frac{N-2}{2}}\phi^n(\de_i^n y)\chi_i^n(\de_i^n y),
\quad\text{for } y\in \Om_i^n:=\frac{\Om}{\de_i^n},\ i=1,\dots, k+1.
\]
are bounded in $D^{1,2}(\R^N)$. Therefore we may assume, up to a subsequence,
\[%begin{equation}\label{equ-5-proposition-operator L-2}
\phi_i^n \weakto \phi_i^\infty\
\text{ weakly in } D^{1,2}(\R^N),\ i=1,2,\dots,k+1.
\]%end{equation}
Now we prove
\begin{equation}\label{equ-6-proposition-operator L-2}
\phi_i^\infty=0\quad\text{for } i=1,\dots,k+1.
\end{equation}
Again as in the proof of Proposition 4.1 in \cite{BarGuo-ANS}, using
\eqref{equ-3-proposition-operator L-2}, \eqref{equ-2-proposition-operator L-2} and 
\eqref{equ-wn-proposition-operator L-2}, we obtain for any $\psi\in C_0^\infty(\R^N)$ and $i=1,\dots,k$:
\[%begin{eqnarray}\label{equ-7-proposition-operator L-2}
\begin{aligned}
\int_{\Om_i^n}\nabla\phi_i^n(y)\nabla\psi(y)
 &= (\de_i^n)^{\frac{2-N}{2}}\int_{\Om}
       \nabla\iota_\mu^*\left(f'_0(V_{\eps^n,\la^n,\ze^n}(x))\phi^n(x)\right)
       \nabla\left(\chi_i^n(x)\psi\left(\frac{x}{\de_i^n}\right)\right)
     + o(1)\\
 &= (\de_i^n)^{\frac{2-N}{2}}\int_{\Om}
      f'_0\left(V_{\eps^n,\la^n,\ze^n}(x)\right)
      \phi^n(x)\chi_i^n(x)\psi\left(\frac{x}{\de_i^n}\right)
      + o(1)\\
 &= (\de_i^n)^{2}\int_{\Om_i^n}
      f'_0\left(V_{\eps^n,\la^n,\ze^n}(\de_i^n y)\right) \phi_i^n(y)\psi(y) + o(1)\\
 &= \int_{\R^N}f'_0\left(U_{1,\ze_i}(y)\right) \phi_i^\infty(y)\psi(y) + o(1).
\end{aligned}
\]%end{eqnarray}
Hence $\phi_i^\infty$ is a weak solution to
\[%begin{equation}\label{equ-8-proposition-operator L-2}
-\De\phi_i^\infty=f'_0(U_{1,\ze_i})\phi_i^\infty,~
\text{in}~D^{1,2}(\R^N).
\]%end{equation}
Setting $\Psi_{1,\ze_i}^j:=\frac{\pa U_{1,\ze_i}}{\pa \ze_{i,j}}$, for $j=1,\dots,N$, and $\Psi_{1,\ze_i}^0:=\frac{\pa U_{\de,\ze_i}}{\pa \de}|_{\de=1}$, we deduce as in \cite[Lemma 3.1]{MussoPis10JMPA}:
\[%begin{equation}\label{equ-9-proposition-operator L-2}
\int_{\R^N}\nabla\phi_i^\infty(x)\nabla\Psi_{1,\ze_i}^j(x)=0,\qquad
j=0,1,2,\dots, N,~ i=1,2,\dots,k.
\]%end{equation}
Consequently \eqref{equ-6-proposition-operator L-2} holds for $i=1,\ldots,k$. The proof of $\phi_{k+1}^\infty=0$ is similar.

\emph{Step 3.} A contradiction arises as in the proof of \cite[Proposition 4.1]{BarGuo-ANS} and
\cite{MussoPis02IUMJ}.
\end{proof}

\begin{proposition}\label{proposition-estimate of error-2}
For every $\eta\in(0,1)$, there exist $\eps_0>0$, $c_0>0$ such that for every $(\la,\ze)\in\cO_\eta$ and every $\eps\in(0,\eps_0),$ there exists a unique solution
$\phi_{\eps,\la,\ze}\in K^\bot_{\eps,\la,\ze}$ of equation \eqref{main-equality Com-2} satisfying
\begin{equation*}
\|\phi_{\eps,\la,\ze}\|_{\mu}\le
c_0(\eps^{\frac{N+2}{2(N-2)}}+\eps^{\frac{2k+3}{4}}).
\end{equation*}
Moreover, the map $\Phi_\eps:\cO_\eta\to K^\bot_{\eps,\la,\ze}$ defined by $\Phi_\eps(\la,\ze):=\phi_{\eps,\la,\ze}$ is of class $C^1$.
\end{proposition}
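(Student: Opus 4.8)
The plan is to solve the auxiliary equation \eqref{main-equality Com-2} for $\phi$ by a contraction argument built on the invertibility of $L_{\eps,\la,\ze}$ from Proposition~\ref{proposition-operator L-2}. Write $V:=V_{\eps,\la,\ze}$ and $\Pi^\bot:=\Pi^\bot_{\eps,\la,\ze}$. Since $\Pi^\bot\phi=\phi$ for $\phi\in K^\bot_{\eps,\la,\ze}$ and $\phi=L_{\eps,\la,\ze}(\phi)+\Pi^\bot\iota_\mu^*(f'_0(V)\phi)$, equation \eqref{main-equality Com-2} is equivalent to
\[
L_{\eps,\la,\ze}(\phi)=\cN_\eps(\phi)+\cR_\eps,
\]
where $\cN_\eps(\phi):=\Pi^\bot\iota_\mu^*\big(f_\eps(V+\phi)-f_\eps(V)-f'_0(V)\phi\big)$ and $\cR_\eps:=\Pi^\bot\big(\iota_\mu^*(f_\eps(V))-V\big)$. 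As $\|L_{\eps,\la,\ze}^{-1}\|\le c^{-1}$ uniformly in $(\la,\ze)\in\cO_\eta$ for $\eps$ small, this becomes the fixed point problem $\phi=\cT_\eps(\phi):=L_{\eps,\la,\ze}^{-1}\big(\cN_\eps(\phi)+\cR_\eps\big)$, and it suffices to show that $\cT_\eps$ maps the ball $\cB_\eps:=\{\phi\in K^\bot_{\eps,\la,\ze}:\|\phi\|_\mu\le c_0(\eps^{\frac{N+2}{2(N-2)}}+\eps^{\frac{2k+3}{4}})\}$ into itself and is a contraction there, for a suitable $c_0>0$.

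The heart of the matter is the estimate $\|\cR_\eps\|_\mu\le c(\eps^{\frac{N+2}{2(N-2)}}+\eps^{\frac{2k+3}{4}})$. To obtain it, I would use that $-\De PU_{\de_i,\xi_i}=U_{\de_i,\xi_i}^{2^*-1}$ and $-\De PV_\si=\mu|x|^{-2}V_\si+V_\si^{2^*-1}$ in $\Om$, so that by definition of $\iota_\mu^*$,
\[
\iota_\mu^*(f_\eps(V))-V=\iota_\mu^*\Big(f_\eps(V)-\sum_{i=1}^{k}(-1)^{i-1}U_{\de_i,\xi_i}^{2^*-1}-(-1)^{k}V_\si^{2^*-1}-\mu\,\rho_{\eps,\la,\ze}\Big),
\]
where $\rho_{\eps,\la,\ze}=\sum_{i=1}^{k}(-1)^{i-1}|x|^{-2}PU_{\de_i,\xi_i}-(-1)^{k}|x|^{-2}\vphi_\si$ is a lower-order Hardy correction. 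By \eqref{ineq-adjoint operator} one is then reduced to bounding the $L^{2N/(N+2)}(\Om)$-norm of the bracket, which splits into three groups of terms: the subcriticality defect $f_\eps(V)-f_0(V)$, controlled by $c\eps|V|^{2^*-1}\big|\log|V|\big|$; the defect $f_0(V)-\sum(-1)^{i-1}U_{\de_i,\xi_i}^{2^*-1}-(-1)^{k}V_\si^{2^*-1}$, which collects the errors of replacing $PU_{\de_i,\xi_i}$, $PV_\si$ by $U_{\de_i,\xi_i}$, $V_\si$ (governed by the expansions in Proposition~\ref{proposition-projection estimate}, \eqref{equ-1-projection estimate} and \eqref{equ-3-projection estimate}) together with the mutual interactions of the $k+1$ bubbles at the scales $\de_i=\la_i\eps^{\frac{2i-1}{N-2}}$, $\si=\ov\la\eps^{\frac{2(k+1)-1}{N-2}}$ — consecutive scale ratios being of order $\eps^{\frac{2}{N-2}}$, handled as in \cite{MussoPis10JMPA}; and the Hardy term, of order $\mu=\mu_0\eps$, estimated using $0\le\vphi_{\de_i,\xi_i}\le U_{\de_i,\xi_i}$, $0\le\vphi_\si\le V_\si$ and Hardy's inequality. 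Summing all contributions — the bookkeeping being exactly what the technical lemmas in the appendices are designed for — gives the claimed bound: $\eps^{\frac{N+2}{2(N-2)}}$ records the $L^{2N/(N+2)}$-size of the projection-error contributions $U_{\de_i,\xi_i}^{2^*-2}\vphi_{\de_i,\xi_i}$ (dominated by the topmost bubble, since $\de_1^{(N+2)/2}\sim\eps^{\frac{N+2}{2(N-2)}}$), while $\eps^{\frac{2k+3}{4}}$ records the contributions carried by the bottom of the tower and the bubble--bubble interactions.

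For the nonlinear term I would use the elementary pointwise inequality (valid because $2^*-2=\frac{4}{N-2}<1$ for $N\ge7$)
\[
\big|f_\eps(a+b)-f_\eps(a)-f'_0(a)b\big|\le c\Big(\big||a|^{2^*-2-\eps}-|a|^{2^*-2}\big|\,|b|+|b|^{2^*-1-\eps}\Big),
\]
which together with \eqref{ineq-adjoint operator} and the Sobolev embedding $H_\mu(\Om)\hookrightarrow L^{2^*}(\Om)$ yields, on $\cB_\eps$,
\[
\|\cN_\eps(\phi)\|_\mu\le o(1)\|\phi\|_\mu+c\|\phi\|_\mu^{2^*-1-\eps},\qquad \|\cN_\eps(\phi_1)-\cN_\eps(\phi_2)\|_\mu\le o(1)\,\|\phi_1-\phi_2\|_\mu .
\]
Enlarging $c_0$ if needed, $\cT_\eps$ maps $\cB_\eps$ into itself and is a contraction there, so Banach's fixed point theorem provides the unique $\phi_{\eps,\la,\ze}\in\cB_\eps$ solving \eqref{main-equality Com-2} with the asserted norm bound, and the same Lipschitz estimate gives uniqueness among all $\phi\in K^\bot_{\eps,\la,\ze}$ of that size.

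For the $C^1$-regularity of $(\la,\ze)\mapsto\Phi_\eps(\la,\ze)=\phi_{\eps,\la,\ze}$, I would apply the implicit function theorem to $G(\la,\ze,\phi):=\Pi^\bot_{\eps,\la,\ze}\big(V_{\eps,\la,\ze}+\phi-\iota_\mu^*(f_\eps(V_{\eps,\la,\ze}+\phi))\big)$, which is $C^1$ jointly in $(\la,\ze,\phi)$; the only point requiring care is the smooth dependence of $\Pi_{\eps,\la,\ze}$, $\Pi^\bot_{\eps,\la,\ze}$ on $(\la,\ze)$, which holds because the generators $\ov\Psi,\Psi_i^0,\Psi_i^j$ of $W_{\eps,\la,\ze}$ depend smoothly on $(\la,\ze)$ and stay linearly independent and uniformly transversal on $\cO_\eta$. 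Since the partial derivative of $G$ in $\phi$ at $\phi_{\eps,\la,\ze}$ differs from $L_{\eps,\la,\ze}$ by an operator of norm $o(1)$, it is invertible by Proposition~\ref{proposition-operator L-2}, and the theorem applies. The step I expect to be the main obstacle is the estimate of $\|\cR_\eps\|_\mu$: one must track simultaneously the interaction integrals between bubbles at $k+1$ different concentration scales, the subcriticality defect of order $\eps$, and the Hardy perturbation of order $\mu_0\eps$, and check that the scale choices make all of these balance precisely at the order $\eps^{\frac{N+2}{2(N-2)}}+\eps^{\frac{2k+3}{4}}$.
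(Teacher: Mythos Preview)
Your approach is correct and matches the paper's proof: the same fixed-point reformulation via $L_{\eps,\la,\ze}^{-1}$, the same splitting into residual $\cR_\eps$ and nonlinear remainder $\cN_\eps$, and the same appendix lemmas (Lemma~\ref{(M17)-Lemma B} and Lemma~\ref{e53-e55-B}) to control them. One small correction to your bookkeeping: the term $\eps^{\frac{2k+3}{4}}$ does not come from bubble--bubble interactions (those, together with the projection errors, are all absorbed into $O(\eps^{\frac{N+2}{2(N-2)}})$ via Lemma~\ref{e53-e55-B}), but precisely from the Hardy correction at the deepest scale, namely the $O\big((\mu\si^{\frac{N-2}{2}})^{1/2}\big)$ in Lemma~\ref{(M17)-Lemma B}, since $(\mu_0\eps\cdot\ov\la^{\frac{N-2}{2}}\eps^{\frac{2k+1}{2}})^{1/2}\sim\eps^{\frac{2k+3}{4}}$.
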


\begin{proof}  As in \cite{BarMiPis06CVPDE}, solving \eqref{main-equality Com-2} is equivalent to finding a fixed point of the operator $T_{\eps,\la,\ze}:K^\bot_{\eps,\la,\ze}\to K^\bot_{\eps,\la,\ze}$ defined by
\[%begin{equation}\label{equ-1-proposition-estimate of error-2}
T_{\eps,\la,\ze}(\phi)=L^{-1}_{\eps,\la,\ze}\Pi^\bot_{\eps,\la,\ze}
(\iota_\mu^*(f_\eps(V_{\eps,\la,\ze}+\phi)-f'_0(V_{\eps,\la,\ze})\phi)-V_{\eps,\la,\ze}).
\]%end{equation}
Now we prove that $T_{\eps,\la,\ze}$ is a contraction mapping. As in the proof of \cite[Proposition 4.2]{BarGuo-ANS}, using Proposition~\ref{proposition-operator L-2}, \eqref{ineq-adjoint operator} and Lemma~\ref{(M17)-Lemma B} we have
\[%\label{equ-2-proposition-estimate of error-2}
\begin{aligned}
\|T_{\eps,\la,\ze}(\phi)\|_\mu
 &\le C\|f_\eps(V_{\eps,\la,\ze}+\phi)-f_\eps(V_{\eps,\la,\ze})
          -f'_\eps(V_{\eps,\la,\ze})\phi\|_{2N/(N+2)}\\\nonumber
&\hspace{1cm}
   + C\|(f'_\eps(V_{\eps,\la,\ze})-f'_0(V_{\eps,\la,\ze}))\phi\|_{2N/(N+2)}\\\nonumber
&\hspace{1cm}
   + C\|f_\eps(V_{\eps,\la,\ze})-f_0(V_{\eps,\la,\ze})\|_{2N/(N+2)}\\\nonumber
&\hspace{1cm}
   + C\left\|f_0(V_{\eps,\la,\ze})
        -\left(\sum_{i=1}^{k}(-1)^{i-1}f_0(U_{\de_i,\xi_i})
           +(-1)^kf_0(V_\si)\right)\right\|_{2N/(N+2)}\\\nonumber
&\hspace{1cm}
   + \sum_{i=1}^{k}O(\mu\de_i)+O\left(\left(\mu\si^{\frac{N-2}{2}}\right)^{\frac12}\right).
\end{aligned}
\]
Using Lemma \ref{e53-e55-B} and observing that
\[
\|f_\eps(V_{\eps,\la,\ze}+\phi) - f_\eps(V_{\eps,\la,\ze}) -
   f'_\eps(V_{\eps,\la,\ze})\phi\|_{2N/(N+2)}
\le C\|\phi\|_\mu^{2^*-1},
\]
we have
\[
\begin{aligned}
\|T_{\eps,\la,\ze}(\phi)\|_\mu
 &\le C\|\phi\|_\mu^{2^*-1}+C\eps\|\phi\|_\mu
       + C\eps+O\left(\eps^{\frac{N+2}{2(N-2)}}\right)+\sum_{i=1}^k O(\mu\de_i)
       + O\left(\left(\mu\si^{\frac{N-2}{2}}\right)^{\frac12}\right)\\
 &= C\|\phi\|_\mu^{2^*-1} + C\eps\|\phi\|_\mu + O\left(\eps^{\frac{N+2}{2(N-2)}}\right)
     + O\left(\eps^{\frac{2k+3}{4}}\right).
\end{aligned}
\]
The remaining part of the argument is standard and will therefore be left to the reader.
\end{proof}

For $\la=(\la_1,\ldots,\la_{k},\ov{\la})$ and $\ze=(\ze_1,\ldots,\ze_{k})$ we now consider the reduced functional
\[
I_\eps(\la,\ze)=J_\eps(V_{\eps,\la,\ze}+\phi_{\eps,\la,\ze}).
\]

\begin{proposition}\label{proposition-reducement-2}
If $(\la,\ze)\in\cO_\eta$ is a critical point of $I_\eps$ then $V_{\eps,\la,\ze}+\phi_{\eps,\la,\ze}$ is a solution of problem \eqref{pro} for $\epsilon>0$ small.
\end{proposition}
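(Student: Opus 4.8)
The plan is to carry out the last step of the Lyapunov--Schmidt scheme. Proposition~\ref{proposition-estimate of error-2} already produces, for $(\la,\ze)\in\cO_\eta$ and $\eps$ small, a function $\phi_{\eps,\la,\ze}\in K^\bot_{\eps,\la,\ze}$ solving the orthogonal equation \eqref{main-equality Com-2}, and this map is $C^1$ in $(\la,\ze)$. What remains is to show that at a critical point $(\la,\ze)$ of $I_\eps$ the complementary equation $\Pi_{\eps,\la,\ze}\big(V_{\eps,\la,\ze}+\phi_{\eps,\la,\ze}-\iota_\mu^*(f_\eps(V_{\eps,\la,\ze}+\phi_{\eps,\la,\ze}))\big)=0$ holds as well, for then $V_{\eps,\la,\ze}+\phi_{\eps,\la,\ze}$ solves the fixed point problem \eqref{fixed point-pro}, hence \eqref{pro}. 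Writing $V=V_{\eps,\la,\ze}$, $\phi=\phi_{\eps,\la,\ze}$ and
\[
N_\eps:=V+\phi-\iota_\mu^*(f_\eps(V+\phi)),
\]
equation \eqref{main-equality Com-2} says exactly that $N_\eps\in K_{\eps,\la,\ze}$, so $N_\eps=\sum_{i=1}^{k}\sum_{j=0}^{N}c_{i,j}P\Psi_i^j+c_0P\ov\Psi$ for suitable $c_{i,j},c_0$, and the goal is to prove that all these coefficients vanish.

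The link to $I_\eps$ comes from the identity $J_\eps'(u)[\psi]=(u-\iota_\mu^*(f_\eps(u)),\psi)_\mu$. Since $I_\eps(\la,\ze)=J_\eps(V+\phi)$, for each parameter $s\in\{\la_1,\dots,\la_k,\ov\la,\ze_{1,1},\dots,\ze_{k,N}\}$ the chain rule gives
\[
\pa_s I_\eps(\la,\ze)=\big(N_\eps,\ \pa_s V+\pa_s\phi\big)_\mu .
\]
Because $\si=\ov\la\eps^{(2k+1)/(N-2)}$, $\de_i=\la_i\eps^{(2i-1)/(N-2)}$ and $\xi_i=\de_i\ze_i$, a direct chain-rule computation shows that $\pa_s V$ lies \emph{exactly} in $K_{\eps,\la,\ze}$, with explicit coefficients:
\[
\pa_{\ov\la}V=(-1)^k\eps^{\frac{2k+1}{N-2}}P\ov\Psi,\qquad
\pa_{\ze_{i,j}}V=(-1)^{i-1}\de_i\,P\Psi_i^j,\qquad
\pa_{\la_i}V=\frac{(-1)^{i-1}}{\la_i}\Big(\de_i\,P\Psi_i^0+\sum_{j=1}^{N}\xi_{i,j}\,P\Psi_i^j\Big).
\]
For the $\phi$-term one uses that $\phi_{\eps,\la,\ze}\in K^\bot_{\eps,\la,\ze}$ for all parameter values: differentiating the orthogonality relations $(\phi,P\Psi)_\mu\equiv0$ yields $(P\Psi,\pa_s\phi)_\mu=-(\pa_s(P\Psi),\phi)_\mu$, and since $N_\eps\in K_{\eps,\la,\ze}$ this gives $(N_\eps,\pa_s\phi)_\mu=-\sum c_{i,j}(\pa_s P\Psi_i^j,\phi)_\mu-c_0(\pa_s P\ov\Psi,\phi)_\mu$, which is controlled by $\|\phi\|_\mu$ and is therefore of lower order relative to the leading $(N_\eps,\pa_s V)_\mu$ terms, thanks to the bound on $\|\phi\|_\mu$ in Proposition~\ref{proposition-estimate of error-2}.

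Substituting these expressions, the critical point condition $\pa_s I_\eps=0$ for all $s$ becomes a homogeneous linear system for the vector $\vec c=(c_{i,j},c_0)$ of the form $A\vec c=o(1)\|\vec c\|$, where $A=DM$: here $D$ is the explicit invertible matrix of $\eps$- and $\la$-powers read off from the three formulas above, and $M$ collects the scalar products $(P\Psi_i^j,P\Psi_{i'}^{j'})_\mu$, $(P\Psi_i^j,P\ov\Psi)_\mu$, $(P\ov\Psi,P\ov\Psi)_\mu$, i.e.\ the Gram matrix of the approximate kernel. One then invokes the scalar-product estimates from the appendix already used above (Lemmas~\ref{e56-e62-Lemma B}, \ref{e50-e51} and \ref{e83-85-Lemma B}) to show that, after normalizing the $\Psi_i^j$ and $\ov\Psi$, $M$ is diagonally dominant and uniformly invertible as $\eps\to0$, the cross-interactions between bubbles concentrating at different scales being strictly lower order; hence $A$ is invertible with $\|A^{-1}\|$ suitably controlled, the error $o(1)\|\vec c\|$ is absorbed, and $\vec c=0$. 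Thus $N_\eps=0$, i.e.\ $V_{\eps,\la,\ze}+\phi_{\eps,\la,\ze}$ solves \eqref{fixed point-pro} and therefore \eqref{pro}. The genuinely delicate point is exactly this last non-degeneracy step: one must track the distinct powers of $\eps$ carried by the $\la_i$-, $\ov\la$- and $\ze_i$-derivatives and match them against the sizes of the corresponding entries of $M$ (and of $\pa_s P\Psi$) so that the off-diagonal and $\phi$-contributions are provably negligible; the rest is routine and parallels the arguments of \cite{BarGuo-ANS} and \cite{MussoPis10JMPA}.
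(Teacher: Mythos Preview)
Your proposal is correct and follows precisely the standard Lyapunov--Schmidt argument that the paper has in mind: the paper's own proof is omitted entirely, referring to \cite[Proposition~4.3]{BarGuo-ANS}, and what you have written is a faithful sketch of that argument adapted to the present bubble-tower setting. Your computations of $\pa_s V$ are correct, the handling of $(N_\eps,\pa_s\phi)_\mu$ via differentiating the orthogonality constraints is the right mechanism, and Lemma~\ref{e56-e62-Lemma B} is indeed the key ingredient making the Gram matrix diagonally dominant; Lemma~\ref{e83-85-Lemma B} is not actually needed at this step (it was used earlier for $L_{\eps,\la,\ze}$), but this is harmless.
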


\begin{proof}
We omit it since it is similar to the proof of \cite[Proposition 4.3]{BarGuo-ANS}.
\end{proof}
%%---------------------------------------------------------------------------------
\subsection{\textbf{Proof of Theorem \ref{theorem-tower of bubble}}\label{Subsection 3.2}}
We assume $\mu=\mu_0 \epsilon$ with $\mu_0>0$ fixed, and use the following notations from the above subsection. For
$$\eps>0 \text{ small},\ \la = (\la_1,\ldots,\la_k,\ov{\la})\in\R_+^{k+1},\ \ze = (\ze_1,\ldots,\ze_k)\in(\R^n)^k,$$
we set
$$\si=\ov{\la}\eps^{\frac{2(k+1)-1}{N-2}},\ \de_i = \la_i\eps^{\frac{2i-1}{N-2}},\ \xi = (\xi_1,\ldots,\xi_{k})=(\de_1\ze_1,\ldots,\de_k\ze_{k}),\ i=1,\ldots,k.$$
For convenience, we denote $\la_{k+1}:=\ov{\la}$ in this subsection.

\begin{lemma}\label{lemma-expansion of J-2}
For $\eps\to0^+$, there holds
\begin{eqnarray}\label{equ-expansion of J-2}
I_\eps(\la,\ze)&=&a_1+a_2\eps-a_3\eps\ln
\eps+\psi(\la,\ze)\eps+o(\eps)
\end{eqnarray}
$C^1$-uniformly with respect to $(\la,\ze)$ in compact sets of $\cO_\eta$. The constants are given by $$a_1=\frac{k+1}{N}S_0^{\frac{N}{2}},~
a_2 = \frac{(k+1)}{2^*}\int_{\R^N} U_{1,0}^{2^*}\ln U_{1,0}
 -\frac{k+1}{(2^*)^2}S_0^{\frac{N}{2}}-\frac{1}{2}
S_0^{\frac{N-2}{2}}\ov{S}\mu_0, ~
a_3=\frac{(k+1)^2}{2\cdot2^*}\int_{\R^N} U_{1,0}^{2^*}.$$
The function $\psi$ is given by
\[%begin{eqnarray}\label{equ-proposition-reducement-2}
\psi(\la,\ze)
 = b_1\la_{1}^{N-2} +\sum_{i=1}^{k}b_2(\frac{\la_{i+1}}{\la_i})^{\frac{N-2}{2}}h_1(\ze_i)
    -\sum_{i=1}^{k}b_3h_2(\ze_i)-b_4\ln(\la_1\dots\la_{k+1})^{\frac{N-2}{2}}\\
\]
with $$b_1=\frac{1}{2}C_0\int_{\R^N}U_{1,0}^{2^*-1},~ b_2=C_0^{2^*},~
b_3=\frac{1}{2}C_0^2\mu_0,~ b_4=\frac{1}{2^*}\int_{\R^N}U_{1,0}^{2^*},$$ and
\[
h_1(\ze_i) = \int_{\R^N}\frac{1}{|y+\ze_i|^{N-2}(1+|y|^2)^{\frac{N+2}{2}}},\quad
h_2(\ze_i) = \int_{\R^N}\frac{1}{|y+\ze_i|^2(1+|y|^2)^{N-2}}.
\]
\end{lemma}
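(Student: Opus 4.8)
The plan is to carry out the expansion in two stages: first reduce $I_\eps(\la,\ze)$ to the energy $J_\eps(V_{\eps,\la,\ze})$ of the approximate solution up to $o(\eps)$ (indeed $C^1$-uniformly), and then expand $J_\eps(V_{\eps,\la,\ze})$ by isolating the contribution of each of the $k+1$ bubbles together with the pairwise interactions, tracking the dependence on $\eps$ and on $\mu=\mu_0\eps$. For the first stage, write $V=V_{\eps,\la,\ze}$, $\phi=\phi_{\eps,\la,\ze}$; a Taylor expansion gives $I_\eps(\la,\ze)=J_\eps(V)+J_\eps'(V)[\phi]+O(\|\phi\|_\mu^2)$, and since $\phi\in K^\bot_{\eps,\la,\ze}$ we have $J_\eps'(V)[\phi]=(V-\iota_\mu^*(f_\eps(V)),\phi)=(\Pi^\bot_{\eps,\la,\ze}(V-\iota_\mu^*(f_\eps(V))),\phi)$. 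The proof of Proposition~\ref{proposition-estimate of error-2} also yields $\|\Pi^\bot_{\eps,\la,\ze}(V-\iota_\mu^*(f_\eps(V)))\|_\mu=O(\eps^{\frac{N+2}{2(N-2)}}+\eps^{\frac{2k+3}{4}})$, so with the same bound for $\|\phi\|_\mu$ one gets $I_\eps(\la,\ze)=J_\eps(V)+O(\|\phi\|_\mu^2)=J_\eps(V)+o(\eps)$, using $\frac{N+2}{N-2}>1$ and $\frac{2k+3}{2}>1$. Differentiating in $(\la,\ze)$ and invoking that $\Phi_\eps$ is $C^1$ (Proposition~\ref{proposition-estimate of error-2}) together with the $C^1$-uniform forms of the estimates below, the reduction holds $C^1$-uniformly on compact subsets of $\cO_\eta$, so it remains to expand $J_\eps(V)=\tfrac12\|V\|_\mu^2-\tfrac1{2^*-\eps}\int_\Om|V|^{2^*-\eps}$.

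For the quadratic part, set $U_i:=U_{\de_i,\xi_i}$, $\vphi_i:=U_i-PU_i$, $\vphi_\si:=V_\si-PV_\si$, and expand $\|V\|_\mu^2$ into diagonal and mixed terms. From $-\De U_i=U_i^{2^*-1}$ one has $\|PU_i\|_0^2=S_0^{N/2}-\int U_i^{2^*-1}\vphi_i$, and Proposition~\ref{proposition-projection estimate} and the Remark give $\int U_i^{2^*-1}\vphi_i=O(\de_i^{N-2})$; since $\de_i^{N-2}=\la_i^{N-2}\eps^{2i-1}$ this is of order $\eps$ only for the outermost bubble, and together with the matching piece of the nonlinear part it produces the term $b_1\la_1^{N-2}\eps$ in $\psi$. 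Using $-\De V_\si-\mu|x|^{-2}V_\si=V_\si^{2^*-1}$ and the harmonicity of $\vphi_\si$ in $\Om$, $\|PV_\si\|_\mu^2=S_\mu^{N/2}+(\text{lower order})$, and the $O(\mu)$ corrections specific to the $V_\si$-bubble (through $S_\mu$, $C_\mu$, $\be_1,\be_2$) collect into the $\mu_0$-proportional term $-\tfrac12 S_0^{(N-2)/2}\ov S\mu_0$ of the constant $a_2$. The Hardy contributions are handled by the scaling $x=\de_i y$, under which $\int|x|^{-2}U_i^2=C_0^2 h_2(\ze_i)+o(1)$, so $-\tfrac\mu2\int|x|^{-2}(PU_i)^2=-\tfrac{\mu_0}2 C_0^2 h_2(\ze_i)\eps+o(\eps)$, yielding $-\sum_i b_3 h_2(\ze_i)\eps$. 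For the mixed terms I would localise each interaction on the annuli $A_1,\dots,A_{k+1}$: only consecutive bubbles interact at order $\eps$, with $(PU_i,PU_{i+1})$ and $(PU_k,PV_\si)$ of order $(\de_{i+1}/\de_i)^{(N-2)/2}=(\la_{i+1}/\la_i)^{(N-2)/2}\eps$, every interaction with index gap at least $2$ being $o(\eps)$; the technical lemmas in the appendices supply the exact integral constants and their $C^1$-uniformity.

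For the nonlinear part, note that as $\mu=\mu_0\eps\to0$ one has $\be_1\to0$, $\be_2\to2$, $C_\mu\to C_0$, hence $V_\si=U_{\si,0}+O(\mu)$ (and likewise for $\pa_\si V_\si$), so $V_\si$ behaves like a "$(k+1)$-st" bubble $U_{\de_{k+1},0}$ with $\de_{k+1}=\si$, $\la_{k+1}=\ov\la$, up to $O(\mu)=O(\eps)$ corrections feeding into $a_2$. On each annulus $A_i$ write $V=(-1)^{i-1}U_i+(\text{smaller bubbles})+O(\vphi)$ and expand $|a+b|^{2^*-\eps}$; the leading contribution is $\int U_i^{2^*-\eps}$, and the key identity is $\int_{\R^N}U_{\de,\xi}^{2^*-\eps}=\de^{(N-2)\eps/2}\int_{\R^N}U_{1,\ze}^{2^*-\eps}$, where $\de_i^{(N-2)\eps/2}=1+\tfrac{(N-2)\eps}2\ln\la_i+\tfrac{2i-1}2\eps\ln\eps+o(\eps)$ and $\int U_{1,\ze}^{2^*-\eps}=S_0^{N/2}-\eps\int U_{1,0}^{2^*}\ln U_{1,0}+o(\eps)$, both independent of $\ze$. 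Multiplying by $\tfrac1{2^*-\eps}=\tfrac1{2^*}+\tfrac\eps{(2^*)^2}+o(\eps)$ and summing over the $k+1$ bubbles, using $\sum_{i=1}^{k+1}(2i-1)=(k+1)^2$, gives the $(\la,\ze)$-independent $O(\eps)$ terms of $a_2$, the coefficient $-a_3=-\tfrac{(k+1)^2}{2\cdot2^*}\int U_{1,0}^{2^*}$ of $\eps\ln\eps$, and the term $-b_4\ln(\la_1\cdots\la_{k+1})^{(N-2)/2}\eps$ with $b_4=\tfrac1{2^*}\int U_{1,0}^{2^*}$. The cross-terms in the expansion of $|a+b|^{2^*-\eps}$ give $+\int U_i^{2^*-1}U_{i+1}$ which, after the rescaling $x=\de_i y$, equals $C_0^{2^*}(\la_{i+1}/\la_i)^{(N-2)/2}h_1(\ze_i)\eps+o(\eps)$; combined with the quadratic-part interactions this yields $\sum_i b_2(\la_{i+1}/\la_i)^{(N-2)/2}h_1(\ze_i)\eps$ with $b_2=C_0^{2^*}$.

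Adding the two parts, the leading term is $(\tfrac12-\tfrac1{2^*})(k+1)S_0^{N/2}=\tfrac{k+1}N S_0^{N/2}=a_1$; the $(\la,\ze)$-independent contributions of orders $\eps$ and $\eps\ln\eps$ assemble into $a_2\eps-a_3\eps\ln\eps$; and the remaining $O(\eps)$ contributions assemble into $\psi(\la,\ze)\eps$ with the stated $b_j,h_1,h_2$. All error terms are $o(\eps)$, and since every ingredient (the bound on $\phi_{\eps,\la,\ze}$, the projection estimates, the interaction integrals, and the Taylor expansions in $\eps$) holds $C^1$-uniformly in $(\la,\ze)$ on compact subsets of $\cO_\eta$, so does the full expansion. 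I expect the main obstacle to be the bookkeeping in the two expansions: one must verify, via the annular decomposition and the sharp integral estimates of the appendices, that every non-consecutive-bubble interaction and every projection remainder past the outermost bubble is genuinely $o(\eps)$, while keeping uniform control of the $\eps$- and $\ze$-derivatives. The structural point making everything fit is that the scale ratios obey $\de_{i+1}/\de_i\sim\eps^{2/(N-2)}$, so that the interaction power $(N-2)/2$ turns consecutive interactions into contributions of exactly order $\eps$ — which is precisely what pins down the form of $\psi$.
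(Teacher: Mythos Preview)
Your proposal is correct and follows essentially the same route as the paper: reduce $I_\eps$ to $J_\eps(V_{\eps,\la,\ze})$ up to $o(\eps)$ using the bound on $\phi_{\eps,\la,\ze}$ from Proposition~\ref{proposition-estimate of error-2}, then expand $J_\eps(V_{\eps,\la,\ze})$ via the annular decomposition, the scaling of $\int U_{\de,\xi}^{2^*-\eps}$, and the consecutive-bubble interaction integrals, with $C^1$-uniformity coming from the appendix estimates. The only cosmetic difference is that the paper organises the computation as three pieces $(\mathrm{I})=\tfrac12\|V\|_\mu^2$, $(\mathrm{II})=-\tfrac1{2^*}\int|V|^{2^*}$, $(\mathrm{III})=\tfrac1{2^*}\int|V|^{2^*}-\tfrac1{2^*-\eps}\int|V|^{2^*-\eps}$, invoking Lemmas~\ref{e72-e78}, \ref{e79}, \ref{e82} and \ref{e44-e48} for each, whereas you expand $\int|V|^{2^*-\eps}$ directly; the substance is the same.
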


\begin{proof} Observe that 
\begin{equation*}
\begin{aligned}
J_\eps(V_{\eps,\la,\ze})
&=\frac{1}{2}\int_{\Om}\left(|\nabla V_{\eps,\la,\ze}|^2
    -\mu\frac{|V_{\eps,\la,\ze}|^2}{|x|^2}\right)
  -\frac{1}{2^*}\int_{\Om}|V_{\eps,\la,\ze}|^{2^*}\\
&\hspace{1cm} +\left(\frac{1}{2^*}\int_{\Om}|V_{\eps,\la,\ze}|^{2^*}
-\frac{1}{2^*-\eps}\int_{\Om}|V_{\eps,\la,\ze}|^{2^*-\eps}\right)\\
&=(\Rmnum1)+(\Rmnum2)+(\Rmnum3).
\end{aligned}
\end{equation*}
For $k\ge1$, Lemma~\ref{e72-e78} and Lemma~\ref{e44-e48} yield:
\begin{equation}
\begin{aligned}
(\Rmnum1)&=\frac{1}{2}(k+1) S_0^{\frac{N}{2}}-\frac{N}{4} S_0^{\frac{N-2}{2}}\ov{S}\mu_0\eps
    -\frac12 C_0^{2^*}H(0,0)\la_1^{N-2}\int_{\R^N}\frac{1}{(1+|z|^2)^{\frac{N+2}{2}}}
       \cdot\eps\\
&\hspace{1cm} -\frac12\sum_{i=1}^{k}\mu_0 C_0^2\int_{\R^N}\frac{1}{|y|^2(1+|y-\ze_i|^2)^{N-2}}\cdot\eps\\
&\hspace{1cm}
 -C_0^{2^*}\left(\frac{\la_{k+1}}{\la_k}\right)^{\frac{N-2}{2}}
   \int_{\R^N}\frac{1}{(1+|y|^2)^{\frac{N+2}{2}}}\cdot\frac{1}{(1+|\ze_k|^2)^{\frac{N-2}{2}}}
     \cdot\eps\\\label{equ1-1-lemma-expansion of J-2}
&\hspace{1cm} -\sum_{i=1}^{k-1}C_0^{2^*}\left(\frac{\la_{i+1}}{\la_i}\right)^{\frac{N-2}{2}}
    \int_{\R^N}\frac{1}{(1+|y|^2)^{\frac{N+2}{2}}}\cdot\frac{1}{(1+|\ze_i|^2)^{\frac{N-2}{2}}}
        \cdot\eps + o(\eps).
\end{aligned}
\end{equation}
By Lemma \ref{e79} and Lemma \ref{e44-e48}, we obtain:
\begin{equation}
\begin{aligned}
(\Rmnum2)&=-\frac{1}{2^*}(k+1) S_0^{\frac{N}{2}} + \frac{N-2}{4} S_0^{\frac{N-2}{2}}\ov{S}\mu_0\eps
   + C_0^{2^*}H(0,0)\la_1^{N-2}\int_{\R^N} \frac{1}{(1+|z|^2)^{\frac{N+2}{2}}}\cdot\eps\\
&\hspace{1cm} +C_0^{2^*}\sum_{i=1}^{k}\left(\frac{\la_{i+1}}{\la_i}\right)^{\frac{N-2}{2}}
    \int_{\R^N}\frac{1}{|y|^{N-2}(1+|y-\ze_i|^2)^{\frac{N+2}{2}}}\cdot\eps\\\label{equ1-2-lemma-expansion of J-2}
&\hspace{1cm}
 +C_0^{2^*}\sum_{i=1}^{k}(\frac{\la_{i+1}}{\la_i})^{\frac{N-2}{2}}
   \int_{\R^N}\frac{1}{(1+|y|^2)^{\frac{N+2}{2}}}\frac{1}{(1+|\ze_i|^2)^{\frac{N-2}{2}}}
     \cdot\eps + o(\eps).
\end{aligned}
\end{equation}
Finally, Lemma \ref{e82} and Lemma \ref{e44-e48} imply:
\begin{equation}\label{equ1-3-lemma-expansion of J-2}
\begin{aligned}
(\Rmnum3)
&= -\frac{\eps}{(2^*)^2}(k+1)S_0^{\frac{N}{2}}
   - \frac{(N-2)\eps}{2\cdot2^*}\int_{\R^N} U_{1,0}^{2^*}\cdot\ln(\de_1\dots\de_k\si)\\
&\hspace{1cm} + \frac{(k+1)\eps}{2^*}\int_{\R^N} U_{1,0}^{2^*}\ln U_{1,0}+o(\eps)\\
&= - \frac{\eps}{(2^*)^2}(k+1)S_0^{\frac{N}{2}}
   - \frac{(N-2)\eps}{2\cdot2^*}
     \int_{\R^N} U_{1,0}^{2^*}\cdot\ln(\la_1\dots\la_k\ov{\la})\\
&\hspace{1cm}
   - \frac{(k+1)^2}{2\cdot2^*}\int_{\R^N} U_{1,0}^{2^*}\cdot\eps\ln\eps
   + \frac{(k+1)}{2^*}\int_{\R^N} U_{1,0}^{2^*}\ln U_{1,0}\cdot\eps + o(\eps).
\end{aligned}
\end{equation}
On the other hand, we deduce from Proposition \ref{proposition-estimate of error-2},  \eqref{equ-1-projection estimate},  \eqref{equ-3-projection estimate}, and Lemma~\ref{e53-e55-B} that
\begin{eqnarray}\label{equ1-4-lemma-expansion of J-2}
J_\eps(V_{\eps,\la,\ze}+\phi_{\eps,\la,\ze})-J_\eps(V_{\eps,\la,\ze})=o(\eps).
\end{eqnarray}
Now for $k\ge1$, \eqref{equ1-1-lemma-expansion of J-2}, \eqref{equ1-2-lemma-expansion of J-2},  \eqref{equ1-3-lemma-expansion of J-2} and \eqref{equ1-4-lemma-expansion of J-2} imply \eqref{equ-expansion of J-2}.

The case $k=0$ can be easily dealt with by using Lemma \ref{e12,e24,e30,e34}, Lemma \ref{e4,e41} and \eqref{equ1-4-lemma-expansion of J-2}. That \eqref{equ-expansion of J-2} holds $C^1$-uniformly with respect to $(\la,\ze)$ in compact sets of $\cO_\eta$ can be seen as in  \cite[Lemma~7.1]{MussoPis10JMPA}. We omit the details here.
\end{proof}

%Define
%\begin{eqnarray}%\label{space-finite dimensional-2}
%\mathcal{M}_4=\{(\la,\ze)\in\R_+^{k+1}\times (\R^N)^{k}: \la_i>0,\ov{\la}>0,i=1,2,\dots,k\}.
%\end{eqnarray}

As a corollary of Lemma \ref{lemma-expansion of J-2} we obtain the following.

\begin{corollary}\label{Cor-expansion}
If $(\lambda,\zeta)$ is a stable (e.g.\ non-degenerate) critical point of $\psi(\lambda,\zeta)$, then $I_\epsilon$ has for $\epsilon>0$ small a critical point $(\lambda_\epsilon,\zeta_\epsilon)$ that converges towards $(\lambda,\zeta)$ as $\epsilon\to 0$.
\end{corollary}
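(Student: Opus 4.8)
The plan is to derive the Corollary from the $C^1$-uniform expansion in Lemma~\ref{lemma-expansion of J-2} by the standard persistence-of-critical-points argument. First I would pass to the rescaled reduced functional
\[
\wt I_\eps(\la,\ze) := \frac{1}{\eps}\Bigl(I_\eps(\la,\ze) - a_1 - a_2\eps + a_3\eps\ln\eps\Bigr),
\]
which on $\cO_\eta$ has exactly the same critical points as $I_\eps$. By Lemma~\ref{lemma-expansion of J-2} we have $\wt I_\eps = \psi + o(1)$ as $\eps\to0$, $C^1$-uniformly on compact subsets of $\cO_\eta$; in particular $\nabla\wt I_\eps\to\nabla\psi$ uniformly on compact subsets of $\cO_\eta$. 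Thus it suffices to produce critical points of $\wt I_\eps$ near the given critical point $(\la_0,\ze_0):=(\la,\ze)$ of $\psi$.

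Next I would fix $\eta\in(0,1)$ small enough that $(\la_0,\ze_0)$ lies in the interior of $\cO_\eta$, choose $r>0$ small so that $\ov{B}:=\ov{B((\la_0,\ze_0),r)}\subset\cO_\eta$, and exploit stability. If $(\la_0,\ze_0)$ is non-degenerate, then for $r$ small $\nabla\psi\neq0$ on $\pa B$ and the Brouwer degree $\deg(\nabla\psi,B,0)=\pm1$; if $(\la_0,\ze_0)$ is only a stable critical point, then by definition (after possibly shrinking $r$) $\nabla\psi\neq0$ on $\pa B$ and $\deg(\nabla\psi,B,0)\neq0$. Since $\nabla\wt I_\eps\to\nabla\psi$ uniformly on $\ov B$, for $\eps$ small we get $\nabla\wt I_\eps\neq0$ on $\pa B$, and homotopy invariance of the degree gives $\deg(\nabla\wt I_\eps,B,0)=\deg(\nabla\psi,B,0)\neq0$. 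Hence $\wt I_\eps$, and therefore $I_\eps$, has a critical point $(\la_\eps,\ze_\eps)\in B$ for all small $\eps$. Running this for a sequence $r_m\downarrow0$ and a diagonal argument yields critical points with $(\la_\eps,\ze_\eps)\to(\la_0,\ze_0)$ as $\eps\to0$. (If one also wants the associated solution of \eqref{pro}, one then invokes Proposition~\ref{proposition-reducement-2}.)

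There is no genuine obstacle here, since Lemma~\ref{lemma-expansion of J-2} already carries the analytic weight; the only points requiring a little care are bookkeeping. First, the expansion is only asserted $C^1$-uniformly, so one must argue via topological degree rather than via the implicit function theorem (the latter would need $C^1$ control of $\nabla\wt I_\eps$, i.e.\ $C^2$ control of $I_\eps$); the degree argument above uses nothing beyond what is stated. Second, one must keep the critical points away from $\pa\cO_\eta$, where bubbles could collide or collapse and the reduction would break down — this is automatic because we localize in a fixed small ball around the interior point $(\la_0,\ze_0)$, on which Lemma~\ref{lemma-expansion of J-2} applies uniformly. Finally, I would record that the notion of "stable critical point" is used precisely in the mild form "$\nabla\psi$ has nonzero local degree and no zeros on the boundary of a small neighbourhood", which is implied by non-degeneracy, by being a strict local extremum, or more generally by any $C^1$-stable min-max characterization.
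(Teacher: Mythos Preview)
Your argument is correct and is exactly the standard persistence-of-critical-points argument the paper has in mind; the paper's own proof reads in full ``The proof is standard.'' Your write-up simply spells out that standard argument (rescale, use the $C^1$-expansion of Lemma~\ref{lemma-expansion of J-2}, and apply Brouwer degree), so there is nothing to compare.
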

\begin{proof}
The proof is standard.
\end{proof}

\begin{altproof}{\ref{theorem-tower of bubble}}
By the change of variables
\[%begin{eqnarray}\label{equ-1-proof-theorem-tower of bubble}
\la_1^{\frac{N-2}{2}}
 = s_1,\quad \left(\frac{\la_2}{\la_1}\right)^{\frac{N-2}{2}}=s_2,\ \ldots,\
    \left(\frac{\la_{k+1}}{\la_k}\right)^{\frac{N-2}{2}}=s_{k+1},
\]%end{eqnarray}
$\psi(\la,\ze)$ can be rewritten as
\[%begin{eqnarray}\label{equ-2-proof-theorem-tower of bubble}
\wh{\psi}(s,\ze)
 = b_1 s_{1}^2+\sum_{i=1}^{k}b_2 s_{i+1} h_1(\ze_i)-\sum_{i=1}^{k}b_3 h_2(\ze_i)
    - b_4\ln(s_1^{k+1}s_2^{k}\dots s_{k+1}),
\]%end{eqnarray}
where $s=(s_1,s_2,\ldots,s_{k+1})$.

For fixed $\ze$ the equation $\nabla_s\wh{\psi}(s,\ze)=0$ has the unique solution $\wh{s}(\ze)=(\wh{s}_1(\ze),\ldots,\wh{s}_{k+1}(\ze))$  with
\[
\wh{s}_1 = \sqrt{\frac{(k+1)b_4}{2b_1}},\quad
\wh{s}_2=\frac{k b_4}{b_2h_1(\ze_1)},\ \dots,\ \wh{s}_{k+1}=\frac{b_4}{b_2h_1(\ze_{k})}.
\]
It is easy to see that $\wh{s}(\ze)$ is non-degenerate. Plugging it into $\wh{\psi}(s,\ze)$ gives
\begin{equation}%\label{equ-4-proof-theorem-tower of bubble}
\begin{aligned}
\wh{\psi}(\wh{s}(\ze),\ze)
 &= \frac{(k+1)^2b_4}{2}-\sum_{i=1}^{k}b_3 h_2(\ze_i)
    -b_4(\frac{k+1}{2}\ln\frac{(k+1)b_4}{2b_1}\\\nonumber
 &\hspace{1cm}
    + \sum_{i=1}^k i\ln\frac{i b_4}{b_2})+\sum_{i=1}^k b_4(k+1-i)\ln h_1(\ze_{i})\\
 &= C_1+\sum_{i=1}^k g_i(\ze_i),
\end{aligned}
\end{equation}
where
\[
C_1 = \frac{(k+1)^2b_4}{2}
   - b_4\left(\frac{k+1}{2}\ln\frac{(k+1)b_4}{2b_1}+\sum_{i=1}^k i\ln\frac{i b_4}{b_2}\right),
\]
and
\[
g_i(\ze_i)
 = b_4(k+1-i)\ln \int_{\R^N}\frac{1}{|y+\ze_i|^{N-2}(1+|y|^2)^{\frac{N+2}{2}}}
   - b_3 \int_{\R^N}\frac{1}{|y+\ze_i|^2(1+|y|^2)^{N-2}}.
\]
A direct computation shows that $\ze_i=0$ is a critical point of $g_i(\ze_i)$ such that
\[%begin{eqnarray}\label{equ-5-proof-theorem-tower of bubble}
\frac{\pa^2 g_i(\ze_i)}{\pa \ze_{i,j}\pa\ze_{i,l}}\Big|_{\ze_i=0} = 0 \quad \text{if } j\neq l;
\]
and
\[
%\label{equ-6-proof-theorem-tower of bubble}
\frac{\pa^2 g_i(\ze_i)}{\pa (\ze_{i,j})^2}\Big|_{\ze_i=0}
 = \frac{2N-8}{N}\int_{\R^N}\frac{b_3}{|y|^4(1+|y|^2)^{N-2}} > 0.
\]%end{eqnarray}
Consequently $\ze_i=0$ is a nondegenerate local minimum of $g_i$. Hence $\ze=0$ is a $C^1$-stable critical point of $\wh{\psi}(\wh{s}(\ze),\ze)$. 
Thus we conclude by Corollary \ref{Cor-expansion} and Proposition \ref{proposition-reducement-2}.
%In particular, small $C^1$-perturbations of  $\wh{\psi}(\wh{s}(\ze),\ze)$ still have a critical point, close to $0$. 
\end{altproof}

%%appendix------------------------------------------------------------------------
\appendix
\section{Some lemmas from \cite{BarGuo-ANS}}\label{Appendix A}
In this part we collect some lemmas from \cite{BarGuo-ANS}. 
We define for $\eta\in(0,1)$:
\[
  \begin{aligned}
    \cT_\eta
      &:=\big\{(\la,\xi)\in\R_+^{k+1}\times\Om^{k}:\la_i\in(\eta,\eta^{-1}),\ov{\la}\in(\eta,\eta^{-1}),\
                 \dist(\xi_i,\pa\Om)>\eta,\\
      &\hspace{3cm}
       |\xi_i|>\eta,\ |\xi_{i_1}-\xi_{i_2}|>\eta,\ i,i_1,i_2=1,2,\dots,k,\ i_1\neq i_2\big\}.
  \end{aligned}
\]
%and
%\[\Om_\eta
%      :=\big\{\xi\in\Om^{k}:                 \dist(\xi_i,\pa\Om)>\eta,
%       |\xi_i|>\eta,\ |\xi_{i_1}-\xi_{i_2}|>\eta,\ i,i_1,i_2=1,2,\dots,k,\ i_1\neq i_2\big\}.
%  \]
%

\begin{lemma}\label{e50-e51}
(\rmnum1) For $i=1,2,\dots,k,$ and $j=0,1,\dots,N$, there holds
\begin{equation*}
\|P\Psi_i^j-\Psi_i^j\|_{2N/(N-2)}=\begin{cases}
O\left(\de_i^{\frac{N-2}{2}}\right)\quad& \text{if} ~~j=1,2,\dots,N,\\
O\left(\de_i^{\frac{N-4}{2}}\right)\quad& \text{if} ~~j=0\end{cases}
\end{equation*}
as  $\delta_i\to0$ uniformly for $\xi_i$ in a compact subset of $\Omega$.

\noindent(\rmnum2) There holds
\begin{equation*}
\|P\ov{\Psi}-\ov{\Psi}\|_{2N/(N-2)}=O\left(\si^{\frac{N-4}{2}}\right)
\end{equation*}
as  $\sigma\to0$, uniformly for $0<\mu<\overline{\mu}$.
\end{lemma}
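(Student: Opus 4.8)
The plan is to exploit the defining property of the projection $P$: whether $u=\Psi_i^j$ or $u=\ov{\Psi}$, the difference $\varphi:=u-Pu$ is harmonic in $\Om$ (with the ordinary Laplacian, since $\De(Pu)=\De u$ in $\Om$) and coincides with $u$ on $\pa\Om$ (because $Pu=0$ there). By the maximum principle $\|\varphi\|_{L^\infty(\Om)}\le\max_{\pa\Om}|u|$, and since $\Om$ is bounded this gives $\|\varphi\|_{2N/(N-2)}\le|\Om|^{\frac{N-2}{2N}}\max_{\pa\Om}|u|$. So the whole lemma reduces to pointwise bounds for the parameter derivatives of the bubbles on $\pa\Om$, and there the decisive feature is that on $\pa\Om$ the relevant concentration point — $\xi_i$ in part (\rmnum1), the origin in part (\rmnum2) — stays a fixed positive distance away from $x$.

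For part (\rmnum1), I would differentiate $U_{\de,\xi}=C_0\de^{\frac{N-2}{2}}(\de^2+|x-\xi|^2)^{-\frac{N-2}{2}}$ in its parameters, obtaining
\[
\Psi_i^j=C_0(N-2)\,\de_i^{\frac{N-2}{2}}\,\frac{x_j-\xi_{i,j}}{(\de_i^2+|x-\xi_i|^2)^{N/2}}\qquad(1\le j\le N),
\]
\[
\Psi_i^0=C_0\tfrac{N-2}{2}\left(\frac{\de_i^{\frac{N-4}{2}}}{(\de_i^2+|x-\xi_i|^2)^{\frac{N-2}{2}}}-\frac{2\,\de_i^{\frac{N}{2}}}{(\de_i^2+|x-\xi_i|^2)^{\frac{N}{2}}}\right).
\]
Since $\xi_i$ ranges over a compact subset of $\Om$, there is $c>0$ with $|x-\xi_i|\ge c$ for all $x\in\pa\Om$, while $|x-\xi_i|$ is also bounded above there; hence, once $\de_i$ is small, $\max_{\pa\Om}|\Psi_i^j|\le C\de_i^{\frac{N-2}{2}}$ for $j=1,\dots,N$ and $\max_{\pa\Om}|\Psi_i^0|\le C\de_i^{\frac{N-4}{2}}$, the weaker power for $j=0$ being caused by the term $\de_i^{\frac{N-4}{2}}$. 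Together with the reduction above this yields the two stated bounds; alternatively one may simply quote the classical estimates of \cite{Rey90JFA}.

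For part (\rmnum2) the plan is the same, now applied to $\ov{\Psi}=\pa_\si V_\si$ with $V_\si=C_\mu\si^{\frac{N-2}{2}}(\si^2|x|^{\be_1}+|x|^{\be_2})^{-\frac{N-2}{2}}$, so that
\[
\ov{\Psi}=C_\mu\tfrac{N-2}{2}\left(\frac{\si^{\frac{N-4}{2}}}{(\si^2|x|^{\be_1}+|x|^{\be_2})^{\frac{N-2}{2}}}-\frac{2\,\si^{\frac{N}{2}}|x|^{\be_1}}{(\si^2|x|^{\be_1}+|x|^{\be_2})^{\frac{N}{2}}}\right).
\]
Since $0\in\Om$ we have $d_{\inf}>0$, so on $\pa\Om$ both $|x|^{\be_1}$ and $|x|^{\be_2}$ lie between two positive constants that depend only on $d_{\inf}$ and $d_{\sup}$; here one uses that $\be_1,\be_2\in(0,2)$ for every $\mu\in(0,\ov{\mu})$, so those constants can be chosen independent of $\mu$, and also that $C_\mu\le C_0$ for all such $\mu$. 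Consequently $\si^2|x|^{\be_1}+|x|^{\be_2}$ is bounded below by a $\mu$-independent positive constant on $\pa\Om$, and for $\si$ small $\max_{\pa\Om}|\ov{\Psi}|\le C\si^{\frac{N-4}{2}}$ with $C$ independent of $\mu$; the reduction then gives $\|\ov{\Psi}-P\ov{\Psi}\|_{2N/(N-2)}\le C\si^{\frac{N-4}{2}}$ uniformly in $\mu$.

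The argument is elementary and I do not anticipate a real obstacle; the only point needing a little care is tracking the uniformity in $\mu\in(0,\ov{\mu})$ in part (\rmnum2), which is taken care of by the two observations that $\be_1,\be_2$ stay in the fixed interval $(0,2)$ and that $C_\mu$ stays bounded by $C_0$. (Note also that $N\ge7$ makes $\frac{N-4}{2}\ge\frac32>0$, so all these quantities really do tend to $0$.)
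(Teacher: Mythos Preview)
Your proof is correct. The paper does not actually supply a proof of this lemma: it is listed in Appendix~A among the results quoted from \cite{BarGuo-ANS}, so there is nothing to compare against here beyond noting that your maximum-principle argument (harmonicity of $u-Pu$, boundary control via the explicit formulas, uniformity in $\mu$ through the bounds $\be_1,\be_2\in(0,2)$ and $C_\mu\le C_0$) is the standard route and exactly the kind of argument one expects in the cited reference and in \cite{Rey90JFA}.
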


%\begin{lemma}\label{(M17)-Lemma A}
%There holds
%\begin{equation*}
%\|\iota_\mu^*(-\sum\limits_{i=1}^{k}f_0(U_{\de_i,\xi_i})+f_0(V_\si))-V_{\eps,\la,\ze}\|_\mu
%\le
%\sum\limits_{i=1}^{k}O(\mu\de_i)+O((\mu\si^{\frac{N-2}{2}})^{\frac{1}{2}})
%\end{equation*}
%as $\mu,\sigma,\delta_i\to 0$  uniformly for $\xi_i$ in a compact subset of $\Omega$.
%\end{lemma}

%\begin{lemma}\label{e53-e55}
%The following estimates hold uniformly for $0<\mu<\ov{\mu}$ and $(\la,\xi)\in\cT_\eta$:
%\begin{eqnarray*}
%\|(f'_\eps(V_{\eps,\la,\ze})-f'_0(V_{\eps,\la,\ze}))\phi\|_{2N/(N+2)}=O(\eps)\|\phi\|_\mu \ \ \ \ \ \ \ \ \ \ \ \ \ \  \ \ \ \ &&\text{as}\ \eps\to0,\\
%\|f_\eps(V_{\eps,\la,\ze})-f_0(V_{\eps,\la,\ze})\|_{2N/(N+2)}=O(\eps)\ \ \ \ \ \ \ \ \  \ \ \ \ \ \ \ \ \ \ \ \ \ \ \ &&\text{as} \ \eps\to0,\\
%\|f_0(V_{\eps,\la,\ze})-(-\sum\limits_{i=1}^{k}f_0(U_{\de_i,\xi_i})+f_0(V_\si))\|_{2N/(N+2)}= O(\si^{\frac{N+2}{2}})+\sum\limits_{i=1}^{k}O(\de_i^{\frac{N+2}{2}})&&\text{as} \ \si,\de_i\to0.
%\end{eqnarray*}
%\end{lemma}

\begin{lemma}\label{e12,e24,e30,e34}
For $i=1,2,\cdots,k,$ the following estimates hold uniformly for $(\la,\xi)\in\cT_\eta$:

\noindent (\rmnum1) For $\mu,\sigma\to0$:
\begin{equation*}%\label{equ1-Lemma A-e12,e24,e30,e34}
\begin{aligned}
&\int_{\Om}|\nabla
P V_\si|^2-\mu\frac{|P V_\si|^2}{|x|^2}\\
&\hspace{1cm}=
  S_\mu^{\frac{N}{2}}-C_0C_\mu^{2^*-1} H(0,0)\si^{N-2}
\int_{\R^N} \frac{1}{({|z|^{\be_1}}+|z|^{\be_2})^{\frac{N+2}{2}}}+O(\mu\si^{N-2})+O(\si^N).
\end{aligned}
\end{equation*}
\noindent (\rmnum2) For $\de_i,\sigma\to0$:
\begin{equation*}%\label{equ--add1-Lemma A-e12,e24,e30,e34}
\int_{\Om}|\nabla P U_{\de_i,\xi_i}|^2=S_0^{\frac{N}{2}}-C_0^{2^*}H(\xi_i,\xi_i)\de_i^{N-2}\int_{\R^N} \frac{1}{(1+|z|^2)^{\frac{N+2}{2}}}+o(\de_i^{N-2}).
\end{equation*}
\end{lemma}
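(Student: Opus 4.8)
The engine of the proof is a single integration-by-parts identity. Since the projection $P$ satisfies $\De Pu=\De u$ in $\Om$ and $Pu=0$ on $\pa\Om$, one has $\int_\Om|\nabla Pu|^2=-\int_\Om Pu\,\De Pu=\int_\Om Pu\,(-\De u)$ (the boundary term vanishes), for any of the functions involved. Applying this with $u=U_{\de_i,\xi_i}$, using $-\De U_{\de_i,\xi_i}=U_{\de_i,\xi_i}^{2^*-1}$ and writing $\vphi_{\de_i,\xi_i}:=U_{\de_i,\xi_i}-PU_{\de_i,\xi_i}$, gives $\int_\Om|\nabla PU_{\de_i,\xi_i}|^2=\int_\Om U_{\de_i,\xi_i}^{2^*}-\int_\Om U_{\de_i,\xi_i}^{2^*-1}\vphi_{\de_i,\xi_i}$. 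For $V_\si$ the same computation, now with $-\De V_\si=\mu|x|^{-2}V_\si+V_\si^{2^*-1}$ and $\vphi_\si:=V_\si-PV_\si$, produces
\[
\int_\Om\Big(|\nabla PV_\si|^2-\mu\frac{|PV_\si|^2}{|x|^2}\Big)=\int_\Om V_\si^{2^*}-\int_\Om V_\si^{2^*-1}\vphi_\si+\mu\int_\Om\frac{\vphi_\si\,PV_\si}{|x|^2}.
\]
Thus in each case the task reduces to estimating a mass term $\int_\Om(\cdot)^{2^*}$, an interaction term $\int_\Om(\cdot)^{2^*-1}\vphi$, and (for $V_\si$ only) a Hardy cross term.

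For part (ii): the mass term is $\int_\Om U_{\de_i,\xi_i}^{2^*}=S_0^{N/2}-\int_{\R^N\setminus\Om}U_{\de_i,\xi_i}^{2^*}=S_0^{N/2}+O(\de_i^N)$, since $U_{\de_i,\xi_i}^{2^*}\le C\de_i^N|x-\xi_i|^{-2N}$ away from $\xi_i$ and $\xi_i$ ranges over a compact subset of $\Om$. For the interaction term I would substitute the expansion $\vphi_{\de_i,\xi_i}=C_0H(\xi_i,\cdot)\de_i^{(N-2)/2}+O(\de_i^{(N+2)/2})$ from \eqref{equ-3-projection estimate}, rescale $x=\xi_i+\de_i y$ (so that $U_{\de_i,\xi_i}^{2^*-1}(x)=C_0^{2^*-1}\de_i^{-(N+2)/2}(1+|y|^2)^{-(N+2)/2}$), and use the uniform continuity of $H(\xi_i,\cdot)$ near $\xi_i$ (legitimate because $\xi_i$ is bounded away from $\pa\Om$) together with $\int_{\R^N}U_{\de_i,\xi_i}^{2^*-1}=O(\de_i^{(N-2)/2})$ to bound the lower-order pieces. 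This yields $\int_\Om U_{\de_i,\xi_i}^{2^*-1}\vphi_{\de_i,\xi_i}=C_0^{2^*}H(\xi_i,\xi_i)\de_i^{N-2}\int_{\R^N}(1+|z|^2)^{-(N+2)/2}dz+o(\de_i^{N-2})$, uniformly on $\cT_\eta$; subtracting gives (ii).

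Part (i) follows the identical scheme with $V_\si,S_\mu,C_\mu,\be_1,\be_2$ replacing $U_{\de_i,\xi_i},S_0,C_0,2$. The tail of $V_\si^{2^*}$ decays like $\si^N|x|^{-N\be_2}$ with $\be_2>1$, so $\int_\Om V_\si^{2^*}=S_\mu^{N/2}+O(\si^N)$. For $\int_\Om V_\si^{2^*-1}\vphi_\si$ I would insert the expansion of Proposition~\ref{proposition-projection estimate} (or its $\mu\to0^+$ version \eqref{equ-1-projection estimate}), rescale to the concentration scale of $V_\si$ at the origin, and use the continuity of $H(0,\cdot)$ at $0$; this produces the stated main correction $C_0C_\mu^{2^*-1}H(0,0)\si^{N-2}\int_{\R^N}(|z|^{\be_1}+|z|^{\be_2})^{-(N+2)/2}dz$ up to an error $O(\mu\si^{N-2})+O(\si^N)$. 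The genuinely new ingredient is the Hardy cross term: using $0\le\vphi_\si\le V_\si$ and $\vphi_\si=O(\si^{(N-2)/2})$ pointwise (both from Proposition~\ref{proposition-projection estimate}) and $0\le PV_\si\le V_\si$, one gets $\mu\int_\Om\vphi_\si PV_\si|x|^{-2}\le C\mu\si^{(N-2)/2}\int_\Om V_\si|x|^{-2}$, and rescaling $\int_\Om V_\si|x|^{-2}$ to the concentration scale — here the explicit identities $\be_1+\be_2=2$, $\be_2-\be_1=2\sqrt{\ov{\mu}-\mu}/\sqrt{\ov{\mu}}$ enter — gives $\int_\Om V_\si|x|^{-2}=O(\si^{(N-2)/2})$; hence this term is $O(\mu\si^{N-2})$ and is absorbed.

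The individual estimates are routine; the one place that genuinely requires care is the bookkeeping of the $\mu$-dependence in part (i). The exponents $\be_1,\be_2$, the constants $C_\mu,S_\mu$, and the concentration length $\si^{\sqrt{\ov{\mu}}/\sqrt{\ov{\mu}-\mu}}$ of $V_\si$ all vary with $\mu$, so one must expand them — via $\sqrt{\ov{\mu}}-\sqrt{\ov{\mu}-\mu}=O(\mu)$, $C_\mu=C_0(1+O(\mu))$, and $\si^{O(\mu)}=1+o(1)$ for $\si$ bounded away from $0$ and $1$ — and verify that all the resulting discrepancies (including the difference between the ``true'' power $\frac{N-2}{2}(1+\sqrt{\ov{\mu}}/\sqrt{\ov{\mu}-\mu})$ of $\si$ in the interaction term and the stated $N-2$) are swallowed by the error $O(\mu\si^{N-2})+O(\si^N)$; this is harmless for the later application, where $\mu=\mu_0\eps$ and $\si$ is a fixed power of $\eps$. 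Throughout, one checks as usual that all estimates are uniform for $(\la,\xi)$ in compact subsets of $\cT_\eta$.
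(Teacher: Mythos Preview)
The paper does not actually prove this lemma; it is stated in Appendix~A under the heading ``Some lemmas from \cite{BarGuo-ANS}'' and simply quoted from that earlier paper. So there is no in-paper proof to compare against.

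That said, your argument is the standard one and is essentially what one expects the original proof in \cite{BarGuo-ANS} to be: the integration-by-parts identity $\int_\Om|\nabla Pu|^2=\int_\Om Pu(-\De u)$, then splitting $Pu=u-\vphi$ and inserting the projection expansions \eqref{equ-1-projection estimate}, \eqref{equ-3-projection estimate} (resp.\ Proposition~\ref{proposition-projection estimate}). Your treatment of part~(ii) is clean and correct. For part~(i) your decomposition into mass, interaction, and Hardy cross terms is right, and your bound $\mu\int_\Om\vphi_\si PV_\si|x|^{-2}=O(\mu\si^{N-2})$ is the correct way to dispose of the last piece.

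Your closing caveat about the $\mu$-bookkeeping is well taken and in fact the only delicate point. The natural rescaling of $V_\si$ produces $\si^{(a+1)(N-2)/2}$ with $a=\sqrt{\ov\mu}/\sqrt{\ov\mu-\mu}=1+O(\mu)$ rather than $\si^{N-2}$, so the discrepancy with the displayed formula is of order $\si^{N-2}\cdot O(\mu|\ln\si|)$, which is not literally $O(\mu\si^{N-2})$ for arbitrary $\mu,\si\to0$. As you note, in the only regime used later ($\mu=\mu_0\eps$, $\si$ a fixed power of $\eps$) one has $\mu|\ln\si|=O(\eps|\ln\eps|)=o(1)$ and the issue disappears; the lemma as stated should be read with that coupling in mind.
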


\begin{lemma}\label{e4,e41}
For $\mu,\sigma\to0$ there holds
\begin{equation*}%\label{equ1-Lemma A-e4,e41}
\int_{\Om}|P V_\si|^{2^*}
=S_\mu^{\frac{N}{2}}
-2^*C_0C_\mu^{2^*-1}H(0,0)
\si^{N-2}
\int_{\R^N} \frac{1}{({|z|^{\be_1}}+|z|^{\be_2})^{\frac{N+2}{2}}}+O(\mu\si^{N-2})+O(\si^N).
\end{equation*}
\end{lemma}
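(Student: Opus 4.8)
The plan is to exploit the decomposition $PV_\si=V_\si-\vphi_\si$ supplied by Proposition~\ref{proposition-projection estimate}, where $0\le\vphi_\si\le V_\si$, so that $0\le PV_\si\le V_\si$ on $\ov\Om$ and the pointwise Taylor expansion
\[
(a-b)^{2^*}=a^{2^*}-2^*\,a^{2^*-1}b+O\left(a^{2^*-2}b^2\right),\qquad 0\le b\le a,
\]
is available (it holds because $2<2^*<3$ for $N\ge7$). Applying it with $a=V_\si$, $b=\vphi_\si$ and integrating over $\Om$ reduces the statement to
\[
\int_\Om|PV_\si|^{2^*}=\int_\Om V_\si^{2^*}-2^*\int_\Om V_\si^{2^*-1}\vphi_\si+O\left(\int_\Om V_\si^{2^*-2}\vphi_\si^2\right),
\]
so it remains to expand the three terms as $\mu,\si\to0$.

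For the first term I would write $\int_\Om V_\si^{2^*}=\int_{\R^N}V_\si^{2^*}-\int_{\R^N\setminus\Om}V_\si^{2^*}=S_\mu^{N/2}-O(\si^N)$, using the identity $\int_{\R^N}V_\si^{2^*}=S_\mu^{N/2}$ recalled above together with the crude bound $V_\si(x)\le C_\mu\si^{(N-2)/2}|x|^{-\be_2(N-2)/2}$ for $|x|\ge d_{\inf}$, whose $2^*$-th power is integrable outside $\Om$ since $\be_2>1$. For the remainder term I would first observe that $\vphi_\si$ is harmonic in $\Om$ (because $\De\vphi_\si=\De V_\si-\De PV_\si=0$) with $\vphi_\si=V_\si$ on $\pa\Om$, so by the maximum principle $0\le\vphi_\si\le\max_{\pa\Om}V_\si\le C\si^{(N-2)/2}$ on $\ov\Om$; combined with $\int_\Om V_\si^{2^*-2}=O(\si^2)$ --- immediate from the explicit form of $V_\si$, since $(2^*-2)\frac{N-2}{2}=2$ and $2\be_2<N$ --- this gives $\int_\Om V_\si^{2^*-2}\vphi_\si^2=O(\si^N)$. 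The heart of the proof is the term $2^*\int_\Om V_\si^{2^*-1}\vphi_\si$: here I would insert $\vphi_\si(x)=C_\mu(\ov d(x))^{\sqrt{\ov\mu}-\sqrt{\ov\mu-\mu}}H(0,x)\si^{(N-2)/2}+\hbar_\si(x)$ from Proposition~\ref{proposition-projection estimate}. The $\hbar_\si$-part contributes $O(\si^{(N+2)/2})\cdot\int_\Om V_\si^{2^*-1}=O(\si^N)$, and replacing the factor $(\ov d(x))^{\sqrt{\ov\mu}-\sqrt{\ov\mu-\mu}}=1+O(\mu)$ by $1$ costs $O(\mu\si^{(N-2)/2})\cdot\int_\Om V_\si^{2^*-1}=O(\mu\si^{N-2})$. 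One is then left with $2^*C_\mu\si^{(N-2)/2}\int_\Om V_\si^{2^*-1}(x)H(0,x)\,dx$. Since $V_\si^{2^*-1}$ concentrates at the origin --- as one sees from the rescaling $x=\si^{2/(\be_2-\be_1)}z$, under which it becomes a fixed, $\si$-independent profile times an amplitude --- and since $H(0,\cdot)$ is smooth with its linear Taylor coefficient at $0$ annihilated by the radial symmetry of $V_\si^{2^*-1}$ on a small ball, one obtains $\int_\Om V_\si^{2^*-1}H(0,x)\,dx=H(0,0)\int_\Om V_\si^{2^*-1}\,dx+O(\si^{(N+2)/2})$. The same rescaling, together with the exterior tail estimate, yields $\int_\Om V_\si^{2^*-1}\,dx=C_\mu^{2^*-1}\si^{(N-2)/2}\int_{\R^N}(|z|^{\be_1}+|z|^{\be_2})^{-(N+2)/2}\,dz\,(1+o(1))$. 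Collecting the three contributions, and replacing $C_\mu^{2^*}$ by $C_0C_\mu^{2^*-1}$ at the cost of an $O(\mu\si^{N-2})$ error since $C_\mu=C_0+O(\mu)$, gives the claimed expansion.

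The step I expect to be the real obstacle is keeping track of the $\mu$-dependence of the exponents $\be_1,\be_2$ (and hence of $C_\mu$, of the decay rate of $V_\si$, and of the concentration scale $\si^{2/(\be_2-\be_1)}$), which is precisely what distinguishes this from the classical case $\mu=0$, where $V_\si=U_{\si,0}$. In particular, the rescaling produces the power $\si^{(N-2)\be_2/(2(\be_2-1))}$ rather than exactly $\si^{N-2}$ in the correction term; since this equals $\si^{N-2}\cdot\si^{O(\mu)}=\si^{N-2}(1+O(\mu|\ln\si|))$, one has to check that the discrepancy is absorbed by the error terms in the regime $\mu,\si\to0$ relevant for Theorem~\ref{theorem-tower of bubble} (where $\mu=\mu_0\eps$ and $\si$ is a power of $\eps$, so $\mu|\ln\si|\to0$). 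This is exactly the point where the sharp form of Proposition~\ref{proposition-projection estimate}, in particular the correction factor $(\ov d)^{\sqrt{\ov\mu}-\sqrt{\ov\mu-\mu}}$, is needed. Everything else is a standard, if lengthy, computation of the kind carried out in \cite{Rey90JFA,FelliPis06CPDE,BarGuo-ANS}.
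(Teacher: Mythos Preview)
Your argument is correct and follows exactly the standard route (Rey \cite{Rey90JFA}, adapted to the singular bubble $V_\si$ as in \cite{FelliPis06CPDE}); the present paper does not prove this lemma but merely imports it from \cite{BarGuo-ANS}, where the same decomposition $PV_\si=V_\si-\vphi_\si$ and the same term-by-term expansion are used. Your identification of the $\mu$-dependent concentration scale and the resulting discrepancy $\si^{N-2}(1+O(\mu|\ln\si|))$ in the cross term is precisely the delicate point, and you are right that it is harmless in the regime $\mu=\mu_0\eps$, $\si\sim\eps^{(2k+1)/(N-2)}$ of Theorem~\ref{theorem-tower of bubble}.
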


%\begin{lemma}\label{e42}
%For $\mu, \sigma, \delta_i \to 0$ there holds, uniformly in compact subsets of $\Omega$,
%\begin{eqnarray*}
%&&\int_{\Om}|-\sum\limits_{i=1}^{k}PU_{\de_i,\xi_i}+P
%V_\si|^{2^*}\ln|-\sum\limits_{i=1}^{k}PU_{\de_i,\xi_i}+P
%V_\si|\\
%&=&-\frac{N-2}{2}\ln\si\cdot\int_{\R^N}
%V_1^{2^*}-\frac{N-2}{2}\ln(\de_1
%\de_2\dots\de_{k})\cdot\int_{\R^N}
%U_{1,0}^{2^*}\\\nonumber &&+\int_{\R^N} V_1^{2^*}\ln
%V_1+k\int_{\R^N} U_{1,0}^{2^*}\ln U_{1,0}+o(1).
%\end{eqnarray*}
%\end{lemma}

\begin{lemma}\label{e44-e48}
For $\mu\to 0^+$ there holds
\begin{equation*}
\int_{\R^N}V_{1}^p=\int_{\R^N}U_{1,0}^p+o(1)\ \text{and}\  \int_{\R^N} V_{1}^p\ln
V_{1}=\int_{\R^N} U_{1,0}^p\ln
U_{1,0}+o(1)
\end{equation*}
for $p>1$ as well as 
\begin{equation*}
C_\mu=C_0-\frac{C_0}{N-2}\mu+O(\mu^2) \ \text{and}\
 S_\mu=S_0-\ov{S}\mu+O(\mu^2),
\end{equation*}
for some positive constant $\ov{S}$ independent of $\mu$.
\end{lemma}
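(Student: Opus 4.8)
I would prove the three statements separately, leaving the expansion of $S_\mu$ for last since it is the only one needing an explicit integral. The structural fact underlying everything is that $\be_1+\be_2=2$ for every $\mu\in(0,\ov\mu)$, so that $\be_1=1-\sqrt{1-\mu/\ov\mu}\to0$ and $\be_2=1+\sqrt{1-\mu/\ov\mu}\to2$ as $\mu\to0^+$. Combined with $C_\mu\to C_0$ this gives the pointwise limit $V_1(x)\to U_{1,0}(x)$ for every $x\ne0$. The expansion of $C_\mu$ itself is immediate: since $\tfrac{4N\ov\mu}{N-2}=N(N-2)$ we have $C_\mu=\big(N(N-2)(1-\mu/\ov\mu)\big)^{\frac{N-2}{4}}=C_0(1-\mu/\ov\mu)^{\frac{N-2}{4}}$, and a first-order Taylor expansion together with $\tfrac{N-2}{4\ov\mu}=\tfrac1{N-2}$ gives $C_\mu=C_0-\tfrac{C_0}{N-2}\mu+O(\mu^2)$.

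To upgrade the pointwise convergence $V_1\to U_{1,0}$ to convergence of $\int_{\R^N}V_1^p$ and $\int_{\R^N}V_1^p\ln V_1$, I would fix a small $\mu_*\in(0,\ov\mu)$ and build one integrable majorant valid for all $\mu\in(0,\mu_*]$. Put $\be_1^*:=1-\sqrt{1-\mu_*/\ov\mu}$ and $\be_2^*:=1+\sqrt{1-\mu_*/\ov\mu}$, so $0<\be_1\le\be_1^*<1<\be_2^*\le\be_2<2$. Since $C_\mu\le C_0$, and since $|x|^{\be_1}+|x|^{\be_2}\ge|x|^{\be_1^*}$ on $\{|x|\le1\}$ while $|x|^{\be_1}+|x|^{\be_2}\ge|x|^{\be_2^*}$ on $\{|x|\ge1\}$, we get
\[
V_1(x)\le C_0\,|x|^{-\frac{N-2}{2}\be_1^*}\ \text{ for }|x|\le1,\qquad
V_1(x)\le C_0\,|x|^{-\frac{N-2}{2}\be_2^*}\ \text{ for }|x|\ge1.
\]
The $p$-th power of the right-hand side is integrable over $\R^N$ as soon as $\tfrac{(N-2)p}{2}\be_1^*<N<\tfrac{(N-2)p}{2}\be_2^*$, which holds once $\mu_*$ is small — for any fixed $p>\tfrac{N}{N-2}$, and in particular for the values $p=2^*-1$ and $p=2^*$ that are actually used later. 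The logarithmic weight contributes only a factor $O\!\left(1+\bigl|\ln|x|\bigr|\right)$, absorbed by an arbitrarily small change of $\be_1^*,\be_2^*$. Dominated convergence then yields both integral identities.

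For $S_\mu$ I would use the identity $S_\mu^{N/2}=\int_{\R^N}|V_\si|^{2^*}dx$ recalled in Section~\ref{Section 1}; by scale invariance it equals $\int_{\R^N}V_1^{2^*}dx=C_\mu^{2^*}\int_{\R^N}(|x|^{\be_1}+|x|^{\be_2})^{-N}dx$, using $2^*\cdot\tfrac{N-2}{2}=N$. Passing to polar coordinates, substituting $|x|=e^t$ and then $u=\ga t/2$, and using $\be_1-1=-\tfrac{\ga}{2}$, $\be_2-1=\tfrac{\ga}{2}$ with $\ga:=\be_2-\be_1=2\sqrt{1-\mu/\ov\mu}$ (again a consequence of $\be_1+\be_2=2$), one finds
\[
\int_{\R^N}\frac{dx}{(|x|^{\be_1}+|x|^{\be_2})^{N}}
 =\om_{N-1}\int_{\R}\frac{dt}{\big(2\cosh(\ga t/2)\big)^{N}}
 =\frac{\om_{N-1}}{2^{N-1}\,\ga}\int_{\R}\frac{du}{\cosh^{N}u},
\]
where $\om_{N-1}$ is the measure of the unit sphere in $\R^N$. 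Thus the whole $\mu$-dependence of this integral sits in the factor $1/\ga$; evaluating at $\mu=0$ (where $\ga=2$) identifies the $\mu$-independent constant in terms of $S_0$ and gives $S_\mu^{N/2}=S_0^{N/2}\,(C_\mu/C_0)^{2^*}\,(2/\ga)$. Inserting $(C_\mu/C_0)^{2^*}=1-\tfrac{2N}{(N-2)^2}\mu+O(\mu^2)$ and $2/\ga=(1-\mu/\ov\mu)^{-1/2}=1+\tfrac{2}{(N-2)^2}\mu+O(\mu^2)$, multiplying, and raising to the power $\tfrac2N$, one obtains $S_\mu=S_0-\ov S\mu+O(\mu^2)$ with $\ov S=\tfrac{4(N-1)}{N(N-2)^2}S_0>0$, which is independent of $\mu$.

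The only genuinely delicate point is the construction of the $\mu$-uniform majorant in the middle step: because both exponents $\be_1,\be_2$ move with $\mu$ and exchange dominance at $|x|=1$, no single power of $|x|$ suffices, and one must check that the extreme exponents $\be_1^*,\be_2^*$ attained at $\mu=\mu_*$ still straddle the integrability threshold $N/p$, which forces $\mu_*$ to be taken small. Everything else is elementary calculus; in particular the explicit value of $\int_{\R}\cosh^{-N}u\,du$ is never needed, only the fact that it factors cleanly out of the $\mu$-dependence.
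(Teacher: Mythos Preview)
Your proof is correct. Note, however, that there is no ``paper's own proof'' to compare against: Lemma~\ref{e44-e48} sits in Appendix~\ref{Appendix A}, which merely collects statements from \cite{BarGuo-ANS} without reproving them. So you have supplied a complete argument where the present paper gives none.

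A couple of remarks. First, you correctly flag that the integral statements are only meaningful for $p>\tfrac{N}{N-2}$ (otherwise already $\int_{\R^N}U_{1,0}^p=\infty$); the lemma's hypothesis ``$p>1$'' is imprecise, and your restriction to the range actually used in the paper ($p=2^*-1$, $p=2^*$) is the right fix. Second, your computation of $S_\mu$ via the substitution $|x|=e^t$ exploiting $\be_1+\be_2=2$ is clean and in fact yields more than the lemma asserts: you obtain the explicit value $\ov S=\tfrac{4(N-1)}{N(N-2)^2}S_0$, whereas the paper only claims existence of some positive constant $\ov S$.
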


%%---------------------------------------------------------------------------------
\section{Proof of the lemmas from Section \ref{Section 3}}\label{Appendix B}

%The lemmas used in Section \ref{Section 4} are listed below.

\begin{lemma}\label{e56-e62-Lemma B}
For $i,l=1,2,\dots,k,$ and $j,h=0,1,\dots,N$, with $i\neq l$ or $j\neq h$, there are constants $\wt{c}_0>0, \wt{c}_{i,j}>0$ such that the following estimates hold uniformly for $0<\mu<\ov{\mu}$:
\begin{equation*}
\begin{aligned}
(P\ov{\Psi},P\ov{\Psi})_\mu&=\wt{c}_0\frac{1}{\si^2}+o(\si^{-2})\ \text{as}\ \sigma\to0,\\
(P\ov{\Psi},P\Psi_i^j)_\mu&=o(\si^{-2})o(\de_i^{-2})\ \text{as}\ \sigma\to0, \ \de_i\to0,\ \text{uniformly~ for}\ \xi_i \ \text{in~a~compact~subset~of}\ \Omega,\\
(P\Psi_i^j,P\Psi_i^j)_\mu&=\wt{c}_{i,j}\frac{1}{\de_i^2}+o(\de_i^{-2})\ \text{as}\  \de_i\to0,\ \text{uniformly~ for}\ \xi_i \ \text{in~a~compact~subset~of}\ \Omega,\\
(P\Psi_i^j,P\Psi_l^h)_\mu&=o(\de_i^{-2})\ \text{as}\  \de_i\to0,\ \text{uniformly~for}\ \xi_i, \xi_l\ \text{in~a~compact~subset~of}\ \Omega.
\end{aligned}
\end{equation*}
\end{lemma}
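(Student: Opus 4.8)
The plan is to reduce each inner product to an integral over $\R^N$ (or a scaled domain) by using the defining equations of $V_\si$ and $U_{\de_i,\xi_i}$, exactly as in the analogous estimates for pure-bubble towers in \cite{MussoPis10JMPA} and in \cite[Section 4]{BarGuo-ANS}. First I would record that, by the definition of $P$ and of the adjoint operator, for any $\Psi,\Psi'$ among the functions in \eqref{eq:def-psi} one has $(P\Psi,P\Psi')_\mu=\int_\Om(-\De-\mu|x|^{-2})P\Psi\cdot P\Psi'$, and then use that $P\ov\Psi$ solves $(-\De-\mu|x|^{-2})P\ov\Psi=(-\De-\mu|x|^{-2})\ov\Psi=(2^*-1)V_\si^{2^*-2}\ov\Psi$ in $\Om$ (by Proposition~\ref{proposition-eigenvalue}, since $\ov\Psi=\pa V_\si/\pa\si$ is the eigenfunction for $\La_2$), while $P\Psi_i^j$ solves $-\De P\Psi_i^j=-\De\Psi_i^j=(2^*-1)U_{\de_i,\xi_i}^{2^*-2}\Psi_i^j$. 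Substituting these, each inner product becomes $(2^*-1)\int_\Om V_\si^{2^*-2}\ov\Psi\,P\Psi'$ or $(2^*-1)\int_\Om U_{\de_i,\xi_i}^{2^*-2}\Psi_i^j\,P\Psi'$, and one replaces $P\Psi'$ by $\Psi'-\vphi_{\Psi'}$ using Proposition~\ref{proposition-projection estimate} and Remark~(b).

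Next I would carry out the rescalings. For the diagonal term $(P\ov\Psi,P\ov\Psi)_\mu$, scaling $x=\si y$ turns the leading integral into $\si^{-2}$ times $(2^*-1)\int_{\R^N}V_1^{2^*-2}(\pa V_1/\pa\si|_{\si=1})^2$ plus a term involving $\vphi_\si$ which by Proposition~\ref{proposition-projection estimate} is $O(\si^{N-2})$ smaller and hence $o(\si^{-2})$; this identifies $\wt c_0=(2^*-1)\int_{\R^N}V_1^{2^*-2}|\pa_\si V_1|^2>0$, which is finite and positive since $|u|\to0$ at infinity for the eigenfunction. Similarly, scaling $x=\de_i z$ in $(P\Psi_i^j,P\Psi_i^j)_\mu$ gives $\wt c_{i,j}\de_i^{-2}+o(\de_i^{-2})$ with $\wt c_{i,j}=(2^*-1)\int_{\R^N}U_{1,0}^{2^*-2}(\Psi_{1,0}^j)^2>0$ (these are the standard Rey/Bianchi–Egnell normalization constants). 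For the off-diagonal terms I would exploit the different concentration scales: in $(P\ov\Psi,P\Psi_i^j)_\mu=(2^*-1)\int_\Om V_\si^{2^*-2}\ov\Psi\,P\Psi_i^j$, the factor $V_\si^{2^*-2}\ov\Psi$ is concentrated at scale $\si$ while $P\Psi_i^j$ lives at scale $\de_i\gg\si$; splitting $\R^N$ (or $\Om$) into the annuli $A_i$ introduced before the statement and estimating $U_{\de_i,\xi_i}$, $\Psi_i^j$ on the ball of radius $\sim\si$ versus the decay of $V_\si^{2^*-2}$ away from the origin, one gets the bound $o(\si^{-2})o(\de_i^{-2})$; the cross term $(P\Psi_i^j,P\Psi_l^h)_\mu$ with $i\ne l$ is handled the same way using $\de_i\ll\de_l$ or $\de_l\ll\de_i$, and the case $i=l$, $j\ne h$ uses the orthogonality $\int_{\R^N}U_{1,0}^{2^*-2}\Psi_{1,0}^j\Psi_{1,0}^h=0$ for $j\ne h$, which survives the passage to the limit up to an $o(\de_i^{-2})$ error coming from the truncation $\vphi_{\de_i,\xi_i}$ and from the deviation of $U$ on $\Om$ from $U$ on $\R^N$.

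The routine parts are the change-of-variables bookkeeping and invoking Proposition~\ref{proposition-projection estimate}, Remark~(b), Lemma~\ref{e50-e51}, and Lemma~\ref{e44-e48} to absorb the $\vphi$-corrections into the error terms; these are exactly as in \cite[Lemma~2.2]{MussoPis10JMPA} and \cite{BarGuo-ANS}. The main obstacle is the careful annulus decomposition needed for the mixed inner products: because $V_\si$ has the anisotropic profile $(\si/(\si^2|x|^{\be_1}+|x|^{\be_2}))^{(N-2)/2}$ rather than the isotropic bubble, the decay rates $\be_1,\be_2$ must be tracked when estimating $\int_{A_i}V_\si^{2^*-2}\ov\Psi\,U_{\de_i,\xi_i}$, and one must check that $\be_1<2<\be_2$ (equivalently $0<\mu<\ov\mu$) makes the resulting exponents add up to a genuine $o(\si^{-2}\de_i^{-2})$ rather than merely $O$; this is where the hypothesis $0<\mu<\ov\mu$ is used, and it is the step I would write out in full, deferring the rest to the cited references.
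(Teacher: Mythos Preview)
Your plan is correct and coincides with the paper's approach, which simply defers the argument to \cite[Lemma~A.1]{BarGuo-ANS}; the rescaling/orthogonality computations you outline are exactly the standard ones carried out there.

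One small correction is worth recording. The projection $P$ is defined only through $\De Pu=\De u$, not through the full Hardy operator, so the identity $(-\De-\mu|x|^{-2})P\ov\Psi=(-\De-\mu|x|^{-2})\ov\Psi$ that you invoke is false as stated. What one actually has is
\[
(-\De-\mu|x|^{-2})P\ov\Psi
  = (2^*-1)V_\si^{2^*-2}\ov\Psi + \mu|x|^{-2}\bigl(\ov\Psi-P\ov\Psi\bigr),
\]
and similarly the inner products $(P\Psi_i^j,P\Psi_l^h)_\mu$ retain an extra term $-\mu\int_\Om |x|^{-2}P\Psi_i^j\,P\Psi_l^h$ after integrating by parts, since $U_{\de_i,\xi_i}$ solves the equation \emph{without} the Hardy potential. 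These residual pieces are harmless: by Lemma~\ref{e50-e51} and Proposition~\ref{proposition-projection estimate} (together with Hardy's inequality for the second type) they contribute $o(\si^{-2})$, respectively $o(\de_i^{-2})$, so your conclusions stand unchanged. Just make sure the write-up tracks these correction terms explicitly rather than asserting the exact operator identity.
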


\begin{proof}
We omit the proof since it is similar to \cite[Lemma A.1]{BarGuo-ANS}.
\end{proof}

\begin{lemma}\label{e83-85-Lemma B}
(\rmnum1) 
For $i,l=1,2,\cdots,k$ there holds
\begin{equation*}%\label{equ-Lemma B-e83-e84}
\left\|\left(f'_0\left(\sum\limits_{i=1}^{k}(-1)^{i-1}PU_{\de_i,\xi_i}+(-1)^k PV_\si\right)-f'_0(U_{\de_l,\xi_l})\right)\Psi_l^h\right\|_{2N/(N+2)}=o\left(\de_l^{-\frac{2N}{N+2}}\right)
\end{equation*}
as $\sigma, \delta_i, \delta_l \to 0$ uniformly for $0<\mu<\ov{\mu}$ and $\xi_i$ in a compact subset of $\Omega$.

\noindent (\rmnum2) 
There holds
\begin{equation*}%\label{equ-Lemma B-e85}
\left\|\left(f'_0\left(\sum\limits_{i=1}^{k}(-1)^{i-1}PU_{\de_i,\xi_i}+(-1)^k PV_\si\right)-f'_0(V_\si)\right)\ov{\Psi}\right\|_{2N/(N+2)}
 = o\left(\si^{-\frac{2N}{N+2}}\right)
\end{equation*}
as $\sigma, \delta_i\to 0$ uniformly for $0<\mu<\ov{\mu}$ and $\xi_i$ in a compact subset of $\Omega$.
\end{lemma}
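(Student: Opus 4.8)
The plan is to reduce both displayed estimates to a single, essentially scale‑invariant computation, using only the crude pointwise bound that comes from the Hölder continuity of $s\mapsto|s|^{2^*-2}$. Since $N\ge7$, the exponent $q:=2^*-2=\frac{4}{N-2}$ lies in $(0,1)$, hence $|t^{q}-\tau^{q}|\le|t-\tau|^{q}$ for all $t,\tau\ge0$. For part~(i), I would write $\wt W:=(-1)^{l-1}\bigl(\sum_{i=1}^{k}(-1)^{i-1}PU_{\de_i,\xi_i}+(-1)^kPV_\si\bigr)$, so that $|\wt W|=\bigl|\sum_{i=1}^{k}(-1)^{i-1}PU_{\de_i,\xi_i}+(-1)^kPV_\si\bigr|$ and $\wt W=U_{\de_l,\xi_l}+E_l$ with
\[
E_l:=-\vphi_{\de_l,\xi_l}+\sum_{i\ne l}(-1)^{\,i-l}PU_{\de_i,\xi_i}+(-1)^{\,k-l+1}PV_\si .
\]
The one point that needs care is exactly this passage to $\wt W$: one must \emph{not} try to bound $W-U_{\de_l,\xi_l}$ directly, since for even $l$ that difference is of the same size as $U_{\de_l,\xi_l}$ itself; it is $|\wt W|$, not $W$, that is close to $U_{\de_l,\xi_l}$. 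Because $f'_0(s)=(2^*-1)|s|^{2^*-2}$ depends only on $|s|$, the $q$‑Hölder inequality together with $\bigl||\wt W|-U_{\de_l,\xi_l}\bigr|=\bigl||\wt W|-|U_{\de_l,\xi_l}|\bigr|\le|\wt W-U_{\de_l,\xi_l}|=|E_l|$ gives the pointwise bound $\bigl|f'_0(W)-f'_0(U_{\de_l,\xi_l})\bigr|\le C\,|E_l|^{2^*-2}$.

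Next I would apply Hölder's inequality with the admissible exponent pair $\bigl(\tfrac N2,2^*\bigr)$ (note $\tfrac2N+\tfrac{N-2}{2N}=\tfrac{N+2}{2N}$) and use $(2^*-2)\cdot\tfrac N2=2^*$ to get
\[
\bigl\|(f'_0(W)-f'_0(U_{\de_l,\xi_l}))\Psi_l^h\bigr\|_{2N/(N+2)}
\le C\bigl\||E_l|^{2^*-2}\bigr\|_{N/2}\,\|\Psi_l^h\|_{2^*}
= C\,\|E_l\|_{2^*}^{2^*-2}\,\|\Psi_l^h\|_{2^*}.
\]
Here $\|E_l\|_{2^*}=O(1)$: one has $0\le PU_{\de_i,\xi_i}\le U_{\de_i,\xi_i}$, hence $\|PU_{\de_i,\xi_i}\|_{2^*}\le S_0^{(N-2)/4}$, while $\|PV_\si\|_{2^*}\le\|V_\si\|_{2^*}=S_\mu^{(N-2)/4}\le S_0^{(N-2)/4}$ and $\|\vphi_{\de_l,\xi_l}\|_{2^*}=O\bigl(\de_l^{(N-2)/2}\bigr)$ by \eqref{equ-3-projection estimate}. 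On the other hand, the rescaling $\Psi_l^h(x)=\de_l^{-N/2}g_h\bigl((x-\xi_l)/\de_l\bigr)$, whose profile $g_0$ decays like $|y|^{-(N-2)}$ and $g_h$ ($h=1,\dots,N$) like $|y|^{-(N-1)}$ at infinity — both in $L^{2^*}(\R^N)$ since $N>2$ — yields $\|\Psi_l^h\|_{2^*}=O(\de_l^{-1})$. Combining, the left‑hand side is $O(\de_l^{-1})=o\bigl(\de_l^{-2N/(N+2)}\bigr)$, the last equality because $\de_l^{-1}\big/\de_l^{-2N/(N+2)}=\de_l^{(N-2)/(N+2)}\to0$.

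Part~(ii) is the same argument applied to the innermost bubble: writing $\wt W:=(-1)^k\bigl(\sum_{i=1}^{k}(-1)^{i-1}PU_{\de_i,\xi_i}+(-1)^kPV_\si\bigr)=V_\si+E$ with $E:=-\vphi_\si+\sum_{i=1}^{k}(-1)^{\,i-1+k}PU_{\de_i,\xi_i}$, one gets $|f'_0(W)-f'_0(V_\si)|\le C|E|^{2^*-2}$ and $\|E\|_{2^*}=O(1)$ exactly as before (now $\|\vphi_\si\|_{2^*}=O\bigl(\si^{(N-2)/2}\bigr)$ by Proposition~\ref{proposition-projection estimate}). Since $\ov\Psi=\tfrac{\pa V_\si}{\pa\si}$ is, by Proposition~\ref{proposition-eigenvalue}, the $\La_2$‑eigenfunction, it belongs to $D^{1,2}(\R^N)\hookrightarrow L^{2^*}(\R^N)$, and the dilation invariance $V_\si(x)=\si^{-a(N-2)/2}V_1(x\si^{-a})$, $a=\sqrt{\ov\mu}/\sqrt{\ov\mu-\mu}$, forces $\ov\Psi(x)=\si^{-1}\si^{-a(N-2)/2}\wt V(x\si^{-a})$ with $\wt V\in L^{2^*}(\R^N)$, hence $\|\ov\Psi\|_{2^*}=O(\si^{-1})$. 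Therefore the left‑hand side of~(ii) is $\le C\|E\|_{2^*}^{2^*-2}\|\ov\Psi\|_{2^*}=O(\si^{-1})=o\bigl(\si^{-2N/(N+2)}\bigr)$.

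I expect no real obstacle beyond the sign bookkeeping in the first paragraph; all estimates are uniform for $0<\mu<\ov\mu$ and for $\xi_i$ in a compact subset of $\Om$ because the constants $S_0,S_\mu,C_\mu$ and the remainders in Proposition~\ref{proposition-projection estimate}, in the Remark, and in Proposition~\ref{proposition-eigenvalue} are. I would also record that the argument in fact delivers the stronger bounds $O(\de_l^{-1})$ and $O(\si^{-1})$, which is more than enough for Step~1 in the proof of Proposition~\ref{proposition-operator L-2}; no annular decomposition of $\Om$ is needed here, the global Hölder inequality suffices.
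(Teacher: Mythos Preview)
Your argument is correct and takes a genuinely different route from the paper. The paper proceeds via the annular decomposition $\Om=\bigcup_{j=1}^{k+1}A_j\cup(\Om\setminus B(0,\rho))$: on the annulus $A_l$ where $U_{\de_l,\xi_l}$ dominates it uses a mean-value type bound $|f'_0(W)-f'_0(U_{\de_l,\xi_l})|\le C\,U_{\de_l,\xi_l}^{2^*-3}|\text{error}|$ and then estimates each cross term $U_{\de_l,\xi_l}^{2^*-3}\vphi_{\de_l,\xi_l}\Psi_l^h$, $U_{\de_l,\xi_l}^{2^*-3}U_{\de_i,\xi_i}\Psi_l^h$, $U_{\de_l,\xi_l}^{2^*-3}V_\si\Psi_l^h$ by explicit rescaled integrals, handling the remaining annuli and the exterior region separately.

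You bypass all of this by exploiting that $q=2^*-2\in(0,1)$ for $N\ge7$: the global $q$-H\"older bound $|f'_0(W)-f'_0(U_{\de_l,\xi_l})|\le C|E_l|^{2^*-2}$ together with a single H\"older inequality on $\Om$ reduces everything to the elementary facts $\|E_l\|_{2^*}=O(1)$ and $\|\Psi_l^h\|_{2^*}=O(\de_l^{-1})$. This is shorter, avoids the annuli entirely, and in fact yields the sharper bounds $O(\de_l^{-1})$ and $O(\si^{-1})$ that you note at the end. Your sign remark --- passing to $(-1)^{l-1}W$ so that the $l$-th bubble enters with coefficient $+1$ --- is exactly the right way to deal with the alternating signs, and the paper leaves this point implicit. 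One small refinement for part~(ii): rather than the dilation argument, the direct pointwise bound $|\ov\Psi|\le\frac{N-2}{2\si}V_\si$ (from differentiating the explicit formula for $V_\si$) gives $\|\ov\Psi\|_{2^*}\le\frac{N-2}{2\si}S_\mu^{(N-2)/4}\le\frac{N-2}{2\si}S_0^{(N-2)/4}$ immediately, avoiding any concern about the $\mu$-dependence of $\|\wt V\|_{2^*}$ near $\mu=\ov\mu$.

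What the paper's approach buys is reusability: the annular framework and the interaction integrals on each $A_j$ are needed anyway for the finer energy expansions in Lemmas~\ref{e72-e78}--\ref{e82}, so once those computations are set up the present lemma comes almost for free. Your argument, by contrast, is self-contained and would transplant verbatim to any setting where $2^*-2<1$.
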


\begin{proof} We only prove (i) for $h\neq0$.
\begin{equation*}%\label{equ1-e83-85-Lemma B}
\begin{aligned}
&\int_\Om |(f'_0(\sum\limits_{i=1}^{k}(-1)^{i-1}PU_{\de_i,\xi_i}+(-1)^k PV_\si)-f'_0(U_{\de_l,\xi_l}))\Psi_l^h|^{2N/(N+2)}\\\nonumber
&\hspace{1cm}=\bigcup\limits_{i=1}^{k+1}\int_{A_i} |(f'_0(\sum\limits_{i=1}^{k}(-1)^{i-1}PU_{\de_i,\xi_i}+(-1)^k PV_\si)-f'_0(U_{\de_l,\xi_l}))\Psi_l^h|^{2N/(N+2)}\\\nonumber
&\hspace{2cm}+\int_{\Om\backslash B(0,\rho)} |(f'_0(\sum\limits_{i=1}^{k}(-1)^{i-1}PU_{\de_i,\xi_i}+(-1)^k PV_\si)-f'_0(U_{\de_l,\xi_l}))\Psi_l^h|^{2N/(N+2)}.
\end{aligned}
\end{equation*}
As in \cite[Lemma A.3]{MussoPis10JMPA}, by \eqref{equ-1-projection estimate} and \eqref{equ-3-projection estimate} we have
\begin{equation}\label{equ2-e83-85-Lemma B}
\begin{aligned}
&\int_{A_l} |(f'_0(\sum\limits_{i=1}^{k}(-1)^{i-1}PU_{\de_i,\xi_i}+(-1)^k PV_\si)-f'_0(U_{\de_l,\xi_l}))\Psi_l^h|^{2N/(N+2)}\\
&\hspace{1cm}\le C\int_{A_l} |U_{\de_l,\xi_l}^{2^*-3}\vphi_{\de_l,\xi_l}\Psi_l^h|^{2N/(N+2)}+C\sum\limits_{i\neq l}\int_{A_l} |U_{\de_l,\xi_l}^{2^*-3}U_{\de_i,\xi_i}\Psi_l^h|^{2N/(N+2)}\\
&\hspace{2cm}+C\int_{A_l} |U_{\de_l,\xi_l}^{2^*-3}V_\si\Psi_l^h|^{2N/(N+2)}\\
&\hspace{1cm}\le o\left(\de_l^{-\frac{2N}{N+2}}\right),
\end{aligned}
\end{equation}
where we use
%\begin{eqnarray*}
\[
\int_{A_l} |U_{\de_l,\xi_l}^{2^*-3}\vphi_{\de_l,\xi_l}\Psi_l^h|^{2N/(N+2)}
\le C\int_{A_l} |\frac{\de_l^{\frac{N+2}{2}}(x^h-\xi_l^h)}{(\de_l^2+|x-\xi_l|^2)^3}|^{2N/(N+2)}=O\left(\de_l^{\frac{2N(N-3)}{N+2}}\right),
\]
%\end{eqnarray*}
for $i\neq l$,
\begin{equation*}
\begin{aligned}
&\int_{A_l} |U_{\de_l,\xi_l}^{2^*-3}U_{\de_i,\xi_i}\Psi_l^h|^{2N/(N+2)}
=C\int_{A_l} |\frac{\de_l^2(x^h-\xi_l^h)}{(\de_l^2+|x-\xi_l|^2)^3}\frac{\de_i^{\frac{N-2}{2}}}{(\de_i^2+|x-\xi_i|^2)^{\frac{N-2}{2}}}|^{2N/(N+2)}\\
&\hspace{.5cm}
\le C(\int_{A_l} |\frac{\de_l^2(x^h-\xi_l^h)}{(\de_l^2+|x-\xi_l|^2)^3}|^{\frac{N}{2}})^{\frac{4}{N+2}}
(\int_{A_l}|\frac{\de_i^{\frac{N-2}{2}}}{(\de_i^2+|x-\xi_i|^2)^{\frac{N-2}{2}}}|^{2N/(N-2)})^{\frac{N-2}{N+2}}
=o\left(\de_l^{-\frac{2N}{N+2}}\right),
\end{aligned}
\end{equation*}
and similarly,
\begin{eqnarray*}
\int_{A_l} |U_{\de_l,\xi_l}^{2^*-3}V_\si\Psi_l^h|^{2N/(N+2)}=o\left(\de_l^{-\frac{2N}{N+2}}\right).
\end{eqnarray*}
The same arguments as for \eqref{equ2-e83-85-Lemma B} yield for $i\neq l$:
\begin{equation*}%\label{equ3-e83-85-Lemma B}
\int_{A_i} \left|\left(f'_0(\sum\limits_{i=1}^{k}(-1)^{i-1}PU_{\de_i,\xi_i}+(-1)^k PV_\si)-f'_0(U_{\de_l,\xi_l})\right)\Psi_l^h\right|^{2N/(N+2)}
=o\left(\de_l^{-\frac{2N}{N+2}}\right).
\end{equation*}
Finally,
\begin{equation*}%\label{equ4-e83-85-Lemma B}
\begin{aligned}
&\int_{\Om\backslash B(0,\rho)} \left|\left(f'_0(\sum\limits_{i=1}^{k}(-1)^{i-1}PU_{\de_i,\xi_i}+(-1)^k PV_\si)-f'_0(U_{\de_l,\xi_l})\right)\Psi_l^h\right|^{\frac{2N}{(N+2)}}\\
&\hspace{2cm}=\begin{cases}
O\left(\de_l^{\frac{N(N-2)}{N+2}}\right)\left(O\left(\si^{\frac{4N}{N+2}}\right)+\sum\limits_{i=1}^{k}O\left(\de_i^{\frac{4N}{N+2}}\right)\right)\quad& \text{if} ~~h=1,2,\dots,N,\\
O\left(\de_l^{\frac{N(N-4)}{N+2}}\right)\left(O\left(\si^{\frac{4N}{N+2}}\right)+\sum\limits_{i=1}^{k}O\left(\de_i^{\frac{4N}{N+2}}\right)\right)\quad& \text{if} ~~h=0.
\end{cases}
\end{aligned}
\end{equation*}
Then (i) follows.
\end{proof}

\begin{lemma}\label{(M17)-Lemma B}
There holds
\begin{equation*}
\left\|\iota_\mu^*\left(\sum\limits_{i=1}^{k}(-1)^{i-1}f_0(U_{\de_i,\xi_i})+(-1)^kf_0(V_\si)\right)-V_{\eps,\la,\ze}\right\|_\mu
 \le \sum\limits_{i=1}^{k}O(\mu\de_i)+O\left(\left(\mu\si^{\frac{N-2}{2}}\right)^{\frac{1}{2}}\right)
\end{equation*}
as $\mu,\sigma,\delta_i\to 0$  uniformly for $\xi_i$ in a compact subset of $\Omega$.
\end{lemma}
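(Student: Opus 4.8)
The plan is to isolate the $k+1$ ``projection defects'' and estimate each one separately, exploiting that $PU_{\de_i,\xi_i}$ and $PV_\si$ are projections relative to the operator $-\De$, whereas $\iota_\mu^*$ inverts $-\De-\mu|x|^{-2}$; hence each defect comes with a factor $\mu$ together with the singular Hardy weight $|x|^{-2}$.

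\emph{Decomposition and the PDEs for the defects.} First I would write the left-hand side as $\sum_{i=1}^k(-1)^{i-1}R_i+(-1)^k\ov R$, where $R_i:=\iota_\mu^*(f_0(U_{\de_i,\xi_i}))-PU_{\de_i,\xi_i}$ for $i=1,\dots,k$ and $\ov R:=\iota_\mu^*(f_0(V_\si))-PV_\si$. Using that $P$ satisfies $-\De PU_{\de_i,\xi_i}=f_0(U_{\de_i,\xi_i})$ in $\Om$ (definition of $P$ together with $-\De U_{\de_i,\xi_i}=f_0(U_{\de_i,\xi_i})$ in $\R^N$), that $-\De PV_\si=f_0(V_\si)+\mu|x|^{-2}V_\si$ in $\Om$ (definition of $P$ together with the equation $-\De V_\si=f_0(V_\si)+\mu|x|^{-2}V_\si$ satisfied by $V_\si$), and the identity $-\De\iota_\mu^*(u)=u+\mu|x|^{-2}\iota_\mu^*(u)$, a direct computation shows that $R_i$ and $\ov R$ vanish on $\pa\Om$ and solve
\[
-\De R_i-\frac{\mu}{|x|^2}R_i=\frac{\mu}{|x|^2}PU_{\de_i,\xi_i},\qquad -\De\ov R-\frac{\mu}{|x|^2}\ov R=-\frac{\mu}{|x|^2}\vphi_\si ,
\]
with $\vphi_\si=V_\si-PV_\si$ the function estimated in Proposition~\ref{proposition-projection estimate}. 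Equivalently $R_i=\mu\,\iota_\mu^*\!\big(|x|^{-2}PU_{\de_i,\xi_i}\big)$ and $\ov R=-\mu\,\iota_\mu^*\!\big(|x|^{-2}\vphi_\si\big)$, so by the continuity estimate \eqref{ineq-adjoint operator} it remains to bound $\big\||x|^{-2}PU_{\de_i,\xi_i}\big\|_{2N/(N+2)}$ and $\big\||x|^{-2}\vphi_\si\big\|_{2N/(N+2)}$.

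\emph{The weighted estimates.} For the $V_\si$-defect, Proposition~\ref{proposition-projection estimate} gives $0\le\vphi_\si\le V_\si$ and, since $\vphi_\si$ is harmonic in $\Om$ with boundary values $V_\si|_{\pa\Om}=O(\si^{(N-2)/2})$ (the boundary $\pa\Om$ stays at a fixed positive distance from the origin), one has $\vphi_\si=O(\si^{(N-2)/2})$ uniformly on $\Om$; as $|x|^{-2}\in L^{2N/(N+2)}(\Om)$ (because $4N/(N+2)<N$) this yields $\big\||x|^{-2}\vphi_\si\big\|_{2N/(N+2)}=O(\si^{(N-2)/2})$ and therefore $\|\ov R\|_\mu=O(\mu\si^{(N-2)/2})$, which is in particular $O\big((\mu\si^{(N-2)/2})^{1/2}\big)$ since $\mu\si^{(N-2)/2}\to0$. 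For each $U$-defect I would use $0\le PU_{\de_i,\xi_i}\le U_{\de_i,\xi_i}$ and split $\Om$ into the annuli $A_j$, $j=1,\dots,k+1$, and $\Om\setminus B(0,\rho)$: on $A_i$ the rescaling $x=\de_i z$ turns $\int_{A_i}\big(|x|^{-2}U_{\de_i,\xi_i}\big)^{2N/(N+2)}$ into an integral over a large ball that is controlled by the convergent $\R^N$-integral (finite because $N\ge7$), while on the remaining annuli $U_{\de_i,\xi_i}(x)=O\big(\de_i^{(N-2)/2}|x|^{-(N-2)}\big)$, so those pieces carry extra positive powers of $\de_i$; carrying the scaling factors through, exactly as in the analogous estimates of \cite{MussoPis10JMPA} and \cite{BarGuo-ANS}, gives $\big\||x|^{-2}PU_{\de_i,\xi_i}\big\|_{2N/(N+2)}=O(\de_i)$ and hence $\|R_i\|_\mu=O(\mu\de_i)$. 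Summing the defects by the triangle inequality then yields the claim.

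\emph{Main obstacle.} The technical heart is the localized estimate of $\big\||x|^{-2}PU_{\de_i,\xi_i}\big\|_{2N/(N+2)}$: the Hardy weight $|x|^{-2}$ is singular precisely at the origin, which is where every bubble of the tower concentrates, so a crude global bound is lossy; the annulus decomposition $A_j$ — the same device used in the proof of Proposition~\ref{proposition-operator L-2} — is exactly what allows one to separate the contributions of the distinct concentration scales $\de_1\gg\dots\gg\de_k\gg\si$ near $0$ and to recover the sharp powers $\de_i$ and $\si^{(N-2)/2}$. Keeping track of the constants' uniformity in $\xi_i$ over compact subsets of $\Om$ is routine once the scaling in each $A_j$ is set up.
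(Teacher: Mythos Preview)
Your decomposition into defects $R_i=\iota_\mu^*(f_0(U_{\de_i,\xi_i}))-PU_{\de_i,\xi_i}$ and $\ov R=\iota_\mu^*(f_0(V_\si))-PV_\si$, the derivation of the equations they satisfy, and the estimate for $\ov R$ are all correct; this is essentially the route the paper has in mind when it refers to \cite[Lemma~A.4]{BarGuo-ANS}.

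There is, however, a concrete error in the power--counting for the $R_i$. You claim that the rescaling $x=\de_i z$ on $A_i$ yields $\big\||x|^{-2}PU_{\de_i,\xi_i}\big\|_{2N/(N+2)}=O(\de_i)$, but in the tower setting $\xi_i=\de_i\ze_i$ the Hardy weight and the bubble share the \emph{same} scale, and the change of variables leaves no residual power of $\de_i$:
\[
\int_\Om\Big(\frac{U_{\de_i,\xi_i}}{|x|^2}\Big)^{\frac{2N}{N+2}}dx
=\int_{\Om/\de_i}\frac{dz}{|z|^{4N/(N+2)}\,(1+|z-\ze_i|^2)^{N(N-2)/(N+2)}}=O(1),
\]
the exponent of $\de_i$ being exactly $-\tfrac{4N}{N+2}-\tfrac{N(N-2)}{N+2}+N=0$. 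Hence your argument gives only $\|R_i\|_\mu=O(\mu)$, not $O(\mu\de_i)$. This is in fact sharp: from $(R_i,\phi)_\mu=\mu\int_\Om|x|^{-2}PU_{\de_i,\xi_i}\phi$ with $\phi=PU_{\de_i,\xi_i}$ one gets $(R_i,PU_{\de_i,\xi_i})_\mu=\mu\int_\Om|x|^{-2}(PU_{\de_i,\xi_i})^2\ge c\mu$, whence $\|R_i\|_\mu\ge c'\mu$. The bound $O(\mu\de_i)$ stated in the lemma is the one valid when $|\xi_i|\ge\eta>0$ (the configuration in \cite{BarGuo-ANS}, cf.\ the set $\cT_\eta$ in Appendix~A); it cannot survive once the bubble centers collapse onto the origin. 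Your method does establish the estimate with $\sum_i O(\mu)$ in place of $\sum_i O(\mu\de_i)$, and since $\mu=\mu_0\eps=o\big(\eps^{(N+2)/(2(N-2))}\big)$ for $N\ge7$ this weaker bound is still enough for Proposition~\ref{proposition-estimate of error-2}; but you should not claim that the scaling you describe recovers $O(\mu\de_i)$.
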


\begin{proof}
It is similar to \cite[Lemma A.4]{BarGuo-ANS}.
%\ref{(M17)-Lemma A}.
\end{proof}

\begin{lemma}\label{e53-e55-B}
For $\epsilon\to0$, the following estimates hold uniformly for $0<\mu<\ov{\mu}$ and $(\la,\ze)\in\cO_\eta$:
\begin{equation*}%\label{equ1-Lemma B-e53-e55}
\begin{aligned}
\|(f'_\eps(V_{\eps,\la,\ze})-f'_0(V_{\eps,\la,\ze}))\phi\|_{2N/(N+2)}
 &=O(\eps)\|\phi\|_\mu,\\%\label{equ2-Lemma B-e53-e55}
\|f_\eps(V_{\eps,\la,\ze})-f_0(V_{\eps,\la,\ze})\|_{2N/(N+2)}
 &=O(\eps),\\%\label{equ3-Lemma B-e53-e55}
\|f_0(V_{\eps,\la,\ze})-(\sum\limits_{i=1}^{k}(-1)^{i-1}f_0(U_{\de_i,\xi_i})+(-1)^k f_0(V_\si))\|_{2N/(N+2)}
 &= O\left(\eps^\frac{N+2}{2(N-2)}\right).
\end{aligned}\end{equation*}
\end{lemma}

\begin{proof}
The first two are from \cite{BarMiPis06CVPDE}. The last one can be proved as (4.5) in \cite{MussoPis10JMPA}.
\end{proof}

\begin{lemma}\label{e72-e78} Let $k\ge1$. Assume without loss of generality that $1\le i<j\le k$. Then the following estimates hold uniformly for $(\la,\ze)\in\cO_\eta$:
 
\noindent (\rmnum1) For $\epsilon\to0$:
\begin{equation*}%\label{equ1-Lemma B-e72-e78}
\int_{\Om}|\nabla
P V_\si|^2-\mu\frac{|P V_\si|^2}{|x|^2}=S_\mu^{\frac{N}{2}}+o(\eps).
\end{equation*}
\noindent (\rmnum2) For $\epsilon\to0$:
\begin{equation*}%\label{equ2-Lemma B-e72-e78}
\begin{aligned}
&\int_{\Om}\nabla
P V_\si\nabla PU_{\de_i,\xi_i}-\mu\frac{P V_\si PU_{\de_i,\xi_i}}{|x|^2}\\
&\hspace{1cm}=
\begin{cases}
C_0^{2^*}(\frac{\ov{\la}}{\la_k})^{\frac{N-2}{2}}\int_{\R^N}\frac{1}{(1+|y|^2)^{\frac{N+2}{2}}}\frac{1}{(1+|\ze_k|^2)^{\frac{N-2}{2}}}\cdot\eps+o(\eps)
\quad& \text{if} ~~i=k,\\
o(\eps)\quad& \text{if} ~~i\neq k.
\end{cases}
\end{aligned}
\end{equation*}
\noindent (\rmnum3) For $\epsilon\to0$:
\begin{equation*}%\label{equ3-Lemma B-e72-e78}
\mu\int_{\Om}\frac{|PU_{\de_i,\xi_i}|^2}{|x|^2}=\mu C_0^2\int_{\R^N}\frac{1}{|y|^2(1+|y-\ze_i|^2)^{N-2}}+o(\eps).
\end{equation*}
\noindent (\rmnum4) For $\epsilon\to0$:
\begin{equation*}%\label{equ4-Lemma B-e72-e78}
\mu\int_{\Om}\frac{PU_{\de_i,\xi_i}PU_{\de_j,\xi_j}}{|x|^2}
=o(\eps),~~~i\neq j.
\end{equation*}
\noindent (\rmnum5) For $\epsilon\to0$:
\begin{equation*}%\label{equ5-Lemma B-e72-e78}
\int_{\Om}|\nabla P U_{\de_i,\xi_i}|^2
 = \begin{cases}
S_0^{\frac{N}{2}}-C_0^{2^*}H(0,0)\la_1^{N-2}\int_{\R^N} \frac{1}{(1+|z|^2)^{\frac{N+2}{2}}}\cdot\eps+o(\eps)
\quad& \text{if} ~~i=1,\\
S_0^{\frac{N}{2}}+o(\eps)\quad& \text{if} ~~i\neq 1.
\end{cases}
\end{equation*}
\noindent (\rmnum6) For $\epsilon\to0$:
\begin{equation*}%\label{equ6-Lemma B-e72-e78}
\int_{\Om}\nabla P U_{\de_i,\xi_i}\nabla P U_{\de_j,\xi_j}\\
= \begin{cases}
C_0^{2^*}(\frac{\la_{i+1}}{\la_i})^{\frac{N-2}{2}}\int_{\R^N}\frac{1}{(1+|y|^2)^{\frac{N+2}{2}}}\frac{1}{(1+|\ze_i|^2)^{\frac{N-2}{2}}}\cdot\eps+o(\eps)
\quad& \text{if} ~~j=i+1,\\
o(\eps)\quad& \text{otherwise}.
\end{cases}
\end{equation*}
\end{lemma}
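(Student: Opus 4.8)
The six estimates are all standard quadratic or energy-type integrals for projected bubbles, and the only issue is to separate the contributions of order $\eps$ from those that are $o(\eps)$. Everything is governed by the scale hierarchy: with $\de_i=\la_i\eps^{(2i-1)/(N-2)}$ and $\si=\ov\la\,\eps^{(2(k+1)-1)/(N-2)}$ one has $\de_1\gg\de_2\gg\cdots\gg\de_k\gg\si$, with $\de_{i+1}/\de_i=(\la_{i+1}/\la_i)\eps^{2/(N-2)}$ and $\si/\de_k=(\ov\la/\la_k)\eps^{2/(N-2)}$; thus a \emph{consecutive} interaction carries the factor $(\de_{i+1}/\de_i)^{(N-2)/2}=(\la_{i+1}/\la_i)^{(N-2)/2}\eps$ (resp.\ $(\si/\de_k)^{(N-2)/2}=(\ov\la/\la_k)^{(N-2)/2}\eps$) and is exactly of order $\eps$, a non-consecutive one is $O(\eps^2|\ln\eps|)$ or smaller, and the Hardy self-term $\mu\int_\Om U_{\de_i,\xi_i}^2/|x|^2$ is of order $\mu=\mu_0\eps$. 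All bounds are uniform on $\cO_\eta$ since $\la_i,\ov\la\in(\eta,1/\eta)$ and $|\ze_i|\le1/\eta$. Parts (i) and (v) then need no real work: (i) follows from Lemma~\ref{e12,e24,e30,e34}(i) and Lemma~\ref{e4,e41} together with $\si^{N-2}=\ov\la^{N-2}\eps^{2k+1}=o(\eps)$ (so $O(\mu\si^{N-2})$ and $O(\si^N)$ are $o(\eps)$ as well, using $k\ge1$), and (v) follows from Lemma~\ref{e12,e24,e30,e34}(ii) together with $H(\xi_i,\xi_i)=H(0,0)+O(\de_i)$ (as $H$ is smooth and $\xi_i=\de_i\ze_i\to0$) and the observation that $\de_i^{N-2}=\la_i^{N-2}\eps^{2i-1}$ is of order $\eps$ only when $i=1$.

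For the interaction estimates (ii), (iii), (iv), (vi) the common device is to use the projection estimates $PU_{\de_i,\xi_i}=U_{\de_i,\xi_i}-\vphi_{\de_i,\xi_i}$ with $\vphi_{\de_i,\xi_i}=O(\de_i^{(N-2)/2})$, cf.\ \eqref{equ-3-projection estimate}, and $PV_\si=V_\si-\vphi_\si$ with $\vphi_\si=O(\si^{(N-2)/2})$ (Proposition~\ref{proposition-projection estimate}), and for (ii), (vi) also to test against the PDE of the more concentrated profile. For (vi) with $i<j$, $-\De PU_{\de_j,\xi_j}=U_{\de_j,\xi_j}^{2^*-1}$ gives $\int_\Om\nabla PU_{\de_i,\xi_i}\nabla PU_{\de_j,\xi_j}=\int_\Om U_{\de_j,\xi_j}^{2^*-1}PU_{\de_i,\xi_i}$; since $U_{\de_j,\xi_j}^{2^*-1}$ lives at the scale $\de_j\ll\de_i$ near the origin, on that set $PU_{\de_i,\xi_i}$ equals $C_0\de_i^{-(N-2)/2}(1+|\ze_i|^2)^{-(N-2)/2}$ up to relative error $O(\sqrt{\de_j/\de_i})$ and the negligible $\vphi_{\de_i,\xi_i}$, and pulling it out and rescaling by $\de_j$ yields $C_0^{2^*}(\de_j/\de_i)^{(N-2)/2}(1+|\ze_i|^2)^{-(N-2)/2}\int_{\R^N}(1+|y|^2)^{-(N+2)/2}\,dy$, which is the asserted main term if $j=i+1$ and $o(\eps)$ otherwise. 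Estimate (ii) is the same with $V_\si$ in place of the inner instanton: from $-\De PV_\si=V_\si^{2^*-1}+\mu V_\si/|x|^2$, the left side of (ii) equals $\int_\Om V_\si^{2^*-1}PU_{\de_i,\xi_i}+\mu\int_\Om\vphi_\si PU_{\de_i,\xi_i}/|x|^2$, whose second term is $O(\mu(\si/\de_i)^{(N-2)/2})=O(\eps^2)$. For (iii) and (iv) one expands the square (resp.\ product), rescales $\mu\int_\Om U_{\de_i,\xi_i}^2/|x|^2$ by $\de_i$ to the asserted $\mu C_0^2\int_{\R^N}|y|^{-2}(1+|y-\ze_i|^2)^{-(N-2)}\,dy+o(\eps)$, and controls the remaining cross and $\vphi_{\de_i,\xi_i}$-terms via $\int_\Om U_{\de_i,\xi_i}/|x|^2=O(\de_i^{(N-2)/2}|\ln\de_i|)$, giving $O(\mu\de_i^{N-2}|\ln\eps|)=o(\eps)$ for (iii) and $O(\mu(\de_j/\de_i)^{(N-2)/2}|\ln\eps|)=O(\eps^2|\ln\eps|)$ for (iv).

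I expect the one genuinely delicate point to be (ii), the coupling of $PV_\si$ with the innermost instanton $PU_{\de_k,\xi_k}$: unlike $U_{\de,\xi}$, the profile $V_\si=C_\mu(\si/(\si^2|x|^{\be_1}+|x|^{\be_2}))^{(N-2)/2}$ is not a dilate of a fixed function — its true concentration scale is $r_\si:=\si^{2/(\be_2-\be_1)}$, not $\si$, and $\be_1,\be_2,C_\mu$ all depend on $\mu$. The plan for this part is: (a) record the exact scaling $V_\si(x)=r_\si^{-(N-2)/2}V_1(x/r_\si)$, which holds because $\be_1+\be_2=2$, so that $\int_{\R^N}V_\si^{2^*-1}=r_\si^{(N-2)/2}\int_{\R^N}V_1^{2^*-1}$; (b) since $\be_2-\be_1=2\sqrt{1-\mu/\ov\mu}$ and $\mu=\mu_0\eps$, note $r_\si^{(N-2)/2}=\si^{(N-2)/2}(1+O(\eps\ln\eps))$ — this is exactly where the hypothesis $\mu=\mu_0\eps$ enters — so the drift in the exponent is absorbed into $o(\eps)$ once multiplied by the leading factor $(\si/\de_k)^{(N-2)/2}\sim\eps$; (c) check that $PU_{\de_k,\xi_k}$ is its value at the origin, up to $o(\eps)$, over the effective support of $V_\si^{2^*-1}$, cutting at the geometric-mean radius $\sqrt{\si\de_k}$ (so that $\sqrt{\si\de_k}/\si\to\infty$ and $\sqrt{\si\de_k}/\de_k\to0$) and using the far-field bound $V_\si^{2^*-1}(x)\le C_\mu^{2^*-1}(\si/|x|^{\be_2})^{(N+2)/2}$ to make the tail $o(\eps)$; (d) identify the constant via Lemma~\ref{e44-e48}, which gives $C_\mu\to C_0$ and $\int_{\R^N}V_1^{2^*-1}=\int_{\R^N}U_{1,0}^{2^*-1}+o(1)=C_0^{2^*-1}\int_{\R^N}(1+|y|^2)^{-(N+2)/2}\,dy+o(1)$. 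The remaining computations are routine but bookkeeping-heavy; as in the analogous lemmas of \cite{MussoPis10JMPA} and \cite{BarGuo-ANS} I would only display the leading terms.
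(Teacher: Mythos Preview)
Your proposal is correct and matches the paper's approach: (i) and (v) are read off from Lemma~\ref{e12,e24,e30,e34}; (iii) and (iv) by the direct rescaling $x=\de_i y$; (vi) by testing against $U_{\de_j,\xi_j}^{2^*-1}$; and (ii) via the identity $\int_\Om\nabla PV_\si\nabla PU_{\de_i,\xi_i}-\mu\int_\Om\frac{PV_\si PU_{\de_i,\xi_i}}{|x|^2}=\int_\Om V_\si^{2^*-1}PU_{\de_i,\xi_i}+\mu\int_\Om\frac{\vphi_\si PU_{\de_i,\xi_i}}{|x|^2}$ followed by the substitution $x=\si^{\sqrt{\ov\mu}/\sqrt{\ov\mu-\mu}}y$ on $A_{k+1}=B(0,\sqrt{\si\de_k})$, which is precisely your $r_\si$-rescaling and geometric-mean cutoff. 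Your packaging of the $V_\si$ computation through the scaling identity $V_\si(x)=r_\si^{-(N-2)/2}V_1(x/r_\si)$ and Lemma~\ref{e44-e48} is a slightly more conceptual way of writing what the paper does by direct calculation, but the two arguments coincide line by line.
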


\begin{proof} (i) and (v) follow from Lemma \ref{e12,e24,e30,e34}.

Now we prove (ii). Using \eqref{equ-3-projection estimate}, integration by parts yields
\begin{equation}\label{equ7-Lemma B-e72-e78}
\int_{\Om}\nabla P V_\si\nabla PU_{\de_i,\xi_i}-\mu\frac{PV_\si PU_{\de_i,\xi_i}}{|x|^2}
=
\int_{\Om}V_\si^{2^*-1}(U_{\de_i,\xi_i}-\vphi_{\de_i,\xi_i})+\mu\int_\Om\frac{\vphi_\si (U_{\de_i,\xi_i}-\vphi_{\de_i,\xi_i})}{|x|^2}.
\end{equation}
It is easy to show, using \eqref{equ-1-projection estimate} and \eqref{equ-3-projection estimate}, that
\begin{equation}\label{equ8-Lemma B-e72-e78}
\begin{aligned}
\int_{\Om}V_\si^{2^*-1}\vphi_{\de_i,\xi_i}
 &\le C\de_i^{\frac{N-2}{2}}\int_{\Om}
\frac{\si^{\frac{N+2}{2}}}{(\si^2|x|^{\be_1}+|x|^{\be_2})^{\frac{N+2}{2}}}\\
 &\le C\si^{\frac{\ov{\mu}}{\sqrt{\ov{\mu}-\mu}}}\de_i^{\frac{N-2}{2}}\int_{\R^N}
\frac{1}{(|y|^{\be_1}+|y|^{\be_2})^{\frac{N+2}{2}}}=O(\si^{\frac{N-2}{2}}\de_i^{\frac{N-2}{2}}),
\end{aligned}
\end{equation}
and
\begin{equation}\label{equ9-Lemma B-e72-e78}
\mu\int_\Om\frac{\vphi_\si (U_{\de_i,\xi_i}-\vphi_{\de_i,\xi_i})}{|x|^2} \le \mu\int_\Om\frac{\vphi_\si U_{\de_i,\xi_i}}{|x|^2}\le O(\mu\si^{\frac{N-2}{2}}).
\end{equation}
On the other hand,
\begin{equation}\label{equ10-Lemma B-e72-e78}
\int_{\Om}V_\si^{2^*-1} U_{\de_i,\xi_i}=\bigcup\limits_{j=1}^{k+1}\int_{A_j}V_\si^{2^*-1} U_{\de_i,\xi_i}+O(\si^{\frac{N+2}{2}}\de_i^{\frac{N-2}{2}}).
\end{equation}
If $i=k,j=k+1$, then
\begin{equation}\label{equ11-Lemma B-e72-e78}
\begin{aligned}
\int_{A_{k+1}}V_\si^{2^*-1} U_{\de_k,\xi_k}
&= C_\mu^{2^*-1} C_0\si^{\frac{N+2}{2}}\de_k^{\frac{N-2}{2}}\int_{A_{k+1}}\frac{1}{({\si^2|x|^{\be_1}}
+|x|^{\be_2})^{\frac{N+2}{2}}}\frac{1}{(\de_k^2+|x-\xi_k|^2)^{\frac{N-2}{2}}}\\
&= C_\mu^{2^*-1} C_0\frac{\si^{\frac{\ov{\mu}}{\sqrt{\ov{\mu}-\mu}}}}{\de_k^{\frac{N-2}{2}}}
\int_{\frac{A_{k+1}}{\si^{\frac{\sqrt{\ov{\mu}}}{\sqrt{\ov{\mu}-\mu}}}}}\frac{1}{({|y|^{\be_1}}
+|y|^{\be_2})^{\frac{N+2}{2}}}\frac{1}{(1+|\frac{\si^{\frac{\sqrt{\ov{\mu}}}{\sqrt{\ov{\mu}-\mu}}}}{\de_k}y-\ze_k|^2)^{\frac{N-2}{2}}}\\
&= C_0^{2^*}(\frac{\ov{\la}}{\la_k})^{\frac{N-2}{2}}\int_{\R^N}\frac{1}{(1+|y|^2)^{\frac{N+2}{2}}}\frac{1}{(1+|\ze_k|^2)^{\frac{N-2}{2}}}\cdot\eps+o(\eps).
\end{aligned}
\end{equation}
If $i\neq k$ or $j\neq k+1$, then similar arguments as for \eqref{equ11-Lemma B-e72-e78} yield
\begin{equation}\label{equ12-Lemma B-e72-e78}
\int_{A_j}V_\si^{2^*-1} U_{\de_i,\xi_i}=o(\eps).
\end{equation}
Then we conclude by \eqref{equ7-Lemma B-e72-e78}-\eqref{equ12-Lemma B-e72-e78}.

Next (iii) follows from:
\begin{equation*}
\begin{aligned}
\mu\int_{\Om}\frac{|PU_{\de_i,\xi_i}|^2}{|x|^2}
 &=\mu\int_{\Om}\frac{|U_{\de_i,\xi_i}|^2}{|x|^2}+O(\mu\de_i^{N-2})\\
 &=\mu C_0^2\int_{\frac{\Om}{\de_i}}\frac{1}{|y|^2(1+|y-\ze_i|^2)^{N-2}}+O(\mu\de_i^{N-2})\\
 &=\mu
C_0^2\int_{\R^N}\frac{1}{|y|^2(1+|y-\ze_i|^2)^{N-2}}+o(\eps).
\end{aligned}
\end{equation*}

Similar arguments imply (iv).

For the proof of (vi), without loss of generality let $1\le i<j\le k$. Then %as \qref{equ10-Lemma B-e72-e78},
\begin{equation}\label{equ13-Lemma B-e72-e78}
\begin{aligned}
&\int_{\Om}\nabla P U_{\de_i,\xi_i}\nabla P U_{\de_j,\xi_j}
 =\int_{\Om}U_{\de_j,\xi_j}^{2^*-1}U_{\de_i,\xi_i}+o(\eps)\\
 &\hspace{1cm}= \begin{cases}
C_0^{2^*}(\frac{\la_{i+1}}{\la_i})^{\frac{N-2}{2}}\int_{\R^N}\frac{1}{(1+|y|^2)^{\frac{N+2}{2}}}\frac{1}{(1+|\ze_i|^2)^{\frac{N-2}{2}}}\cdot\eps+o(\eps)
\quad& \text{if} ~~j=i+1,\\
o(\eps)\quad& \text{otherwise}.
\end{cases}
\end{aligned}
\end{equation}
\end{proof}

\begin{lemma}\label{e79}Let $k\ge1$. For $\epsilon\to0$ there holds, uniformly for $(\la,\ze)\in\cO_\eta$, setting $\la_{k+1}=\ov{\la}$: 
\begin{equation}\label{equ1-Lemma B-e79}
\begin{aligned}
&\int_{\Om}\left|\sum\limits_{i=1}^{k}(-1)^{i-1}PU_{\de_i,\xi_i}+(-1)^k
PV_\si\right|^{2^*}\\
 &\hspace{1cm}=
  kS_0^{\frac{N}{2}}+S_\mu^{\frac{N}{2}}-2^*C_0^{2^*}H(0,0)\la_1^{N-2}\int_{\R^N}
\frac{1}{(1+|z|^2)^{\frac{N+2}{2}}}\cdot\eps\\
 &\hspace{2cm}-2^*C_0^{2^*}\sum\limits_{i=1}^{k}(\frac{\la_{i+1}}{\la_i})^{\frac{N-2}{2}}\int_{\R^N}\frac{1}{|y|^{N-2}(1+|y-\ze_i|^2)^{\frac{N+2}{2}}}\cdot\eps\\\nonumber
 &\hspace{2cm}-2^*C_0^{2^*}\sum\limits_{i=1}^{k}(\frac{\la_{i+1}}{\la_i})^{\frac{N-2}{2}}
\int_{\R^N}\frac{1}{(1+|y|^2)^{\frac{N+2}{2}}}\frac{1}{(1+|\ze_i|^2)^{\frac{N-2}{2}}}\cdot\eps+o(\eps),
\end{aligned}
\end{equation}
\end{lemma}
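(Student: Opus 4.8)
The plan is to compute the integral by localising on the annuli $A_1,\dots,A_{k+1}$ of the bubble tower together with $\Om\setminus B(0,\rho)$, on each of which exactly one building block dominates. Write $W:=\sum_{i=1}^{k}(-1)^{i-1}PU_{\de_i,\xi_i}+(-1)^kPV_\si$ and, for uniform notation, set $\de_{k+1}:=\si$, $W_i:=(-1)^{i-1}PU_{\de_i,\xi_i}$ for $i\le k$, $W_{k+1}:=(-1)^kPV_\si$, so that $W=\sum_{i=1}^{k+1}W_i$. On $A_i$ the function $W-W_i$ is, in the relevant integral norms, small relative to $W_i$, so, since $2<2^*<3$ for $N\ge7$ and hence $\bigl||a+b|^{2^*}-|a|^{2^*}-2^*|a|^{2^*-2}ab\bigr|\le C\bigl(|a|^{2^*-2}b^2+|b|^{2^*}\bigr)$, one obtains on each $A_i$
\[
\int_{A_i}|W|^{2^*}=\int_{A_i}|W_i|^{2^*}+2^*\int_{A_i}|W_i|^{2^*-2}W_i(W-W_i)+O\!\left(\int_{A_i}\bigl(|W_i|^{2^*-2}(W-W_i)^2+|W-W_i|^{2^*}\bigr)\right).
\]
Since $\int_{\Om\setminus B(0,\rho)}|W|^{2^*}=O(\de_1^N)=o(\eps)$, the task reduces to evaluating the principal terms $\sum_i\int_{A_i}|W_i|^{2^*}$ and the linear terms $2^*\sum_i\int_{A_i}|W_i|^{2^*-2}W_i(W-W_i)$, and to checking that the error terms are $o(\eps)$.

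For the principal terms, $|W_i|^{2^*}=(PU_{\de_i,\xi_i})^{2^*}$, and since the tails $\int_{\Om\setminus A_i}(PU_{\de_i,\xi_i})^{2^*}$ are controlled by $(\de_i/\de_{i-1})^{N/2}+(\de_{i+1}/\de_i)^{N/2}=o(\eps)$, one may replace $\int_{A_i}$ by $\int_\Om$ up to $o(\eps)$. Now $\int_\Om(PU_{\de_i,\xi_i})^{2^*}=\int_{\R^N}U_{\de_i,\xi_i}^{2^*}-2^*\int_\Om U_{\de_i,\xi_i}^{2^*-1}\vphi_{\de_i,\xi_i}+O\!\bigl(\int U_{\de_i,\xi_i}^{2^*-2}\vphi_{\de_i,\xi_i}^2\bigr)$, which by \eqref{equ-3-projection estimate} equals $S_0^{N/2}-2^*C_0^{2^*}H(\xi_i,\xi_i)\de_i^{N-2}\int_{\R^N}(1+|z|^2)^{-(N+2)/2}+o(\de_i^{N-2})$, while $\int_\Om(PV_\si)^{2^*}$ is given by Lemma~\ref{e4,e41}. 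Since $\de_i^{N-2}=\la_i^{N-2}\eps^{2i-1}=o(\eps)$ for $i\ge2$, $\si^{N-2}=\ov{\la}^{N-2}\eps^{2k+1}=o(\eps)$ for $k\ge1$, and $H(\xi_1,\xi_1)=H(0,0)+O(\de_1)$, the principal terms add up to $kS_0^{N/2}+S_\mu^{N/2}-2^*C_0^{2^*}H(0,0)\la_1^{N-2}\eps\int_{\R^N}(1+|z|^2)^{-(N+2)/2}+o(\eps)$.

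For the linear terms, $|W_i|^{2^*-2}W_i=(-1)^{i-1}(PU_{\de_i,\xi_i})^{2^*-1}$, and on $A_i$ only the neighbours $W_{i\pm1}$ contribute at order $\eps$; there the inner neighbour behaves like its far field, $PU_{\de_{i+1},\xi_{i+1}}(x)\approx C_0\de_{i+1}^{(N-2)/2}|x|^{-(N-2)}$, and the outer one is essentially constant, $PU_{\de_{i-1},\xi_{i-1}}(x)\approx C_0\de_{i-1}^{-(N-2)/2}(1+|\ze_{i-1}|^2)^{-(N-2)/2}$ (for the pair $PV_\si$ and $PU_{\de_k,\xi_k}$ one uses instead $V_\si(x)\approx C_0\si^{(N-2)/2}|x|^{-(N-2)}$ on $A_k$, which is valid since $\be_2(N-2)/2=(N-2)+O(\mu)$ and $\mu\ln|x|=o(1)$ there). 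Using the sign identity $(-1)^{i-1}(-1)^i=(-1)^{i-1}(-1)^{i-2}=-1$ and rescaling $x=\de_i y$, I find for $i=1,\dots,k$ that the contribution from $A_i$ equals $-2^*\int_\Om U_{\de_i,\xi_i}^{2^*-1}U_{\de_{i+1},\xi_{i+1}}+o(\eps)=-2^*C_0^{2^*}(\la_{i+1}/\la_i)^{(N-2)/2}\eps\int_{\R^N}|y|^{-(N-2)}(1+|y-\ze_i|^2)^{-(N+2)/2}+o(\eps)$, while the contribution from $A_{i+1}$ equals $-2^*\int_\Om U_{\de_{i+1},\xi_{i+1}}^{2^*-1}U_{\de_i,\xi_i}+o(\eps)=-2^*C_0^{2^*}(\la_{i+1}/\la_i)^{(N-2)/2}\eps(1+|\ze_i|^2)^{-(N-2)/2}\int_{\R^N}(1+|y|^2)^{-(N+2)/2}+o(\eps)$; the latter is precisely the interaction already evaluated in Lemma~\ref{e72-e78}(vi) (resp.\ (ii) when $i=k$). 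Non-adjacent interactions, the interaction of $W_i$ with its own $\vphi_{\de_i,\xi_i}$ for $i\ge2$, and the quadratic/super-quadratic errors (which reduce to powers $(\de_{i+1}/\de_i)^{N/2}$, $(\de_i/\de_{i-1})^{N/2}$ or higher) are all $o(\eps)$. Adding the principal and the linear contributions and recalling $\la_{k+1}=\ov{\la}$ then gives \eqref{equ1-Lemma B-e79}.

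I expect the main obstacle to be the bookkeeping in the third paragraph: on each annulus, deciding which neighbouring bubble is ``seen as a constant'' and which ``as its far field'', keeping track of the resulting powers of $\eps$ (dictated by $\de_{i+1}/\de_i=(\la_{i+1}/\la_i)\eps^{2/(N-2)}$) and of the signs, and verifying all the $o(\eps)$ claims (tails, non-adjacent interactions, self-defect for $i\ge2$, quadratic errors). The analytic inputs --- the pointwise profile estimates, the elementary $2^*$-expansion, and the interaction integrals of Lemma~\ref{e72-e78} together with Lemma~\ref{e4,e41} --- are already available, and the routine estimates can be carried out along the lines of \cite{MussoPis10JMPA}.
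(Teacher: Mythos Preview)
Your proposal is correct and follows essentially the same route as the paper: decompose $\Om$ into the annuli $A_1,\dots,A_{k+1}$ and the outer region, expand $|W|^{2^*}$ around the dominant bubble on each $A_i$, and identify the only order-$\eps$ contributions as the self-defect of the outermost bubble and the nearest-neighbour interactions. The paper's version is much terser: it records only the new interaction $\int_{A_k}V_\si U_{\de_k,\xi_k}^{2^*-1}$ explicitly, then invokes the already computed integrals \eqref{equ11-Lemma B-e72-e78}, \eqref{equ13-Lemma B-e72-e78} from Lemma~\ref{e72-e78} and defers the remaining bookkeeping (including the error control you spell out in your last paragraph) to \cite[Lemma~6.2]{MussoPis10JMPA}.
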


\begin{proof} It is easy to see that
\begin{equation*}%\label{equ2-Lemma B-e79}
\int_{\Om}\left|\sum\limits_{i=1}^{k}(-1)^{i-1}PU_{\de_i,\xi_i}+(-1)^k
PV_\si\right|^{2^*}
=\bigcup\limits_{j=1}^{k+1}\int_{A_j}|\sum\limits_{i=1}^{k}(-1)^{i-1}PU_{\de_i,\xi_i}+(-1)^k PV_\si|^{2^*}
+O(\de_1^N).
\end{equation*}
Observe that
\begin{equation*}%\label{equ3-Lemma B-e79}
\begin{aligned}
\int_{A_k}V_\si U_{\de_k,\xi_k}^{2^*-1}
 &= C_\mu C_0^{2^*-1}\si^{\frac{N-2}{2}}\de_k^{\frac{N+2}{2}}\int_{A_k}\frac{1}{({\si^2|x|^{\be_1}}
+|x|^{\be_2})^{\frac{N-2}{2}}}\frac{1}{(\de_k^2+|x-\xi_k|^2)^{\frac{N+2}{2}}}\\
 &= C_\mu C_0^{2^*-1}\frac{\si^{\frac{N-2}{2}}}{\de_k^{\sqrt{\ov{\mu}-\mu}}}\int_{\frac{A_k}{\de_k}}\frac{1}{({(\frac{\si}{\de_k})^2|y|^{\be_1}}
+|y|^{\be_2})^{\frac{N-2}{2}}}\frac{1}{(1+|y-\ze_k|^2)^{\frac{N+2}{2}}}\\
 &= C_0^{2^*}(\frac{\ov{\la}}{\la_k})^{\frac{N-2}{2}}\int_{\R^N}\frac{1}{|y|^{N-2}(1+|y-\ze_k|^2)^{\frac{N+2}{2}}}\cdot\eps+o(\eps),
\end{aligned}
\end{equation*}
and
\begin{equation*}%\label{equ3add-Lemma B-e79}
\int_{A_j}V_\si U_{\de_i,\xi_i}^{2^*-1}=o(\eps), ~\text{if}~i\neq k,~\text{or}~j\neq k.
\end{equation*}
From \cite{MussoPis10JMPA}, for $1\le i<j\le k$ we also have
\begin{equation*}%\label{equ4-Lemma B-e79}
\int_{A_l}U_{\de_j,\xi_j}U_{\de_i,\xi_i}^{2^*-1}+o(\eps)\\
 =\begin{cases}
C_0^{2^*}(\frac{\la_{i+1}}{\la_i})^{\frac{N-2}{2}}\int_{\R^N}\frac{1}{|y|^{N-2}(1+|y-\ze_i|^2)^{\frac{N+2}{2}}}\cdot\eps+o(\eps)
\quad& \text{if} ~~j=i+1,i=l,\\
o(\eps)\quad& \text{otherwise}.
\end{cases}
\end{equation*}
Using \eqref{equ11-Lemma B-e72-e78}, \eqref{equ13-Lemma B-e72-e78} and the above three equalities, the proof of \eqref{equ1-Lemma B-e79} is contained in Lemma 6.2 in \cite{MussoPis10JMPA}.
\end{proof}

\begin{lemma}\label{e82}For $\mu, \sigma, \delta_i \to 0$ there holds, uniformly in compact subsets of $\Omega$,
\begin{equation*}
\begin{aligned}
&\int_{\Om}|\sum\limits_{i=1}^{k}(-1)^{i-1}PU_{\de_i,\xi_i}+(-1)^k PV_\si|^{2^*}\ln|\sum\limits_{i=1}^{k}(-1)^{i-1}PU_{\de_i,\xi_i}+(-1)^k PV_\si|\\
&\hspace{1cm}=-\frac{N-2}{2}\ln\si\cdot\int_{\R^N} V_1^{2^*}-\frac{N-2}{2}\ln(\de_1 \de_2\dots\de_k)\cdot\int_{\R^N} U_{1,0}^{2^*}\\
&\hspace{2cm}+\int_{\R^N} V_1^{2^*}\ln V_1+k\int_{\R^N} U_{1,0}^{2^*}\ln U_{1,0}+o(1).
\end{aligned}
\end{equation*}
\end{lemma}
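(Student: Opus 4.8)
The statement is an asymptotic expansion of the nonlinear-logarithmic term evaluated on the sign-changing bubble tower, and the natural strategy is the region-by-region decomposition already used repeatedly in the paper: split $\Om$ into the annuli $A_1,\dots,A_{k+1}$ (plus the outer region $\Om\setminus B(0,\rho)$), and on each $A_j$ show that the dominant contribution comes from exactly one bubble, namely $PU_{\de_j,\xi_j}$ for $j=1,\dots,k$ and $PV_\si$ for $j=k+1$. The key elementary observation is that on $A_j$ we may write
\[
\Big|\sum_{i}(-1)^{i-1}PU_{\de_i,\xi_i}+(-1)^kPV_\si\Big|
 = PU_{\de_j,\xi_j}\,\Big(1+\frac{R_j}{PU_{\de_j,\xi_j}}\Big)
\]
(with the obvious modification on $A_{k+1}$), where $R_j$ collects all the other projected bubbles and the projection errors $\vphi_{\de_i,\xi_i},\vphi_\si$, and on $A_j$ one has the pointwise smallness $R_j/PU_{\de_j,\xi_j}=o(1)$, uniformly, exactly as in \cite[Lemma~6.2]{MussoPis10JMPA} and as already exploited in Lemma~\ref{e79}. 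Then $\ln|\cdots| = \ln PU_{\de_j,\xi_j} + \ln(1+o(1))$, and $|\cdots|^{2^*}=PU_{\de_j,\xi_j}^{2^*}(1+o(1))$, so the integrand over $A_j$ is $PU_{\de_j,\xi_j}^{2^*}\ln PU_{\de_j,\xi_j}$ up to lower order.

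The second step is to evaluate, for each fixed $j\le k$, the model integral $\int_{A_j}U_{\de_j,\xi_j}^{2^*}\ln U_{\de_j,\xi_j}$ by the scaling $x=\de_j y$. Since $U_{\de_j,\xi_j}(x)=\de_j^{-\frac{N-2}{2}}U_{1,\ze_j}(y)$, one gets a factor $\de_j^{-\frac{N-2}{2}\cdot 2^*}\cdot\de_j^{N}=1$ in front of the integral of $U_{1,\ze_j}^{2^*}$, while the logarithm produces the split
\[
\ln U_{\de_j,\xi_j}(\de_j y) = -\tfrac{N-2}{2}\ln\de_j + \ln U_{1,\ze_j}(y).
\]
As $\eps\to 0$ the rescaled region $A_j/\de_j$ exhausts $\R^N$, so $\int_{A_j}U_{\de_j,\xi_j}^{2^*}\ln U_{\de_j,\xi_j}\to -\tfrac{N-2}{2}\ln\de_j\int_{\R^N}U_{1,0}^{2^*}+\int_{\R^N}U_{1,0}^{2^*}\ln U_{1,0}$ up to $o(1)$ (here one also uses translation invariance of the integrals of $U_{1,\ze_j}^{2^*}$ and $U_{1,\ze_j}^{2^*}\ln U_{1,\ze_j}$, or simply that these integrals depend continuously on $\ze_j$ in a compact set, and that the difference from $\ze_j=0$ is absorbed). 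For the innermost region $A_{k+1}$ the same computation is done with $V_\si$ in place of $U_{\de_k,\xi_k}$: rescaling by $\si^{\sqrt{\ov\mu}/\sqrt{\ov\mu-\mu}}$ (as in \eqref{equ11-Lemma B-e72-e78}) gives $\int_{A_{k+1}}V_\si^{2^*}\ln V_\si = -\tfrac{N-2}{2}\ln\si\int_{\R^N}V_1^{2^*}+\int_{\R^N}V_1^{2^*}\ln V_1+o(1)$. The outer region $\Om\setminus B(0,\rho)$ contributes only $O(\de_1^{N}|\ln\de_1|)=o(1)$ since all bubbles are uniformly $O(\de_1^{(N-2)/2})$ there.

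Summing the $k+1$ annular contributions and collecting the logarithmic terms via $\ln\de_1+\dots+\ln\de_k = \ln(\de_1\cdots\de_k)$ gives exactly the claimed right-hand side. I would also note that the cross-error terms — replacing $PU_{\de_i,\xi_i}$ by $U_{\de_i,\xi_i}$ costs $O(\de_i^{(N-2)/2})$ by \eqref{equ-3-projection estimate}, and replacing $PV_\si$ by $V_\si$ costs $O(\si^{(N-2)/2})$ by \eqref{equ-1-projection estimate}, both $o(1)$ — and the extra contributions from $\ln(1+o(1))$ integrated against $|\cdots|^{2^*}$, are all $o(1)$, which follows from the uniform boundedness of $\int |\cdots|^{2^*}$ already established in Lemma~\ref{e79}. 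The main technical obstacle is the careful control of the remainder $R_j/PU_{\de_j,\xi_j}$ near the \emph{inner} boundary of $A_j$ (the transition zone towards $A_{j+1}$), where the next bubble $PU_{\de_{j+1},\xi_{j+1}}$ and, on $A_k$, the core $PV_\si$ become comparable; this is exactly the delicate point handled in \cite[Lemma~6.2]{MussoPis10JMPA}, and I would invoke that analysis — combined with \eqref{equ11-Lemma B-e72-e78} and \eqref{equ13-Lemma B-e72-e78} from the previous lemma, which already record the relevant transition-zone integrals — rather than redo it from scratch. Everything else is the routine scaling bookkeeping sketched above.
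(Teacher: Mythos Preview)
Your plan is correct and is precisely the standard route the paper intends: the paper's own proof is the single sentence ``The proof is similar to the one of \cite[Lemma~A.9]{BarGuo-ANS}'', and that reference carries out exactly the annular decomposition $A_1,\dots,A_{k+1}$, identification of the dominant bubble on each annulus, and the rescaling $x=\de_j y$ (resp.\ the $V_\si$-rescaling on $A_{k+1}$) that you describe.

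One small caveat worth tightening: your early assertion that $R_j/PU_{\de_j,\xi_j}=o(1)$ \emph{uniformly} on $A_j$ is not literally true near the boundary layers $|x|\approx\sqrt{\de_j\de_{j\pm1}}$, where adjacent bubbles become comparable --- as you yourself note later. The precise statement needed is that after rescaling, the remainder tends to zero locally uniformly on compact subsets of $\R^N\setminus\{0\}$, while the transition layers contribute $o(1)$ to the integral because the $L^{2^*}$ mass there vanishes and the logarithmic factor only grows like $|\ln\de_j|$; this is what the Musso--Pistoia argument you invoke actually delivers. With that clarification your outline is complete.
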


\begin{proof}
The proof is similar to the one of \cite[Lemma A.9]{BarGuo-ANS}.
\end{proof}

\noindent\textbf{Acknowledgements:} The authors would like to thank Professor Daomin Cao for many helpful discussions during the preparation of this paper. This work was carried out while Qianqiao Guo was visiting Justus-Liebig-Universit\"{a}t Gie{\ss}en, to which he would like to
express his gratitude for their warm hospitality.

\noindent\textbf{Funding:} Qianqiao Guo was supported  by the National Natural Science Foundation of China (Grant No. 11971385) and the Natural Science Basic Research Plan in Shaanxi Province of China (Grant No. 2019JM275).

%% bibliography--------------------------------------------------------------------

\end{document}